\documentclass[a4paper,12pt,reqno]{amsart}

  \headheight0.6in
  \headsep22pt
  \textheight23cm
  \topmargin-1.7cm
  \oddsidemargin 0.5cm
  \evensidemargin0.5cm
  \textwidth15.3cm

\usepackage{amscd,amssymb,amsmath,amsfonts,amsthm, mathrsfs,xspace}
\usepackage{graphicx}
\usepackage{verbatim,enumerate,paralist}
\usepackage{textcomp}
\usepackage[pagebackref,colorlinks]{hyperref}
\hypersetup{%
  urlcolor=blue,linkcolor=blue,citecolor=blue
}
\usepackage{pdfpages}


\DeclareMathAlphabet{\mathbbe}{U}{bbold}{m}{n}



\renewcommand{\epsilon}{\varepsilon}
\renewcommand{\phi}{\varphi}

\usepackage[T1]{fontenc}
\usepackage[all,2cell,poly,knot,tips]{xy}
\UseAllTwocells


\setlength{\marginparwidth}{2cm}

\newcounter{Definitioncount}
\setcounter{Definitioncount}{0}

\newtheorem{theorem}{Theorem}[section] 
\newtheorem{lemma}[theorem]{Lemma}
\newtheorem{proposition}[theorem]{Proposition}
\newtheorem{corollary}[theorem]{Corollary}


\theoremstyle{definition}
\newtheorem{remark}[theorem]{Remark}
\newtheorem{example}[theorem]{Example}
\newtheorem{assumption}[theorem]{Assumption}

\newtheoremstyle{fact}{\bigskipamount}{\medskipamount}{\upshape}{}{\itshape}{. }{ }{Fact}
\theoremstyle{fact}

\newtheoremstyle{genquest}{\bigskipamount}{\medskipamount}{\upshape}{}{\itshape}{. }{ }{General Question}
\theoremstyle{genquest}

\newtheoremstyle{step}{2\bigskipamount}{\medskipamount}{\upshape}{}{\itshape}{. }{ }{\underline{Step~\thestep}}
\theoremstyle{step}

\renewcommand{\thestep}{\arabic{step}}

\newcommand{\dd}{\colon}
\newcommand{\lra}{\longrightarrow}
\newcommand{\lla}{\longleftarrow}
\newcommand{\ra}{\rightarrow}

\newcommand{\Lra}{\Longrightarrow}
\newcommand{\Lla}{\Longleftarrow}
\newcommand{\ldual}[1]{\mathord{{\let\nolimits\relax\sideset{^\wedge}{}{#1}}}}
\newcommand{\laction}[2]{\mathord{{\let\nolimits\relax\sideset{^{#1}}{}{#2}}}}
\newcommand{\conj}[2]{\mathord{{\let\nolimits\relax\sideset{^{#1}}{}{#2}}}}

\DeclareMathOperator{\Lan}{Lan}
\DeclareMathOperator{\Ran}{Ran}

\def\CA{{\mathscr A}}
\def\CB{{\mathscr B}}
\def\CC{{\mathscr C}}
\def\CD{{\mathscr D}}
\def\CE{{\mathscr E}}

\def\CI{{\mathscr I}}

\def\CK{{\mathscr K}}

\def\CM{{\mathscr M}}

\def\CP{{\mathscr P}}

\def\CR{{\mathscr R}}
\def\CS{{\mathscr S}}

\def\CV{{\mathscr V}}

\def\CX{{\mathscr X}}

\begin{document}

\title{Combinatorial categorical equivalences of Dold-Kan type}

\author{Stephen Lack}
\address{Department of Mathematics, Macquarie University, NSW 2109, Australia}
\email{steve.lack@mq.edu.au}

\author{Ross Street}
\address{Department of Mathematics, Macquarie University, NSW 2109, Australia}
\email{ross.street@mq.edu.au}

\thanks{Both authors gratefully acknowledge the support of the Australian Research Council Discovery Grant DP130101969; Lack acknowledges with equal gratitude the support of an Australian Research Council Future Fellowship}

\date{\today}
\maketitle

\begin{abstract}
We prove a class of equivalences of additive functor categories that are relevant to 
enumerative combinatorics, representation theory, and homotopy theory. 
Let $\CX$ denote an additive category with finite direct sums and split idempotents. 
The class includes (a) the Dold-Puppe-Kan theorem that simplicial objects in $\CX$ 
are equivalent to chain complexes in $\CX$; (b) the observation of Church, Ellenberg 
and Farb \cite{CEF2012} that $\CX$-valued species are equivalent 
to $\CX$-valued functors from the category of finite sets and injective partial functions; 
(c) a result T. Pirashvili calls of ``Dold-Kan type''; and so on. 
When $\CX$ is semi-abelian, we prove the adjunction that was an equivalence 
is now at least monadic, in the spirit of a theorem of D. Bourn.  
\end{abstract}\tableofcontents

\noindent {\small{\emph{2010 Mathematics Subject Classification:} 18E05; 20C30; 18A32; 18A25; 18G35; 18G30}
\\
{\small{\emph{Key words and phrases:} additive category; Dold-Kan theorem; partial map; semi-abelian category; comonadic; Joyal species.}}

\section{Introduction}

The intention of this paper is to prove a class of equivalences of categories that 
seem of interest in enumerative combinatorics as per \cite{Species},
representation theory as per \cite{CEF2012}, 
and homotopy theory as per \cite{Berger1996}.
More specifically, for a suitable category $\CP$, we construct a category $\CD$
with zero morphisms (that is, $\CD$ has homs enriched in the category
$1/\mathrm{Set}$ of pointed sets) and an equivalence of categories of the form
\begin{equation}\label{basicequ}
[\CP,\CX] \simeq [\CD,\CX]_{\mathrm{pt}} \ .
\end{equation}
On the left-hand side we have the usual category of functors from
$\CP$ into any additive category $\CX$ which has finite direct sums
and split idempotents. 
On the right-hand side we have the category of functors which preserve
the zero morphisms.
We prove \eqref{basicequ} as Theorem~\ref{settingmainthm} below, under 
the Assumptions~\ref{ass1}-\ref{ass6} listed in Section~\ref{setting}.

One example has $\CP = \Delta_{\bot,\top}$, the category whose objects are 
finite ordinals with first and last element, and whose morphisms are functions
preserving order and first and last elements. Then $\CD$ is the category with
non-zero and non-identity morphisms
\begin{equation*}
0\stackrel{\partial}\lla 1 \stackrel{\partial}\lla 2\stackrel{\partial}\lla \dots 
\end{equation*}
such that $\partial \circ \partial = 0$.
Since there is an isomorphism of categories 
\begin{equation}\label{interval}
\Delta_{\bot,\top} \cong \Delta_+^{\mathrm{op}} \ ,
\end{equation} 
where the right-hand side is the algebraist's simplicial category (finite ordinals
and all order-preserving functions),
our result \eqref{basicequ} reproduces the Dold-Puppe-Kan Theorem
\cite{Dold, DoldPuppe, Kan}.  Our arguments equally apply to the full subcategory $\Delta_{\bot\neq\top}$ of $\Delta_{\bot,\top}$ obtained by removing the object $1$; functors $\Delta_{\bot\neq \top}\ra \CX$ are the traditional simplicial objects in $\CX$. 

Cubical sets also provide an example of our setting; see Example~\ref{Ex3+}.
We conclude that cubical abelian groups are equivalent to semi-simplicial
abelian groups. 

For a category $\CA$ equipped with a suitable factorization system 
$(\CE,\CM)$ \cite{FreydKelly},
write $\mathrm{Par}\CA$ (strictly the notation should also show the dependence 
on $\CM$) for the category with the same objects as $\CA$ and
with $\CM$-partial maps as morphisms. We identify $\CE$ with the subcategory of
$\CA$ having the same objects but only the morphisms in $\CE$.
Assume each object of $\CA$ has only finitely many $\CM$-subobjects.
Let $\CX$ be any additive category with finite direct sums and split idempotents.
Our main equivalence \eqref{basicequ} includes as a special case an equivalence of categories
\begin{equation}\label{parequiv}
[\mathrm{Par}\CA,\CX] \simeq [\CE,\CX] \ .
\end{equation}
(Here $\CD$ is obtained from $\CE$ by freely adjoining zero morphisms.)

Let $\mathfrak{S}$ be the groupoid of finite sets and bijective functions.
Let $\mathrm{FI}\sharp$ denote the category of finite sets and injective partial functions.
Let $\mathrm{Mod}^R$ denote the category of left modules over the ring $R$.
Our original motivation was to understand and generalize the classification theorem for 
$\mathrm{FI}\sharp$-modules appearing as Theorem 2.24 of \cite{CEF2012},
which provides an equivalence 
\begin{equation*}
[\mathrm{FI}\sharp, \mathrm{Mod}^R] \simeq [\mathfrak{S}, \mathrm{Mod}^R]
\end{equation*}
between the category of functors $\mathrm{FI}\sharp \ra \mathrm{Mod}^R$
and the category of functors $\mathfrak{S} \ra \mathrm{Mod}^R$. 
This is the special case of \eqref{parequiv} above in which $\CA$ is
the category $\mathrm{FI}$ of finite sets and injective functions, 
and $\CM$ consists of all the morphisms. 
For us, this result has provided a new viewpoint on representations of the 
symmetric groups, and a new viewpoint on Joyal species \cite{Species, AnalFunct}.

In order to consider stability properties of representations of the symmetric groups, 
the authors of \cite{CEF2012} also consider $\mathrm{FI}$-modules: 
that is, $R$-module-valued functors from the category $\mathrm{FI}$. 
Each $\mathrm{FI}\sharp$-module
clearly has an underlying $\mathrm{FI}$-module, so their Theorem 2.24
shows how symmetric group representations become $\mathrm{FI}$-modules.
Many $\mathrm{FI}$-modules naturally extend to $\mathrm{FI}\sharp$-modules;
in particular, projective $\mathrm{FI}$-modules do.
Inspired by results of Putman \cite{Put12}, Church et al.\cite{CEFN} have generalized their
stability results to $\mathrm{FI}$-modules over Noetherian rings and make it clear
how the stability phenomenon is about the $\mathrm{FI}$-module being finitely generated.      
One application they give is a structural version of the Murnaghan Theorem
\cite{Mur38, Mur55}, a problem which has its combinatorial aspects \cite{Sta99}. 
We are reminded of the way in which Mackey functors
\cite{Lindner1976, PanSt} give extra freedom to representation theory.

Another instance of an equivalence of the form \eqref{parequiv} is when
$\CA$ is the category of finite sets with its usual (surjective, injective)-factorization
system. Then $\mathrm{Par}\CA$ is equivalent to the 
category of pointed finite sets,
which is equivalent to Graeme Segal's category $\Gamma$ \cite{Segal}. 
After completing this work, we were alerted to Teimuraz Pirashvili's interesting 
paper \cite{Pirash00} which gives this finite sets example, 
makes the connection with Dold-Puppe-Kan, and discusses stable homotopy of 
$\Gamma$-spaces.  

We are very grateful to Aurelien Djament and Ricardo Andrade for 
alerting us to the
existence of Jolanta S\l omi\'nska's \cite{Slom} and Randall Helmstutler's \cite{Helm08}. 
These papers cover many of our examples and have many similar ideas.
In particular, a three-fold factorization is part of the setting of \cite{Slom};
while the additivity of $\CX$ is replaced in \cite{Helm08} (now revised and
published as \cite{Helm14}) by a semi-stable homotopy
model structure, and the equivalence by a Quillen equivalence.   
We believe our approach makes efficient use of established categorical techniques and is complementary to these other papers.
Of course, we should have discovered those papers ourselves.

Christine Vespa also emailed us suggesting connections with her joint paper \cite{HPV}.
We have added Example~\ref{ParExx}~(d) of type \eqref{parequiv} about the PROP for 
monoids as a step in that direction.

As a further example of type \eqref{parequiv}, we have Example~\ref{FLI(q)} about the
category of representations of the general linear groupoid $\mathrm{GL}(q)$ over the field
$\mathbb{F}_q$. 
A recent development in this area is the amazing equivalence 
$$[\mathrm{GL}(q), \mathrm{Mod}^R] \simeq [\mathrm{FL}(q), \mathrm{Mod}^R]$$
proved by Kuhn \cite{Kuhn14}; 
here $\mathrm{FL}(q)$ is the category of finite vector spaces over $\mathbb{F}_q$
and all linear functions, provided $q$ is invertible in $R$.
Another interesting development is the work of Putman-Sam \cite{PutSam} 
on stabilization results for representations of the finite linear groupoid $\mathrm{GL}(q)$,
for example, in place of $\mathfrak{S}$. Also see Sam-Snowden \cite{SamSno}.   

Consider the basic equivalence \eqref{basicequ}. It is really about Cauchy (or Morita)
equivalence of the free additive category on the ordinary category $\CP$
and the free additive category on the category with zero morphisms $\CD$. 
By the general theory of Cauchy completeness (see \cite{ECC} for example),
to have \eqref{basicequ} for all Cauchy complete additive categories $\CX$, it
suffices to have it when $\CX$ is the category of abelian groups. 
Cauchy completeness amounts to existence of absolute limits (see \cite{ACEC})
and, for additive categories,
amounts to the existence of finite direct sums and split idempotents. 

The strategy of proof of our main equivalence \eqref{basicequ} is as follows.
In Appendix~\ref{gr} we provide general criteria for deducing that a $\CV$-functor
between $\CV$-functor $\CV$-categories is an equivalence given that some other
$\CV$-functor between ``smaller'' $\CV$-functor $\CV$-categories is known to
be an equivalence. 
In Section~\ref{setting} we describe the setting for our main result and then,
in Section~\ref{basex} show that the partial map, Dold-Puppe-Kan, and 
cube examples satisfy our assumptions. 
We show in Section~\ref{ker} that the setting of Section~\ref{setting} 
leads to an adjunction between functor categories of a particularly combinatorial kind.
Section~\ref{reducto} reduces the proof that the adjunction is an equivalence to a
particular case, then Section~\ref{tpmc} treats that case.
Section~\ref{exx} provides some more examples.
In the spirit of Bourn's version \cite{Bourn07} of Dold-Kan, 
Section~\ref{semiabelian} shows, under an extra assumption on our basic setting, 
that if $\CX$ is semiabelian then, instead of
the equivalence \eqref{basicequ}, we have a monadic functor.
The extra assumption is related to Stanley \cite{Sta97} Theorem 3.9.2
as used in Hartl et al. \cite{HPV} Theorem 3.5.
We feel the extra assumption should not be necessary and that there should be a
proof for the semi-abelian case closer to that of the additive case but so far have 
been unable to find it.

\section{The setting}\label{setting}

Let $\CP$ be a category with a subcategory $\CM$ containing all the isomorphisms.

Assume we have a functor $(-)^*\dd \CM^{\mathrm{op}} \ra \CP$
which is the identity on objects and is 
such that $m^*\circ m=1$ for all $m\in \CM$.
In particular, the morphisms in $\CM$ are split monomorphisms (coretractions)
in $\CP$. 

We write $\mathrm{Sub}A$ for the (partially) ordered set of isomorphism 
classes of morphisms
$m\dd U\ra A$ into $A$ in $\CM$. We will use the term {\em subobject} 
rather than ``$\CM$-subobject'' for these elements. 
The order on $\mathrm{Sub}A$ is the usual one. 
Abusing notation for this order, we simply write $U\preceq V$ when there 
exists an $f \dd U\ra V$ such that $m=n\circ f$, where $m\dd U\ra A$ and $n \dd V\ra A$ in $\CM$. 
There can be confusion when $U=V$ as objects, so we assure the reader that we will take care.
A subobject of $A$ is {\em proper} when it is represented by a non-invertible $m\dd U\ra A$
in $\CM$; we write $U\prec A$ or $U\prec_m A$. 

Define $\CR$ to be the class of morphisms $r\in \CP$ with the property that, if $r = m\circ x \circ n^*$ with $m, n \in \CM$, then $m$ and $n$ are invertible. 

Define $\CD$ to be the category with zero morphisms (that is, $1/\mathrm{Set}$-enriched
category) obtained from $\CR$ by adjoining zero morphisms.
Composition in $\CD$ of morphisms in $\CR$ is as in $\CP$ if the result is itself
in $\CR$, but zero otherwise. 

Define $\CS$ to be the class of morphisms in $\CP$ of the form $r\circ m^*$ with
$m\in \CM$ and $r\in \CR$. 

\begin{assumption}\label{ass1}
Every morphism $f\in \CP$ factors as $f=n\circ r\circ m^*$ for $m,n\in \CM$ and $r\in \CR$, and these $m$, $n$, and $r$ are unique up to isomorphism.
\end{assumption}
\begin{assumption}\label{ass2}
If $r, r^{\prime}\in \CR$ are composable then $r^{\prime}\circ r \in \CS$. 
\end{assumption}
\begin{assumption}\label{ass3}
If $r,m^*\circ r \in \CR$ and $m\in \CM$ then $m$ is invertible.
\end{assumption}
\begin{assumption}\label{ass4}
The class $\CM \circ \CM^*$ of morphisms of the form $m\circ n^*$ with $m,n\in \CM$ is closed under composition.
\end{assumption}
\begin{assumption}\label{ass5}
For all objects $A\in \CP$, the set $\mathrm{Sub}A$ is finite. 
\end{assumption}
\begin{assumption}\label{ass6}
For all objects $A\in \CP$, the relation $R_A$ on the set $\mathrm{Sub}A$, defined
by $m R_A n$ if and only if $m^*\circ n\in \CM$, is contained in an antisymmetric
transitive relation on $\mathrm{Sub}A$. 
\end{assumption}

\begin{proposition}\label{SsinR}
If $t\circ s = m\circ r$ with $s,t\in \CS$, $r\in \CR$ and $m\in \CM$ then both $s$ and
$t$ are in $\CR$. 
\end{proposition}
\begin{proof}
First note that, by uniqueness in Assumption~\ref{ass1}, if $x\circ n^*=m'\circ r'$
in obvious notation, then $n^*$ is invertible. 
Now, with $s,t,m,r$ as in the Proposition, we can put $s= r_1\circ m_1^*$ and $t = r_2\circ m_2^*$. Then  $r_2\circ m_2^*\circ r_1\circ m_1^*= t\circ s = m\circ r$. So $m_1^*$ is
invertible and we conclude that $s\in \CR$. Using Assumption~\ref{ass1}, we have
$m_2^*\circ s = m_3\circ r_3\circ n_1^*$. Then $r_2\circ m_3\circ r_3\circ n_1^* = m\circ r$
implies $n_1^*$ invertible. So we may suppose $m_2^*\circ s = m_3\circ r_4$. 
Then $(m_2\circ m_3)^*\circ s = m_3^*\circ m_2^*\circ s = m_3^*\circ m_3\circ r_4 = r_4$.
By Assumption~\ref{ass3}, $m_2\circ m_3$ is invertible. It follows that $m_2$ is
invertible, so $t\in \CR$.
\end{proof}

Recall that the limit of a diagram consisting of a family of morphisms into
a fixed object $A$ is called a {\em wide pullback}; the morphisms in the
limit cone are called {\em projections}. 
The dual is {\em wide pushout}.  
The following result helps in checking examples.

\begin{proposition}\label{factorize}
Assume that the pullback of each morphism in $\CM$ along any morphism in $\CR$
exists and is in $\CM$.  
Assume wide pullbacks of families of morphisms in $\CM$ exist, 
have projections in $\CM$, and become wide pushouts under $m \mapsto m^*$.
Then Assumptions~\ref{ass1} and \ref{ass2} hold. 
\end{proposition}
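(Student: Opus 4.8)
The plan is to establish Assumption~\ref{ass1} first, and then to deduce Assumption~\ref{ass2} from it together with the hypothesis on pulling back $\CM$-morphisms along $\CR$-morphisms. Throughout I would use that each $m\in\CM$ is a split monomorphism with retraction $m^*$, so that $\CM$-morphisms are monic and each $m^*$ is a split epimorphism, and that $(mn)^*=n^*m^*$ while $\alpha^*=\alpha^{-1}$ for invertible $\alpha$.

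For existence, fix $f\dd A\ra B$. First I would construct the image: consider the family (nonempty, as it contains $1_B$) of all subobjects $n_i\dd V_i\ra B$ in $\CM$ through which $f$ factors, say $f=n_i\circ h_i$, and form their wide pullback $n\dd V\ra B$, whose projections, and hence $n$ itself, lie in $\CM$ by hypothesis. Since $n_i\circ h_i=f$ the $h_i$ form a cone, so $f=n\circ g$ factors through the smallest such subobject, and minimality forces $g\dd A\ra V$ to be $\CM$-surjective (if $g=n'\circ x$ with $n'\in\CM$ then $n'$ is invertible). Dually I would construct the coimage, noting $f=g''\circ m^*$ iff $f=f\circ m\circ m^*$. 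I would take the wide pullback $m\dd U\ra A$ of all $m_i\dd U_i\ra A$ in $\CM$ with $f=f\circ m_i\circ m_i^*$; by hypothesis this becomes a wide pushout under $(-)^*$. The maps $g\circ m_i\dd U_i\ra V$ satisfy $(g\circ m_i)\circ m_i^*=g$, hence form a cocone and induce $r\dd U\ra V$ with $g=r\circ m^*$, so $f=n\circ r\circ m^*$. That $r\in\CR$ then follows from the two minimality properties: any $r=m'\circ x\circ l^*$ makes $g$ factor through $m'$ (so $m'$ is invertible by $\CM$-surjectivity of $g$) and makes $f$ factor through $(m\circ l)^*$ (so $l$ is invertible by minimality of $U$).

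For uniqueness I would characterise the outer legs intrinsically. Given any $f=n\circ r\circ m^*$ with $r\in\CR$, the composite $r\circ m^*$ is again $\CM$-surjective (if $r\circ m^*=m'\circ y$ then $r=r\circ m^*\circ m=m'\circ y\circ m$). Hence $n$ represents the image: if $f=n'\circ h$ with $n'\dd V'\ra B$ in $\CM$, intersecting via the pullback $V\wedge V'$ of $\CM$-morphisms and using its universal property shows $f$ factors through $V\wedge V'$, and $\CM$-surjectivity forces $V\preceq V'$. Dually, $m$ represents the coimage: forming $W=U\wedge U'$ and feeding the relevant cocone into the $(-)^*$-pushout lets me write $r=(r\circ q)\circ q^*$ with $q\dd W\ra U$ in $\CM$, so $r\in\CR$ forces $q$ invertible and $U\preceq U'$. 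These pin down $n$ and $m$ up to canonical isomorphisms $\beta$ and $\alpha$; then comparing $n_1\circ\beta\circ r\circ\alpha^{-1}\circ m_1^*=n_1\circ r_1\circ m_1^*$ and cancelling the monic $n_1$ on the left and the split epic $m_1^*$ on the right gives $r_1=\beta\circ r\circ\alpha^{-1}$. I expect this coimage step --- extracting $r=(r\circ q)\circ q^*$ from the wide-pushout behaviour of $(-)^*$ --- to be the main obstacle, since it is exactly where the contravariant hypothesis on $(-)^*$ must be applied correctly.

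Finally, for Assumption~\ref{ass2}, let $r,r'\in\CR$ be composable; I would show $r'\circ r$ is $\CM$-surjective. Suppose $r'\circ r=m'\circ y$ with $m'\in\CM$. By hypothesis the pullback of $m'$ along $r'\in\CR$ exists with its projection $m''$ into the domain of $r'$ lying in $\CM$; applying the pullback universal property to $r$ and $y$ exhibits $r=m''\circ u$, so $r\in\CR$ forces $m''$ invertible, whence $r'$ factors through $m'$ and $r'\in\CR$ forces $m'$ invertible. Thus $r'\circ r$ has full image, so in its factorization $r'\circ r=n\circ\rho\circ m^*$ from Assumption~\ref{ass1} the leg $n$ is invertible. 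Since $\CR$ is closed under composition with isomorphisms, $n\circ\rho\in\CR$ and $r'\circ r=(n\circ\rho)\circ m^*\in\CS$, as required.
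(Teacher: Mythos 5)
Your proposal is correct and follows essentially the same route as the paper: the image and coimage are constructed as wide pullbacks of $\CM$-subobjects, $r$ is induced via the $(-)^*$-pushout, and Assumption~\ref{ass2} is obtained by pulling back the image along the second $\CR$-morphism. You additionally spell out the verification that $r\in\CR$ and the uniqueness argument, which the paper dismisses as ``clear'' and ``familiar''; these details check out.
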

\begin{proof}
For Assumption~\ref{ass1}, take $f \dd A \ra B$ in $\CP$.
Let $n\dd Y\ra B$ be the wide pullback of all those morphisms 
$V\ra B$ in $\CM$ through which $f$ factors.
Then $n\in \CM$ and there exists a unique $f_1$ with $f = n\circ f_1$.
Let $m\dd X\ra A$ be the wide pullback of those $p\dd U\ra A$ in $\CM$ 
such that the morphism $f_1$ factors through $p^*$.
Then $m\in \CM$ and $f_1 = r \circ m^*$ for a unique $r$.
Clearly $r\in \CR$ and we have uniqueness by a familiar argument.

Assumption~\ref{ass2} is proved as follows. Take $t\circ s$ with $s, t\in \CR$. 
We already know $t\circ s = m \circ u$ with $m\in \CM$ and $u\in \CS$.
Form the pullback $(n,a)$ of $(m,t)$. Then there exists $b$ with $a\circ b=u$,
$n\circ b = s$. From the latter, $n$ is invertible. 
So $t = m\circ a \circ n^{-1}$, and this implies $m$ invertible.
So $t\circ s \in \CS$.  
\end{proof}

While Assumption~\ref{ass6} is used only in the proof of
Theorem~\ref{specialthm}, it holds in all our examples because they satisfy the hypothesis of the following proposition.

\begin{proposition}\label{bbP}
Suppose each hom-set of $\CP$ is equipped with a reflexive, transitive, antisymmetric relation $\leq$ respected by composition on either side;
thus we have a locally posetal 2-category $\mathbb{P}$ with underlying category $\CP$. Suppose further that, for all $m\in \CM$, $m^*$ is right adjoint
to $m$ with identity unit. Then Assumption~\ref{ass6} is
redundant. Dually, the same is true if instead each $m^*$ is left
adjoint to $m$ with identity counit. 
\end{proposition}
\begin{proof} For $m$ and $n$ in $\CM$ with codomain $A$, 
let $m\trianglelefteq n$ mean that 
there exists $\ell$ in $\CM$ with $m\ell \leq n$ in $\mathbb{P}$.
We claim $\trianglelefteq$ is transitive, antisymmetric, and contains the relation $R_A$ of Assumption~\ref{ass6}. 

Suppose $m_1\trianglelefteq m_2\trianglelefteq m_3$. Then there are
$m$ and $n$ in $\CM$ with $m_1m\leq m_2$ and $m_2n\leq m_3$. So
$m_1mn\leq m_2n\leq m_3$ yielding transitivity.

Suppose $m_1\trianglelefteq m_2\trianglelefteq m_1$. Then there are $m$ and $n$ in $\CM$ with $m_1m\leq m_2$ and $m_2n\leq m_1$, and so $m_1mn\leq m_1$; but $m_1$ is fully faithful, so $mn\leq 1$. This gives a descending chain
$$\dots \leq (mn)^3\leq (mn)^2\leq (mn) \leq 1 $$
in $\CP(A,A)$. All terms of the chain are in the finite set $\mathrm{Sub}(A)$ so they cannot be distinct. So $(mn)^a = (mn)^b$ for some natural numbers $a > b$.
Since $mn$ is a monomorphism, $(mn)^{a-b} = 1$; so $m$ is a retraction
and a monomorphism, hence invertible. Thus $m_1=m_2$, proving $\trianglelefteq$ antisymmetric.

If $m R_A n$ then there is an $\ell \in \CM$ with $\ell = m^*\circ n$.
By the adjointness $m\dashv m^*$, it follows that $m\ell \leq n$.
So $m\trianglelefteq n$. This proves $R_A$ is contained in $\trianglelefteq$. 
\end{proof}

For each $u\dd A\ra B$ in $\CP$, there exist $m_u\in \CM$ and $s_u\in \CS$ as in the triangle
\begin{eqnarray}\label{fact_u}
\begin{aligned}
\xymatrix{
A \ar[rd]_{s_u}\ar[rr]^{u}   && B  \\
& S_u \ar[ru]_{m_u}  &
} 
\end{aligned}
\end{eqnarray}  
by Assumption~\ref{ass1}.

\begin{proposition}\label{esssub}
Suppose $u\dd A\ra B$ and $v\dd B\ra C$ in $\CP$. 
\begin{enumerate}
\item[(a)] If $s_{vu}\in \CR$ then $s_u\in \CR$. 
\item[(b)] If $s_{vu}\in \CR$ and $u\in \CS$ then $u, s_v\in \CR$.
\end{enumerate}
\end{proposition}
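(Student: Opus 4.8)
The plan is to work throughout with the essentially-unique factorization $f = n\circ r\circ m^*$ of Assumption~\ref{ass1}, and to exploit the observation that, for the triangle~\eqref{fact_u}, one has $s_f\in\CR$ precisely when the $\CM^*$-part $m$ of $f$ is invertible. Indeed, writing $s_f = r\circ m^*$ and testing the condition defining $\CR$ on the presentation $s_f = 1\circ r\circ m^*$ forces $m$ invertible; conversely, if $m$ is invertible then $m^*=m^{-1}$ and $s_f = r\circ m^{-1}$ lies in $\CR$, using that $\CR$ is closed under postcomposition with isomorphisms (if $r\circ i = a\circ x\circ b^*$ then $r = a\circ x\circ (i\circ b)^*$ by $i^{-1}=i^*$ and functoriality of $(-)^*$, whence $a$ and $i\circ b$, and so $a,b$, are invertible). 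I will apply this characterization freely to $u$, $v$ and $vu$, which share the relevant (co)domains.

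For part (a), write $u = n_1\circ r_1\circ m_1^*$ with $n_1,m_1\in\CM$ and $r_1\in\CR$, so that $s_u = r_1\circ m_1^*$ and it suffices to show $m_1$ is invertible. Factoring $v\circ n_1\circ r_1 = n''\circ r''\circ (m'')^*$ by Assumption~\ref{ass1} and using functoriality of $(-)^*$ gives
\begin{equation*}
vu \;=\; v\circ n_1\circ r_1\circ m_1^* \;=\; n''\circ r''\circ (m_1\circ m'')^*,
\end{equation*}
a legitimate $\CM\circ\CR\circ\CM^*$ factorization of $vu$. By hypothesis $s_{vu}\in\CR$, so the $\CM^*$-part of $vu$ is invertible; the uniqueness clause of Assumption~\ref{ass1} then identifies the subobject $[m_1\circ m'']$ with that invertible $\CM^*$-part, forcing $m_1\circ m''$ invertible. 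Since $m_1\in\CM$ is a split monomorphism while $m_1\circ m''$ is invertible, $m_1$ is simultaneously a monomorphism and a split epimorphism, hence invertible, and so $s_u\in\CR$.

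For part (b), the extra hypothesis $u\in\CS$ says the $\CM$-part $n_1$ of $u$ is invertible, while part (a) has just shown its $\CM^*$-part $m_1$ is invertible; thus $u = n_1\circ r_1\circ m_1^*$ is, up to isomorphisms, $r_1$, so $u\in\CR$. To obtain $s_v\in\CR$ I would invoke Proposition~\ref{ass6}. Writing $v = m_v\circ s_v$ as in~\eqref{fact_u}, we have $vu = m_v\circ(s_v\circ u)$; factoring $s_v\circ u = m''\circ r''\circ (p'')^*$ yields the factorization $vu = (m_v\circ m'')\circ r''\circ (p'')^*$. Comparing its $\CM^*$-part with that of the given factorization $vu = m_{vu}\circ s_{vu}$, whose $\CM^*$-part is invertible because $s_{vu}\in\CR$, uniqueness forces $p''$ invertible. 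Hence $s_v\circ u = m''\circ(r''\circ (p'')^*)$ exhibits $s_v\circ u$ in the form $m\circ r$ with $m''\in\CM$ and $r''\circ (p'')^*\in\CR$. As $s_v\in\CS$ and $u\in\CR\subseteq\CS$, Proposition~\ref{ass6} applies and delivers $s_v\in\CR$ (re-confirming $u\in\CR$), completing the argument.

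The one genuinely delicate point is the bookkeeping around the uniqueness clause of Assumption~\ref{ass1}: in both parts the crux is that postcomposition by an arbitrary morphism can only enlarge the domain-side subobject recorded by the $\CM^*$-part, so triviality of that subobject for $vu$ propagates to $u$ in (a) and to $s_v\circ u$ in (b). Making this rigorous requires checking that each factorization-comparison genuinely matches $\CM^*$-parts rather than merely whole composites, and that the closure of $\CR$ under isomorphisms is available; these are the steps I would spell out in full.
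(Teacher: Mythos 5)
Your proof is correct, but it inverts the logical structure of the paper's argument and proves part (a) by a genuinely different mechanism. The paper proves (b) first and in one line: writing $v\circ u=m_v\circ s_v\circ u=m_v\circ m_{s_vu}\circ s_{s_vu}$ and invoking uniqueness to take $s_{vu}=s_{s_vu}$, it gets $s_v\circ u=m_{s_vu}\circ s_{vu}$, which is exactly the shape $t\circ s=m\circ r$ required by Proposition~\ref{ass6}; that proposition then delivers both $u\in\CR$ and $s_v\in\CR$ simultaneously. Part (a) is then a formal corollary of (b) applied to the decomposition $vu=(v\circ m_u)\circ s_u$, using that $s_u\in\CS$ automatically. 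You instead prove (a) directly by comparing the factorization $vu=n''\circ r''\circ(m_1\circ m'')^*$ against the canonical one and matching $\CM^*$-parts; this requires the two auxiliary facts you correctly supply, namely that $s_f\in\CR$ exactly when the $\CM^*$-part of $f$ is invertible, and that $\CR$ is stable under precomposition with isomorphisms (your computation $r=a\circ x\circ(i\circ b)^*$ is the right one, despite the ``postcomposition'' slip), together with the split-mono-plus-split-epi argument for $m_1$. Your part (b) then recovers $u\in\CR$ from (a) plus $u\in\CS$, and still needs Proposition~\ref{ass6} for $s_v\in\CR$, reached via the same equation $s_v\circ u=m''\circ(r''\circ(p'')^*)$ that the paper obtains more cheaply from uniqueness. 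The paper's route is shorter and keeps all the subobject bookkeeping inside Proposition~\ref{ass6}; yours makes the ``postcomposition can only enlarge the domain-side subobject'' intuition explicit and gives (a) independently of (b), at the cost of the extra lemmas you flag at the end --- which you would indeed need to write out, but all of which hold.
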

\begin{proof}
We prove (b) first. We have $v\circ u = m_v\circ s_v \circ u = m_v \circ m_{s_vu} \circ s_{s_vu}$. By uniqueness of factorization, we may take $s_{vu}= s_{s_vu}$.
So $s_v\circ u = m_{s_vu} \circ s_{vu}$. By Proposition~\ref{SsinR}, if $s_{vu}\in \CR$ and $u\in \CS$ then $s_v, u\in \CR$. 
Now we prove (a). We have $v\circ u = v \circ m_u\circ s_u$.
By (b), $s_{vm_us_u} = s_{vu} \in \CR$ implies $s_u\in \CR$.  \end{proof}

\section{Basic examples}\label{basex}

\begin{example}\label{Ex1+}
Take a category $\CA$ with a factorization system $(\CE,\CM)$ in the sense of \cite{FreydKelly}. 
Assume that the pullback of any morphism in $\CM$ along any morphism exists and is in $\CM$. 
Assume every morphism in $\CM$ is a monomorphism and every object
of $\CA$ has only finitely many $\CM$-subobjects. 
A span $f = (X\stackrel{f_0}\lla U \stackrel{f_1}\lra Y)$ is called a {\em partial map}
$f \dd X\lra Y$ when $f_0$ is in $\CM$.  There is a morphism from
such an $f$ to $g=(X\stackrel{g_0}\lla V \stackrel{g_1}\lra Y)$ just
when there is a (necessarily unique) morphism $U\to V$ making the two
evident triangles commute. 
Let $\CP= \mathrm{Par}\CA$ 
denote the category
whose objects are all those of $\CA$ and whose morphisms are isomorphism
classes $[f]$ of partial maps. This underlies a locally ordered
2-category with 2-cells as above. Composition is that of spans: that is, by pullback.
We identify $f\dd X \ra Y$ in $\CA$ with the morphism $[1_X,X,f] \dd X \lra Y$ in $\CP$.
In this way, we have the $\CM$ we require for $\CP$ as the one in $\CA$.
For $m\dd U\ra X$ in $\CM$, a right adjoint in $\mathrm{Par}\CA$ is defined by 
$m^*= [m,U,1_U] \dd X\ra U$, and clearly $m^*\circ m = 1$.
This gives our functor $(-)^*\dd \CM^{\mathrm{op}} \ra \CP$. 
Every partial map $f = (X\stackrel{f_0}\lla U \stackrel{f_1}\lra Y)$ has 
$$[f] = f_{1}\circ f_0^* $$
where $f_0\in \CM$. Furthermore, $f_1 = m\circ e$ uniquely up to isomorphism
for $m\in \CM$ and $e\in \CE$. 
It follows therefore that $\CR=\CE$ and that $[f]\in \CS$ if and only if $f_1\in \CE$.

Now we look at our Assumptions. For Assumption~\ref{ass1}, 
we make use of Proposition~\ref{factorize}. First note
that since we are assuming the $\CM$-subobjects form a finite set, 
finite wide pullbacks can be obtained from pullbacks.
By assumption, pullbacks of $\CM$s exist in $\CA$ and a pullback of an 
$\CM$ is an $\CM$ in a factorization system.
It is a pleasant exercise to see that these pullbacks remain pullbacks in $\CP$
and become pushouts in $\CP$ on taking left adjoints. 
Assumption~\ref{ass2} also follows from this but it is clear anyway since 
$\CR$ is closed under composition because $\CE$ is.

For Assumption~\ref{ass3}, take $r=[1_A,A,e]$ with $e\in \CE$ and 
$m=[1_V,V,m]$ in $\CM$. To have $m^*\circ r = [n,P,u] \in \CR$, we must have
$n$ invertible and $u\in \CE$. Then $e=m\circ u \circ n^{-1}$ implies $m\in \CE$;
so $m$ is invertible. 

The class of partial maps in Assumption~\ref{ass4} are those
of the form $[m,U,n]$ with $m,n\in \CM$; these are closed under composition since
pullbacks of $\CM$s exist and are in $\CM$. 

Assumption~\ref{ass5} was one of our assumptions on the factorization system on $\CA$. 

Notice that $\CS$ is not closed under composition unless each pullback of an $\CE$
along an $\CM$ is an $\CE$. This is true in many examples. 
\end{example}

\begin{example}\label{Ex2+}
Take $\CP$ to be the category $\Delta_{\bot,\top}$ of finite
non-empty ordinals $n = \{0,1,\dots,n-1\}$ with morphisms those functions which preserve
first element, last element and order. This underlies a locally
ordered 2-category in which there is a 2-cell from $\xi$ to $\zeta$
just when $\xi(i)\le \zeta(i)$ for each $i$ in the domain. 
Functors out of $\CP$ are augmented simplicial objects because of the isomorphism \eqref{interval}. 
Take $\CM$ to consist of all the injective functions in $\Delta_{\bot,\top}$;
each such injection $\partial$ has a left adjoint $\partial^*$ 
(and also a right adjoint for that matter) when regarded as a functor between ordered sets; 
clearly $\partial^*$ is surjective with $\partial^* \circ \partial = 1$ since $\partial$ is a fully faithful functor.
A surjection $\sigma$ in $\CP$ is of the form $\partial^*$ if and only if $\sigma(i)=0$
implies $i=0$.    
We write $\sigma_k \dd m+1 \ra m$ for the order-preserving surjection which takes the value $k$ twice.
We write $\partial_k \dd m \ra m+1$ for the order-preserving injection which does not have $k$ in its image. 
Note that $\partial_0\notin \CP$, while $\sigma_k \dashv \partial_k  \dashv \sigma_{k-1}$ 
as functors and $\sigma_k$ is a $\partial^*$ if and only if $k>0$.
Every $\xi \in \CP$ factors uniquely as 
$$\xi = \partial_{i_s}\circ \dots \circ \partial_{i_1}\circ \sigma_{j_1}\circ \dots \circ \sigma_{j_t}$$ 
for $0< i_1<\dots < i_s < n-1$ and $0\le j_1<\dots < j_t < m-1$. 

We claim $\CR$ consists of the identities and the surjections $\sigma_0 : m+1 \ra m$.
The only invertible morphisms in $\CP$ are identities.
Since members of $\CR$ factor through no proper injection, they must be surjective.
Every surjection $\tau$ is either of the form $\partial^*$ or uniquely of the form 
$\sigma_0\circ \partial^*$. 
Neither of these forms is permissible for $\tau \in \CR$ unless the injection $\partial$
is an identity. This proves our claim. 
It is also clear then that $\CS$ consists of all the surjections in $\CP$. 

To prove the Assumptions, we make use of the simplicial identities 
(see page 24 of \cite{GabZis} for example) which, 
apart from $\sigma_i\circ \partial_i = 1 = \sigma_{i-1}\circ \partial_i$,
say that, for all $i<j$, 
$$\partial_j\circ \partial_i = \partial_i \circ \partial_{j-1} \ , \ \sigma_{j-1}\circ \sigma_i = \sigma_i \circ \sigma_j \ , \ \sigma_{j}\circ \partial_i = \partial_i \circ \sigma_{j-1} \ , \ \sigma_{i}\circ \partial_{j+1} = \partial_j \circ \sigma_i \ .$$

The hypotheses of Proposition~\ref{factorize} hold since the pullback of any two 
monomorphisms in $\CP$ exists and is absolute (that is, preserved by all functors); 
see page 27 of \cite{GabZis} on the Eilenberg-Zilber Theorem. 
Alternatively, notice that the squares
\begin{eqnarray*}
\xymatrix{
n-1 \ar[rr]^-{\partial_{j-1}} \ar[d]_-{\partial_{i}} && n \ar[d]^-{\partial_{i}} \\
n \ar[rr]_-{\partial_{j}}  && n+1 }
\qquad
\xymatrix{
n-1  && n \ar[ll]_-{\sigma_{j-1}} \\
n \ar[u]^-{\sigma_i}  && n+1 \ar[u]_-{\sigma_i} \ar[ll]^-{\sigma_j}}
\end{eqnarray*}
are respectively a pullback and pushout in $\CP$ for $0< i < j < n$.
Then the general result follows by stacking these squares vertically and horizontally.

In the present example, $\CS$ is closed under composition, making Assumption~\ref{ass2} clear.

For Assumption~\ref{ass3}, suppose we have $\partial^* \circ \rho \in \CR$ and $\rho \in \CR$. Since $\rho$ is surjective, $\partial^* \circ \rho = 1$ implies $\rho = 1$ 
and hence $\partial = 1$, as required. Otherwise $\partial^* \circ \rho = \sigma_0$.
Yet, if $\rho = 1$ this contradicts the lack of right adjoint for $\sigma_0$.
So $\rho = \sigma_0$. Thus $\partial^* \circ \sigma_0 = \sigma_0$, and we can
cancel $\sigma_0$ to obtain again $\partial = 1$. 

For Assumption~\ref{ass4}, the class $\CM\circ \CM^*$ of morphisms consists of
those which reflect $0$. That is, $\xi = \mu \circ \partial^*$ with
$\mu\in \CM$ if and only if $\xi(i)=0$ implies $i=0$. This class is
clearly closed under composition. 

Finally, Assumption~\ref{ass5} is clear. 

Notice that the arguments above equally apply to the full subcategory  
$\Delta_{\bot\neq\top}$ of $\Delta_{\bot,\top}$ obtained by removing the object $1$.
Functors $\Delta_{\bot\neq \top}\ra \CX$ are the traditional simplicial objects in $\CX$. 
\end{example}

\begin{example}\label{Ex3+} This example is about the cubical category $\mathbb{I}$ as used by Sjoerd Crans \cite{CransThesis} and Dominic Verity \cite{LDTHOC, Verity07}. 
Functors with domain $\mathbb{I}$ are cubical objects in the codomain category. 
Verity constructed $\mathbb{I}$ as the free monoidal category containing a cointerval.

For each natural number $k$, define a poset $\langle k\rangle = \{-,1,2,\dots ,k,+\}$ 
by adjoining a bottom element $-$ and a top element $+$ to the discrete poset $\{1,2,\dots ,k\}$.
Any function $f \dd \langle k\rangle \ra \langle h\rangle$ which preserves top and bottom is order-preserving.
Thus we get a locally partially ordered 2-category with objects the $\langle k\rangle$, 
with morphisms the top-and-bottom-preserving functions, and with the pointwise order. 
Let $\mathbb{I}$ be the locally full sub-2-category
consisting of those $f\dd \langle k\rangle \ra \langle h\rangle$ for which,
if $f(i),f(j)\notin \{-,+\}$ then $i<j$ if and only if $f(i)<f(j)$. 

Let $\CP$ be the underlying category of this $\mathbb{I}$.
Let $\CM$ consist of the morphisms in $\CP$ which are injective as functions. 
Given such an $m \dd \langle k\rangle \ra \langle h\rangle$ in $\CM$,
define $m^* : \langle h\rangle \ra \langle k\rangle$ to send each $m(i)$
in the image of $m$ to $i$ and everything else to $+$. 
Clearly $m^*\in \CP$ and $m^*\circ m = 1$.
Furthermore, $mm^*(j)$ is equal to $j$ if $j=m(i)$ for some $i$, and $+$ otherwise.
Therefore $1\le m\circ m^*$ showing $m^*$ to be left adjoint to $m$
with identity counit.

We can characterize morphisms of the form $m^*$ as those which are surjective as functions and reflect the bottom element $-$.
Consequently $\CR$ consists of the morphisms which are surjective as
functions and reflect the top element $+$.

Assumption~\ref{ass5}~and~\ref{ass2} are clear. 
\begin{equation}\label{cubpbpo}
\begin{aligned}
\xymatrix{
\langle \ell \rangle \ar[rr]^-{q} \ar[d]_-{p} &&  \langle v \rangle \ar[d]^-{n} \\
\langle u \rangle \ar[rr]_-{m}  && \langle k \rangle}
\qquad
\xymatrix{
\langle k \rangle \ar[rr]^-{n^*} \ar[d]_-{m^*} && \langle v \rangle \ar[d]_-{q^*} \ar@/^/[ddr]^g \\
\langle u \rangle \ar[rr]^-{p^*} \ar@/_/[drrr]_{f} && \langle \ell \rangle \\
&& & \langle h \rangle }
\end{aligned}
\end{equation}

For Assumption~\ref{ass1}, again we use Proposition~\ref{factorize}.
The existence of intersections is obvious.
However, we must show that taking left adjoints gives cointersections.
Take a pullback as in the left-hand diagram of \eqref{cubpbpo} 
and consider the right-hand diagram.
Assume $f\circ m^*=g\circ n^*$.
It suffices to show $f\circ p\circ p^*=f$.
Now $fpp^*(i)$ is equal to $fp(j)$ if $i=p(j)$ for some $j$, and equal to $+$ otherwise.
In the first case, we have $fpp^*(i)=fp(j)=f(i)$, as required.
In the second case, if $i$ does not have the form $p(j)$ then
$m(i)$ is not in the image of $n$, and so $gn^* m(i)=+$;
thus $f(i)=fm^*m(i)=+$ and we again have $fpp^*(i)=f(i)$.

For Assumption~\ref{ass3}, suppose $m\in \CM$ and $r, m^*\circ r\in \CR$. 
Suppose $m^*(i)=+$. 
Since $r$ is surjective, we have $i=r(j)$ so that $m^*r(j)=+$.
However $m^*\circ r$ reflects $+$. So $j=+$, yielding $i=r(+)=+$.
This proves $m^*$ reflects $+$ and therefore must be invertible.

For Assumption~\ref{ass4}, the composites of the form $m\circ n^*$
are clearly the (not necessarily surjective) morphisms which reflect $-$.
Clearly these are closed under composition.

There is another possible characterization of $\CR$, namely as the category of right adjoints to the morphisms in $\CM$. 
Thus in fact $\CR$ is dual to $\CM$. 
Now $\CM$ is really just the category $\Delta_{\mathrm{inj}}$ of finite ordinals and injective order-preserving maps, 
and $\CR \cong \CM^{\mathrm{op}}$.
Also the category $\CM^*$, with morphisms the $m^*\in \CM$, is dual to $\CM$. 
The factorization of Assumption~\ref{ass1} in this case shows the category 
$\CP$ is a composite 
$$\mathbb{I} = \Delta_{\mathrm{inj}}\circ \Delta_{\mathrm{inj}}^{\mathrm{op}}\circ \Delta_{\mathrm{inj}}^{\mathrm{op}} \ ,$$
relative to suitably defined distributive laws.

An alternative viewpoint is that $\mathbb{I}$ is 
$\mathrm{Par}\mathrm{Par}\Delta_{\mathrm{inj}}$,
where in each case partial maps are defined relative to the morphisms in 
$\Delta_{\mathrm{inj}}$. 
\end{example}

\section{The kernel module}\label{ker}

We write $1/\mathrm{Set}$ for the category of pointed sets.
We write $X \wedge Y$ for the monoidal tensor (= smash product) on pointed sets.
Let $\mathrm{p}\Lambda$ denote the free pointed set 
$1+\Lambda$ on the set $\Lambda$.
This defines the value on objects of a strong monoidal functor 
$\mathrm{p} \dd \mathrm{Set} \ra 1/\mathrm{Set}$
whose right adjoint is the forgetful functor.

In the setting of Section~\ref{setting}, we shall define a functor
\begin{eqnarray}\label{moduleM}
M\dd \CD^{\mathrm{op}}\times \CP \lra 1/\mathrm{Set}
\end{eqnarray}
which preserves zeros in the first variable.

Using the notation of \eqref{fact_u}, define $M$ on objects by
\begin{eqnarray}\label{Mobj}
M(A,B) = \mathrm{p}\{u\in \CP(A,B) \dd s_u\in \CR \} \ .
\end{eqnarray}
Suppose $r\dd A_1 \ra A$ in $\CR$ and $f\dd B\ra B_1$ in $\CP$.
Define $M$ on morphisms by
\begin{eqnarray}\label{Mmor}
M(r,f)u = 
\begin{cases}
f\circ u \circ r & \text{for }  s_{fur}\in \CR \\
0 & \text{otherwise} \ .
\end{cases}
\end{eqnarray}
To verify that this is a functor we need to see that 
$$M(r_1,f_1)M(r,f)u = M(r\circ_{\CD} r_1,f_1\circ f)u$$ holds. 
This amounts to showing
that the left-hand side is non-zero if and only if the right-hand side is, 
because then both sides
equal $f_1\circ f\circ u \circ r\circ r_1$. 
In other words, we need to see that 
$s_{fur}, s_{f_1furr_1}\in \CR$ if and only if $r\circ r_1, s_{f_1furr_1}\in \CR$.
In fact, $s_{f_1furr_1}\in \CR$ implies both $s_{fur}\in \CR$ and $r\circ r_1\in \CR$.   
For we know by Assumption~\ref{ass2} that $r\circ r_1 \in \CS$; 
so, by Proposition~\ref{esssub}(b), $s_{f_1furr_1}\in \CR$ implies $r\circ r_1 \in \CR$.
By Proposition~\ref{esssub}(a), we also conclude that $s_{furr_1}\in \CR$;
then Proposition~\ref{esssub}(b) gives $s_{fur}\in \CR$.    

Let $\mathrm{p}_{*}\CP$ denote the free $1/\mathrm{Set}$-enriched category on the category $\CP$. 
For any locally pointed (that is, $1/\mathrm{Set}$-enriched) category $\CX$, we have an isomorphism of categories
$$[\mathrm{p}_{*}\CP,\CX]_{\mathrm{pt}} \cong [\CP,\CX] \ , $$ 
where, for emphasis, we write the subscript ``$\mathrm{pt}$'' 
for the pointed-set-enriched functor category.    

We identify \eqref{moduleM}
with its obvious extension to a $1/\mathrm{Set}$-functor
\begin{eqnarray}\label{additiveM}
M \dd \CD^{\mathrm{op}} \otimes \mathrm{p}_{*}\CP \lra 1/\mathrm{Set} \ .
\end{eqnarray}
Now we are in the situation for a kernel adjunction of the form \eqref{keradj} with 
$\CV = 1/\mathrm{Set}$. For any suitably complete and cocomplete 
$1/\mathrm{Set}$-category $\CX$, we have an adjunction
\begin{equation}\label{settingkeradj} 
\xymatrix @R-3mm {
[\CD,\CX]_{\mathrm{pt}} \ar@<1.5ex>[rr]^{\widehat{M}} \ar@{}[rr]|-{\bot} && [\CP,\CX]
 \ar@<1.5ex>[ll]^{\widetilde{M}} \ .
}
\end{equation}
More combinatorial expressions than \eqref{coendhat} and \eqref{endtilde} exist for $\widehat{M}$ and $\widetilde{M}$ in our present situation as we shall now see.
\begin{theorem}\label{combadj}
Suppose $M$ is the kernel as in \eqref{additiveM}.
Let $\CX$ be any suitably complete and cocomplete 
$1/\mathrm{Set}$-category. 
For any zero preserving functor $F\dd \CD\ra \CX$ and any functor 
$T\dd \CP \ra\CX$, there are isomorphisms
\begin{eqnarray*}
\widehat{M}(F)B \cong \sum_{S\preceq_m B}{FS} \ ,
\qquad
\widetilde{M}(T)A \cong \bigcap_{U\prec_m A} {\mathrm{ker} \ {T(m^*\dd A\ra U)}} \ .
\end{eqnarray*}
\end{theorem}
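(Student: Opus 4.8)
The plan is to begin from the general coend and end expressions \eqref{coendhat} and \eqref{endtilde} for the two adjoints and to simplify each of them using the canonical factorization \eqref{fact_u} together with Assumptions~\ref{ass1}--\ref{ass5}. Throughout I write $S_u=\mathrm{dom}(m_u)$ for the subobject of the codomain cut out by the $\CM$-factor of a morphism $u$, and I recall that, by the uniqueness in Assumption~\ref{ass1}, any morphism already lying in $\CS$ has invertible $\CM$-factor in its \eqref{fact_u}-factorization.

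For $\widehat{M}$ the decisive step is to split the kernel module in its first variable as a wedge of representables. By \eqref{Mobj} and unique factorization (Assumption~\ref{ass1}), the assignment $u\mapsto(S_u,s_u)$ is, for each fixed $B$, a bijection between the non-basepoint elements of $M(A,B)$ and $\coprod_{S\preceq_m B}\CR(A,S)$; since $\CD(A,S)$ is $\CR(A,S)$ with a zero adjoined, this identifies $M(-,B)$ with the wedge $\bigvee_{S\preceq_m B}\CD(-,S)$ of the representable pointed functors on $\CD$. The real content is that this identification is \emph{natural} in $A\in\CD$. For $\alpha\in\CR$, formula \eqref{Mmor} gives $M(\alpha,1)u=u\circ\alpha$ when $s_{u\alpha}\in\CR$ and $0$ otherwise, and I must check that the subobject is never moved, i.e. $S_{u\alpha}=S_u$. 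This holds because $s_u\circ\alpha\in\CS$ by Assumption~\ref{ass2}, so its $\CM$-factor is invertible and the $\CM$-factor of $u\circ\alpha$ is $m_u$ up to isomorphism; the vanishing case matches since there $s_u\circ_{\CD}\alpha=0$ and $F$ preserves zeros. Granting the natural decomposition, the copower and the coend distribute over the (finite, by Assumption~\ref{ass5}) wedge, and the co-Yoneda lemma evaluates each summand, $\int^{A}\CD(A,S)\cdot FA\cong FS$, yielding $\widehat{M}(F)B\cong\bigoplus_{S\preceq_m B}FS$.

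For $\widetilde{M}$ I would argue dually but more directly. By \eqref{endtilde} and \eqref{Mobj}, an element of $\widetilde{M}(T)A=\int_B[M(A,B),TB]$ is a pointed family assigning to each $u\dd A\ra B$ with $s_u\in\CR$ an element $\xi_B(u)\in TB$, natural in $B$. Testing naturality against $f=u$ on the identity $1_A$ gives $\xi_B(u)=Tu\cdot\zeta$ with $\zeta:=\xi_A(1_A)$, so the family is determined by the single element $\zeta\in TA$; the surviving naturality conditions then read $T(f\circ u)\cdot\zeta=0$ precisely when $s_u\in\CR$ but $s_{f\circ u}\notin\CR$. Writing any such composite as $g=n\circ r\circ m^*$ via Assumption~\ref{ass1}, the condition $s_g\notin\CR$ is equivalent to $m$ being non-invertible, i.e. $\mathrm{dom}(m)\prec_m A$; taking $u=1_A$ and $f=m^*$ shows every proper $m^*$ arises this way, while conversely $T(g)=Tn\cdot Tr\cdot T(m^*)$ kills $\zeta$ as soon as $T(m^*)\zeta=0$. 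Thus the constraints on $\zeta$ are exactly that it kill each proper $m^*$, and Assumption~\ref{ass5} makes the intersection finite, giving $\widetilde{M}(T)A\cong\bigcap_{U\prec_m A}\ker T(m^*)$.

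The main obstacle is the naturality claim underpinning the $\widehat{M}$ computation: one must be certain that the $\CD$-action on the kernel module never slides a morphism into a \emph{smaller} subobject of $B$, and that the degenerate cases of \eqref{Mmor} are compatible with $F$ preserving zeros. Both points rest on the interaction between Assumption~\ref{ass2} and the rule that composition in $\CD$ is zero outside $\CR$; once this is secured the decomposition is formal and co-Yoneda finishes the job. For $\widetilde{M}$ the analogous difficulty is milder, being dispatched entirely by the factorization of Assumption~\ref{ass1}.
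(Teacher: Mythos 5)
Your argument is correct, but the two halves compare differently with the paper's proof. For $\widehat{M}$ you are doing essentially what the paper does: your decomposition $M(-,B)\cong\bigvee_{S\preceq_m B}\CD(-,S)$ followed by co-Yoneda is just a repackaging of the paper's direct verification that $\chi_A\circ\mathrm{in}_u=\mathrm{in}_{m_u}\circ Fs_u$ is a universal wedge, and your key naturality check (that the $\CD$-action never moves the subobject $S_u$, via Assumption~\ref{ass2} and uniqueness in Assumption~\ref{ass1}) is exactly the computation $s_u\circ_{\CD}r=s_{ur}$ appearing there. For $\widetilde{M}$, however, you take a genuinely different route: you compute the end $\int_B[M(A,B),TB]$ directly, reducing a natural family to the single (generalized) element $\zeta=\xi_A(1_A)$ and showing the surviving constraints are precisely $T(m^*)\zeta=0$ for proper $m$. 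The paper never computes this end; it instead \emph{defines} $\widetilde{T}A=\bigcap_{U\prec_m A}\ker T(m^*)$, checks it is functorial on $\CD$, and exhibits an explicit bijection $[\CP,\CX](\widehat{F},T)\cong[\CD,\CX]_{\mathrm{pt}}(F,\widetilde{T})$, concluding $\widetilde{M}(T)\cong\widetilde{T}$ by uniqueness of adjoints. Your version is more elementary and self-contained; the paper's version buys, in one stroke, the functoriality of $A\mapsto\bigcap\ker T(m^*)$ and explicit descriptions of the unit and counit that are reused in Sections~\ref{reducto} and \ref{semiabelian}. If you keep your route, you should add the (routine) observation that your pointwise isomorphism is natural in $A$, i.e.\ that for $r\in\CR$ the map $Tr$ restricts to the intersections of kernels --- this is where the paper invokes the factorization $n^*\circ r=\ell\circ r_1\circ m^*$ and Assumption~\ref{ass3}, and it is not automatic from your elementwise computation alone.
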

\begin{proof}
We shall prove $$\int^{A\in \CD}{M(A,B)\wedge FA} \cong \sum_{S\preceq_m B}{FS}$$
(where we are using $\Lambda\wedge X$ to denote the pointed-set-enriched tensor
of the pointed set $\Lambda$ with $X\in \CX$) by showing that the family of morphisms $\chi_A$ as defined by the diagram
\begin{equation}
\begin{aligned}
\xymatrix{
M(A,B)\wedge FA \ar[rr]^-{\chi_A} && \sum_{S\preceq_m B}{FS}  \\ 
FA \ar[u]^-{\mathrm{in}_u} \ar[rr]_-{Fs_u} && FS_u \ar[u]_-{\mathrm{in}_{m_u}} \ ,}
\end{aligned}
\end{equation}
for $u\in M(A,B)$, using the notation of \eqref{fact_u}, 
is a universal extraordinary natural transformation (that is, a universal wedge). 
In order for a family $\theta_A \dd M(A,B)\wedge FA \ra X$ in $\CX$ to be
extraordinary natural, we require 
\begin{eqnarray}\label{wedge}
\theta_A\circ \mathrm{in}_u\circ Fr = 
\begin{cases}
\theta_{A_1}\circ \mathrm{in}_{ur} & \text{for }  s_{ur}\in \CR \\
0 & \text{otherwise} \ .
\end{cases}
\end{eqnarray}
for all $r\dd A_1\ra A$ in $\CR$ and $u\dd A\ra B$ in $\CP$ with $s_u \in \CR$.
Notice that in this situation $s_{ur} = s_u\circ r$ by uniqueness of factorization and Assumption~\ref{ass2}. 

Now for the family $\chi$ we have $\chi_A\circ \mathrm{in}_u\circ Fr =  \mathrm{in}_{m_u}\circ F(s_u\circ_{\CD}r)$. 
If $s_{ur}\in \CR$ then $s_u\circ_{\CD}r = s_u\circ r = s_{ur}$, so
$\mathrm{in}_{m_u}\circ F(s_u\circ_{\CD}r) = \mathrm{in}_{m_u}\circ Fs_{ur} = \chi_{A_1} \circ \mathrm{in}_{ur}$; otherwise, $F(s_u\circ_{\CD}r) = F(0) = 0$, and we have our extraordinary naturality.

For universality, we must see that a general wedge $\theta$ factors uniquely through $\chi$.
Define $\phi \dd \sum_{S\preceq_m B}{FS} \ra X$ by $\phi \circ \mathrm{in}_m = \theta_S\circ \mathrm{in}_m$.
With $s_u\in \CR$, we have, using \eqref{wedge}, 
$$\theta_A \circ \mathrm{in}_u =  \theta_A \circ \mathrm{in}_{m_us_u} = 
\theta_{S_u}\circ \mathrm{in}_{m_u} \circ Fs_u = \phi \circ \mathrm{in}_{m_u} \circ Fs_u = \phi \circ \chi_A \circ \mathrm{in}_u  \ .$$
Thus $\theta =  \phi\circ\chi$. By taking $u=m\in \CM$, we also see that the definition of $\phi$ is forced. 

Define $\widetilde{T}A = \bigcap_{U\prec_m A} {\mathrm{ker} \ {T(m^*\dd A\ra U)}}$ 
with inclusion $\iota_A \dd \widetilde{T}A \ra TA$,
and, for $r\dd A\ra A_1$, define $\widetilde{T}r \dd \widetilde{T}A \ra \widetilde{T}A_1$ to be the restriction of $Tr$.
For this we need to see that $Tr \circ \iota_A$ factors through $\iota_{A_1}$.
Take a non-invertible $n\dd V\ra A_1$. Using Assumption~\ref{ass1}, we have $n^*\circ r = \ell \circ r_1 \circ m^*$. So $(n\circ \ell)^*\circ r = r_1\circ m^*$. If $m$ is invertible then $(n\circ \ell)^*\circ r \in \CR$. By Assumption~\ref{ass3}, $n\circ \ell$ is invertible. So $n$ is invertible, a contradiction. So $m$ is not invertible and we have 
$Tn^*\circ Tr \circ \iota_A = T(\ell \circ r_1) \circ Tm^* \circ \iota_A = T(\ell \circ r_1) \circ 0 = 0$, yielding $\widetilde{T}r$ with 
$\iota_{A_1} \circ \widetilde{T}r = Tr \circ \iota_A$.         

Put $\widehat{F}B = \sum_{S\preceq_m B}{FS}$ and transport the functoriality in $B$
across the isomorphism with $\widehat{M}(F)B$.
We will prove that there is a natural isomorphism
$$[\CP,\CX](\widehat{F}, T) \cong [\CD,\CX]_{\mathrm{pt}}(F,\widetilde{T})$$
and hence conclude that $\widetilde{M}(T) \cong \widetilde{T}$.
Take a natural transformation $\theta \dd \widehat{F}\Rightarrow T \dd \CP \ra \CX$. 
The definition of $\widehat{F}f$ involves writing $f\circ m = m_{fm} \circ s_{fm}$ for $m:U\to A$ and $m_{fm} : V\to B$ then, 
with $s_{fm}\in \CR$, we have commutativity in the following diagram. 
\begin{equation}\label{thetanat}
\begin{aligned}
\xymatrix{
FU \ar[r]^-{\mathrm{in}_U} \ar[d]_-{Fs_{fm}} & \widehat{F}A \ar[d]^-{\widehat{F}f} \ar[r]^-{\theta_A} & TA \ar[d]^-{Tf}\\
FV \ar[r]_-{\mathrm{in}_V} & \widehat{F}B \ar[r]_-{\theta_B} & TB}
\end{aligned}
\end{equation}
Consider a noninvertible $\ell \dd W \ra A$ in $\CM$. 
Then $\ell^* \in \CS$ and $\ell^* \notin \CR$.
So $T\ell^*\circ \theta_A \circ \mathrm{in}_{A} = \theta_W \circ \widehat{F}\ell^*\circ \mathrm{in}_{A} = \theta_W \circ 0 = 0$. This implies there exists a unique morphism $\phi_A \dd FA \lra \widetilde{T}A$ 
such that the following square commutes.
\begin{equation}\label{defphi}
\begin{aligned}
\xymatrix{
FA \ar[rr]^-{\phi_A} \ar[d]_-{\mathrm{in}_A} && \widetilde{T}A \ar[d]^-{i_A} \\
\widehat{F}A \ar[rr]_-{\theta_A} && TA}
\end{aligned}
\end{equation}
Naturality of $\phi$ is proved as follows using \eqref{thetanat} with $f=r \in \CR$:
$i_B\circ \phi_B\circ Fr = \theta_B \circ \mathrm{in}_{B} \circ Fr = \theta_B \circ \widehat{F}r \circ \mathrm{in}_{A} = Tr \circ \theta_A \circ \mathrm{in}_{A} = Tr \circ i_A \circ \phi_A = i_B\circ \widetilde{T}r\circ \phi_A $. 

For the inverse direction, take any natural transformation $\phi \dd F\Rightarrow \widetilde{T} \dd \CD\ra \CX$.
Define $\theta$ by commutativity of the following diagram.
\begin{equation}\label{thetadef}
\begin{aligned}
\xymatrix{
FU \ar[r]^-{\mathrm{in}_U} \ar[d]_-{\phi_U} & \widehat{F}A  \ar[r]^-{\theta_A} & TA \\
\widetilde{T}U \ar[rr]_-{i_U} &  & TU \ar[u]_-{Tm}}
\end{aligned}
\end{equation}       
We need to prove the right-hand square of \eqref{thetanat} commutes when precomposed
with any $\mathrm{in}_U$ for any $m:U\ra A$ in $\CM$.  
In the case where $s_{fm}\in \CR$, the desired commutativity is a consequence of the commutativity of the following three squares.
\begin{equation}\label{pfthetanat}
\begin{aligned}
\xymatrix{
FU \ar[r]^-{\phi_U} \ar[d]_-{Fs_{fm}} & \widetilde{T}U \ar[d]_-{\widetilde{T}s_{fm}} \ar[r]^-{i_A} & TU \ar[r]^-{Tm} \ar[d]^-{Ts_{fm}} & TA \ar[d]^-{Tf}\\
FV \ar[r]_-{\phi_V} & \widetilde{T}V  \ar[r]_-{i_B} & TV \ar[r]_-{Tn_{fm}} & TB}
\end{aligned}
\end{equation}
In the case where $s_{fm}\notin \CR$, we can write $s_{fm} = r\circ \ell^*$ for some noninvertible $\ell \in \CM$.
Then (using both $U\preceq A$ and $U\preceq U$) we have $Tf \circ \theta_A \circ \mathrm{in}_U = T(f\circ m)\circ i_U \circ \phi_U =T(n_{fm}\circ r) \circ T\ell^*\circ i_U \circ \phi_U = T(n_{fm}\circ r) \circ 0 \circ \phi_U = 0 = \theta_B \circ \widehat{F}f \circ \mathrm{in}_U$, as required.

To show that the assignments are mutually inverse, take $\theta$ and define $\phi$ by \eqref{defphi}. 
Let $\bar{\theta}$ be as $\theta$ is in \eqref{thetadef}. Then $\bar{\theta}_A\circ \mathrm{in}_U = Tm \circ i_U\circ \phi_U = Tm \circ \theta_U \circ \mathrm{in}_{U} = \theta_A \circ \widehat{F}m\circ \mathrm{in}_{U} = \theta_A \circ \mathrm{in}_U$. So $\bar{\theta} = \theta$. 

On the other hand, take $\phi$ and define $\theta$ by \eqref{thetadef}. 
Let $\phi^{\prime}$ be as $\phi$ is in \eqref{defphi}. 
Then $i_A \circ \phi^{\prime}_A = \theta_A \circ \mathrm{in}_{A} = i_A\circ \phi_A$. 
So $\phi^{\prime} = \phi$.    
\end{proof}

\section{Reduction of the problem}\label{reducto}

The problem being referred to in the title of this section is to show that the adjunction \eqref{settingkeradj} is an equivalence. 
We wish to use the theory in Appendix~\ref{gr} to reduce the problem to the 
special case where $\CR$ consists only of invertible morphisms. 
In Section~\ref{tpmc}, we will treat this special case. 

Consider $\CP$, $\CM$, $\CS$, $\CR$ and $\CD$ as in the setting of Section~\ref{setting}.

Let $\CI$ be the class of invertible morphisms in $\CP$.
Let $\CK = \CM\circ \CM^*$ be the subcategory of $\CP$ as assured by 
Assumption~\ref{ass4}.
Then $\CK$, $\CM$, $\CM^*$, $\CI$ and $\mathrm{p}_*\CI$ also fit the setting of Section~\ref{setting}.
Denote the kernel module for this situation by 
$N \dd \CI^{\mathrm{op}} \times \CK \ra 1/\mathrm{Set}$;
on objects it is given by $N(C,D) = \mathrm{p}\CM(C,D)$.  

Let $L\dd \CI \ra \CD$ and $K\dd \CK \ra \CP$ be the inclusions;
both are the identity on objects and so Cauchy dense. 

We have the obvious inclusion  
\begin{eqnarray}\label{specialtheta}
\theta_{C,D} \dd N(C,D) \ra M(LC,KD)
\end{eqnarray}
taking $m\in \CM(C,D)$ to $m \in \CP(C,D)$ (using the fact that $s_m = 1$).

\begin{proposition}\label{twocoends}
For $\theta \dd N \Rightarrow M(L,K)$ defined at \eqref{specialtheta}, 
the corresponding natural families
$$\theta^r_{A,D} \dd \int^{C\in \CI}{N(C,D)\wedge \CD(A,LC)} \lra M(A,KD) \ ,$$
$$\theta^{\ell}_{C,B} \dd \int^{D\in \CK}{\mathrm{p}\CP(KD,B)\wedge N(C,D)} \lra M(LC,B) \ ,$$
as in Appendix~\ref{gr}, are both invertible. 
\end{proposition}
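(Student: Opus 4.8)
The plan is to verify that each of the two comparison maps is a bijection of pointed sets, pointwise in the remaining variable; since a $1/\mathrm{Set}$-natural family is invertible exactly when each component is, this suffices. Because $\mathrm{p}$ is strong monoidal, the integrand $N(C,D)\wedge \CD(A,HC)$ is the free pointed set on $\CM(C,D)\times \CR(A,C)$ (recall $HC=C$ and $\CD(A,C)=\mathrm{p}\CR(A,C)$), and likewise $\mathrm{p}\CP(KD,B)\wedge N(C,D)=\mathrm{p}\bigl(\CP(D,B)\times \CM(C,D)\bigr)$; in each case a pair one of whose entries is an adjoined basepoint maps to the basepoint. Unwinding the construction of Appendix~\ref{gr}, on a representative pair $\theta^r$ acts by $(m,r)\mapsto M(r,1_D)(m)=m\circ r$ and $\theta^{\ell}$ acts by $(v,m)\mapsto M(1_C,v)(m)=v\circ m$, each read as $0$ when the relevant $s$-term fails to lie in $\CR$. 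Throughout I use that for $m\in\CM$ and $r\in\CR$ the factorization \eqref{fact_u} of $m\circ r$ has $s_{m\circ r}=r\in\CR$, by uniqueness in Assumption~\ref{ass1}; in particular such composites land in $M$.

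For $\theta^{r}$ the indexing category $\CI$ is a groupoid, so the coend is the set of pairs $(m,r)$ with $m\in\CM(C,D)$ and $r\in\CR(A,C)$ modulo the single relation $(m\circ g,\,r)\sim(m,\,g\circ r)$ for $g\in\CI$. Surjectivity is immediate: given $u\in M(A,D)$, that is $u\in\CP(A,D)$ with $s_u\in\CR$, the factorization $u=m_u\circ s_u$ shows $(m_u,s_u)\mapsto u$. For injectivity, suppose $(m_1,r_1)$ and $(m_2,r_2)$ both map to $u$. Then $m_1\circ r_1=u=m_2\circ r_2$ are two factorizations of $u$ of the shape (member of $\CM$)$\circ$(member of $\CR\subseteq\CS$); uniqueness up to isomorphism in Assumption~\ref{ass1} supplies an invertible $g$ with $m_1=m_2\circ g$ and $r_2=g\circ r_1$, which is precisely the groupoid coend relation. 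Hence $\theta^{r}$ is a bijection, with inverse $u\mapsto[m_u,s_u]$ (well defined because the factorization is unique up to the coend relation).

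For $\theta^{\ell}$ the category $\CK$ is not a groupoid, and the argument has two moves. First, applying dinaturality at a \emph{pure} morphism $m\in\CM\subseteq\CK$ and using $N(C,m)(1_C)=m$ (valid since $m\in\CM$) gives $[v,m]=[v\circ m,1_C]$; thus every class reduces to the form $[w,1_C]$ with $w\in\CP(C,B)$. Second, I claim $[w,1_C]$ is the basepoint whenever $s_w\notin\CR$: writing $s_w=r\circ n^{*}$ as a member of $\CS$, where $n\in\CM$ is non-invertible (if $n$ were invertible then $s_w\in\CR$, by the routine closure of $\CR$ under composition with isomorphisms), and setting $w=(m_w\circ r)\circ n^{*}$, dinaturality at $k=n^{*}\in\CK$ yields $[w,1_C]=[\,m_w\circ r,\;N(C,n^{*})(1_C)\,]$; but $N(C,n^{*})(1_C)=0$, since $n^{*}\notin\CM$ (a morphism of $\CM$ is a split monomorphism while $n^{*}$ is a split epimorphism, so $n^{*}\in\CM$ would force $n$ invertible). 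Conversely $\theta^{\ell}[w,1_C]=w$ exactly when $s_w\in\CR$. These two facts show that $w\mapsto[w,1_C]$ (for $s_w\in\CR$), sending basepoint to basepoint, is a two-sided inverse of $\theta^{\ell}$, so $\theta^{\ell}$ is a bijection.

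The main obstacle is the $\theta^{\ell}$ computation, and specifically the second move: one must see that the $\CK$-action on $N$ collapses precisely the pairs lying outside the image of the comparison map, i.e.\ that $N(C,n^{*})$ annihilates $1_C$ exactly when the $\CM^{*}$-factor $n^{*}$ is non-trivial, which is what corresponds to ``$s_w\notin\CR$''. This rests on the characterization $N(C,D)=\mathrm{p}\CM(C,D)$ of the smaller kernel (equivalently, that for $u\in\CK$ the term $s_u$ is invertible iff $u\in\CM$) together with Assumption~\ref{ass3}; the remaining closure facts for $\CR$ and $\CI$ under composition with isomorphisms are routine from the definitions. The $\theta^{r}$ half, by contrast, is little more than a restatement of the uniqueness clause of Assumption~\ref{ass1}.
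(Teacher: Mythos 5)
Your proposal is correct and follows essentially the same route as the paper's proof: for $\theta^r$ you use the uniqueness clause of Assumption~\ref{ass1} to identify the groupoid coend with the set of $u$ having $s_u\in\CR$, and for $\theta^{\ell}$ you use dinaturality first at $m\in\CM$ to reduce every class to the form $[w\wedge 1_C]$ and then at the $\CM^*$-factor of $s_w$ to show the class is the basepoint exactly when $s_w\notin\CR$, exhibiting $u\mapsto[u\wedge 1_C]$ as the inverse. The only differences are expository (you spell out the smash-product and groupoid-coend identifications and the split-mono/split-epi reason that $n^*\in\CM$ forces $n$ invertible, which the paper asserts without comment).
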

\begin{proof}
For $r\dd A\ra C$ in $\CR$ and $m\dd C\ra D$ in $\CM$, 
we have $\theta^r_{A,D}[m\wedge r]= m\circ r$. 
Yet every $u\dd A\ra D$ with $s_u\in \CR$ factors uniquely up
to a morphism in $\CI$ as $u=m_u\circ s_u = \theta^r_{A,D}[m_u\wedge s_u]$.
So we have our inverse bijection.

By the definition of the second coend, for all $k\dd D\ra D_1$ in $\CK$ and 
$g\dd D_1\ra B$, we have $[(g\circ k)\wedge m]$ equal to $[g\wedge (k\circ m)]$
when $k\circ m \in \CM$, zero otherwise.
For $m\dd C\ra D$ in $\CM$ and $f\dd D\ra B$ in $\CP$, we have
$[f\wedge m] = [(f\circ m)\wedge 1] = [u\wedge 1]$ for some $u \in \CP$.
But $u = \ell \circ r \circ n^*$ for some $n,\ell \in \CM$ and $r\in \CR$.
So $[u\wedge 1] = [(\ell \circ r)\wedge n^*]$ provided $n^*\in \CM$, zero
otherwise. However, $n^*\in \CM$ implies $n$ invertible. 
So $s_u\in \CR$. In other words, $[f\wedge m]$ is either zero or of the
form $[u\wedge 1]$ for a unique $u$ with $s_u\in \CR$.  
By definition, $\theta^{\ell}_{C,B}[f\wedge m]= f\circ m$ when $s_{fm}\in \CR$,
zero otherwise. 
We have shown that an inverse to $\theta^{\ell}_{C,B}$ is provided by 
$u \mapsto [u\wedge 1_C]$.
\end{proof}

Combining Proposition~\ref{twocoends} and Corollary~\ref{gencor}, we have:
 
\begin{corollary}\label{NgivesM}
If $\widetilde{N}$ is an equivalence or crudely monadic then so is $\widetilde{M}$.
\end{corollary}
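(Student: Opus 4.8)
The plan is to read this off as a direct instance of the general transport principle of Appendix~\ref{gr}, namely Corollary~\ref{gencor}, so that the whole task reduces to checking that the present data satisfy that corollary's hypotheses. There are exactly two things to confirm. First, the comparison functors $H\dd \CI\ra \CD$ and $K\dd \CK\ra \CP$ must be Cauchy dense: this was already observed, since each is the identity on objects, and identity-on-objects functors are automatically Cauchy dense (every object lies in the image, hence is a trivial absolute colimit of objects from the image). Second, the two canonical families $\theta^r$ and $\theta^\ell$ attached to the module morphism $\theta\dd N\Rightarrow M(H,K)$ of \eqref{specialtheta} must both be invertible---and this is precisely the conclusion of Proposition~\ref{twocoends}.

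Granting these, Corollary~\ref{gencor} does the rest. Its role is to use the invertibility of $\theta^r$ and $\theta^\ell$, together with the Cauchy density of $H$ and $K$, to identify the right adjoint $\widetilde{M}$ of the adjunction \eqref{settingkeradj}---up to the comparisons induced on $\CV$-functor categories by $H$ and $K$---with the analogous right adjoint $\widetilde{N}$ built from the smaller data $(\CK,\CM,\CM^*,\CI)$. I would simply invoke it: since the hypotheses hold, the corollary transports the relevant property from $\widetilde{N}$ to $\widetilde{M}$. For the case ``equivalence'' this is transparent, as being an equivalence is stable under composition with the Cauchy-density comparisons on either side; the corollary is moreover formulated to cover the case ``crudely monadic'' simultaneously, that property being stable under the same comparisons.

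In truth the substantive work has already been carried out, in Proposition~\ref{twocoends} and in the appendix, so there is no genuinely hard step remaining here. The only point requiring care is bookkeeping: confirming that the present $\theta$, $H$, $K$, and the smaller kernel module $N$ are exactly the ingredients that Corollary~\ref{gencor} expects, and that its conclusion is stated for both the equivalence and the crudely monadic cases rather than the former alone. If the appendix established only the equivalence case, the remaining obstacle would be to supplement it with the (routine) observation that crude monadicity is likewise preserved under the comparison equivalences; but as the corollary is cited here for both cases, I expect this is already incorporated.
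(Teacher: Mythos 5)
Your proposal matches the paper's proof exactly: the paper derives Corollary~\ref{NgivesM} in one line by combining Proposition~\ref{twocoends} (invertibility of $\theta^r$ and $\theta^\ell$) with Corollary~\ref{gencor}, the Cauchy density of $H$ and $K$ having already been noted from their being identity on objects. Your bookkeeping of the hypotheses is the same as the paper's, and Corollary~\ref{gencor} is indeed stated for both the equivalence and crudely monadic cases, so no supplement is needed.
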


\section{The case $\CP = \CK$}\label{tpmc}

Let $\CK$ be a category with a subcategory $\CM$ which contains all the isomorphisms. 
Assume we have a functor $(-)^* \dd \CM^\mathrm{op} \ra \CK$ such that
$m^*\circ m = 1$ for all $m\in \CM$.
Assume every morphism $f$ in $\CK$ factors as $f = m\circ \ell^*$ with
$m, \ell \in \CM$. 
Then $(\CM^*,\CM)$ is a factorization system on $\CK$ in the sense of
\cite{FreydKelly}; indeed, $\CM^*$ consists of the retractions and $\CM$ of the coretractions.
We also assume the set $\mathrm{Sub}A$ of $\CM$-subobjects of each
object $A$ is finite. Let $\CI$ be the groupoid of invertible morphisms in $\CK$.
  
Let $J\dd \CI \ra \CM$ and $I \dd \CM \ra \CK$ be the inclusion functors. 
Both functors are the identity on objects.

Let $\CX$ be any category admitting finite coproducts and finite products.

\begin{lemma}\label{leftKan}
Each functor $F\dd \CI \ra \CX$ has a pointwise left Kan extension
\begin{equation}
\begin{aligned}
\xymatrix{
\CI \ar[rd]_{F}^(0.5){\phantom{a}}="1" \ar[rr]^{J}  && \CM \ar[ld]^{\mathrm{Lan}_JF}_(0.5){\phantom{a}}="2" \ar@{=>}"1";"2"^-{\kappa_F}
\\
& \CX 
}
\end{aligned}
\end{equation} 
along $J\dd \CI \ra \CM$ defined on objects by:
$$(\mathrm{Lan}_JF)X = \sum_{U\preceq_m X}FU \ .$$
For morphisms $f \dd X\ra Y$ in $\CM$, the following triangle commutes.
\begin{equation}
\begin{aligned}
\xymatrix{
FU \ar[drr]_-{\mathrm{in}_{fm}} \ar[rr]^-{\mathrm{in}_m} && (\mathrm{Lan}_JF)X \ar[d]^-{(\mathrm{Lan}_JF)f} \\
&& (\mathrm{Lan}_JF)Y}
\end{aligned}
\end{equation}
\end{lemma}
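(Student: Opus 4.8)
The plan is to recognise the claimed formula as the value of the standard \emph{pointwise} left Kan extension and to compute it through the usual comma-category colimit. Recall that, whenever the relevant colimits exist in $\CX$, the pointwise left Kan extension of $F$ along $J$ is given at an object $X$ by
\[
(\mathrm{Lan}_J F)X \;\cong\; \mathop{\mathrm{colim}}\bigl(J\downarrow X \xrightarrow{\ P\ } \CI \xrightarrow{\ F\ } \CX\bigr),
\]
where $J\downarrow X$ is the comma category whose objects are the pairs $(C,u)$ consisting of $C\in\CI$ and a map $u\dd C\ra X$ in $\CM$ (using $JC=C$), whose morphisms $(C,u)\ra(C',u')$ are the isomorphisms $h\dd C\ra C'$ satisfying $u'\circ h=u$, and where $P$ is the evident projection. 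Thus the whole lemma reduces to analysing this comma category, exhibiting its colimit, and reading off the induced action on morphisms of $\CM$.

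The key step, and the place where the hypotheses really enter, is the observation that every morphism of $\CM$ is a monomorphism (being a coretraction). First I would note that between any two objects of $J\downarrow X$ there is at most one morphism: if $u'\circ h=u=u'\circ h'$ then $h=h'$ since $u'$ is monic, and the only endomorphism of $(C,u)$ is the identity. Hence $J\downarrow X$ is a groupoid in which each connected component is a contractible groupoid, with all objects uniquely isomorphic and trivial automorphism groups. Two objects $(C,u)$ and $(C',u')$ lie in the same component precisely when $u$ and $u'$ represent the same subobject, so the set of components is exactly $\mathrm{Sub}X$, which is finite by assumption. Since the inclusion of a single object into its contractible component is a final functor, the colimit over each component is just $FU$ for a chosen representative $m\dd U\ra X$, and the total colimit is the coproduct
\[
(\mathrm{Lan}_J F)X \;\cong\; \sum_{U\preceq_m X} FU,
\]
which exists because $\mathrm{Sub}X$ is finite and $\CX$ has finite coproducts. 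The universal cocone supplies the coprojections $\mathrm{in}_m\dd FU\ra(\mathrm{Lan}_J F)X$, and the unit $\kappa_C\dd FC\ra(\mathrm{Lan}_J F)C$ is the coprojection $\mathrm{in}_{1_C}$ at the top subobject.

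It remains to read off the action on a morphism $f\dd X\ra Y$ in $\CM$. This is induced by post-composition with $f$, that is, by the functor $J\downarrow X\ra J\downarrow Y$ sending $(C,u)$ to $(C,f\circ u)$; since $f\in\CM$ and $\CM$ is closed under composition, $f\circ m$ is again a representative of a subobject of $Y$. On summands this functor carries the $U$-summand of $X$ indexed by $m$ identically onto the $U$-summand of $Y$ indexed by $f\circ m$, so that $(\mathrm{Lan}_J F)f\circ\mathrm{in}_m=\mathrm{in}_{fm}$, which is exactly the commuting triangle asserted. Because a map out of a coproduct is determined by its restrictions to the coprojections, this simultaneously defines $(\mathrm{Lan}_J F)f$ and establishes the formula.

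The only genuinely delicate point is the structural description of $J\downarrow X$; everything after that is the formal machinery of pointwise Kan extensions. I would therefore expect the main obstacle to be essentially bookkeeping: making precise that each component of $J\downarrow X$ is contractible and that the components biject with $\mathrm{Sub}X$, both of which hinge on the monicity of the coretractions in $\CM$. Once that is in place, existence of the extension, the object formula, and the morphism formula all follow at once.
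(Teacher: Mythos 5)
Your proposal is correct and follows essentially the same route as the paper: both compute the pointwise Kan extension as the colimit over the comma category $J/X$ and reduce it to a coproduct over $\mathrm{Sub}X$ by showing the discrete category of chosen representatives is final there (the paper phrases this as an adjoint equivalence with the discrete category, you phrase it as the components being contractible groupoids --- the same observation, resting on monicity of the coretractions in $\CM$). Your additional explicit verification of the formula $(\mathrm{Lan}_JF)f\circ\mathrm{in}_m=\mathrm{in}_{fm}$ via post-composition on comma categories is the standard functoriality the paper leaves implicit.
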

\begin{proof}
The inclusion $U\preceq_m X \mapsto (U, m \dd JU\ra X)$ 
of the discrete category on the set $\{U \dd U\preceq_m X\}$ into the comma category $J/X$ has a left adjoint, taking $(V,f\dd JV\ra X)$ to $V\preceq_f X$.
It is an adjoint equivalence. It follows that the colimit of 
$J/X \stackrel{\mathrm{dom}}\lra \CI \stackrel{F}\lra \CX$
can be calculated by restricting along the inclusion and so is the coproduct displayed.
\end{proof}

\begin{lemma}
Each functor $T\dd \CM \ra \CX$ has a pointwise right Kan extension
\begin{equation}
\begin{aligned}
\xymatrix{
\CM \ar[rd]_{T}^(0.5){\phantom{a}}="1" \ar[rr]^{I}  && \CK \ar[ld]^{\mathrm{Ran}_IT}_(0.5){\phantom{a}}="2" \ar@{<=}"1";"2"^-{\rho_T}
\\
& \CX 
}
\end{aligned}
\end{equation}  
along $I\dd \CM \ra \CK$ defined on objects by:
$$(\mathrm{Ran}_IT)X = \prod_{U\preceq_m X}TU \ .$$
For morphisms $k \dd X\ra Y$ in $\CK$, the square
\begin{equation}
\begin{aligned}
\xymatrix{
(\mathrm{Ran}_IT)X \ar[rr]^-{\mathrm{pr}_{m}} \ar[d]_-{(\mathrm{Ran}_IT)k} && TU \ar[d]^-{T\ell} \\
(\mathrm{Ran}_IT)Y \ar[rr]_-{\mathrm{pr}_n} && TV 
}
\end{aligned}
\end{equation}
commutes, where $n^*\circ k = \ell \circ m^*$ with $n, m, \ell \in \CM$.
\end{lemma}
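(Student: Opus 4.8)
The plan is to dualize the proof of the preceding Lemma~\ref{leftKan}. Recall that the value at $X\in\CK$ of the pointwise right Kan extension is the limit of the composite $X/I \stackrel{\mathrm{cod}}\lra \CM \stackrel{T}\lra \CX$, where $X/I$ is the comma category whose objects are pairs $(V,f)$ with $f\dd X\ra IV$ in $\CK$ and whose morphisms $(V,f)\ra (V',f')$ are the $g\dd V\ra V'$ in $\CM$ with $g\circ f=f'$, and where $\mathrm{cod}$ sends $(V,f)$ to $V$. First I would embed the discrete category on the set $\{U\dd U\preceq_m X\}$ of subobjects of $X$ into $X/I$ by the assignment $(U\preceq_m X)\mapsto (U,m^*\dd X\ra U)$; since $\mathrm{cod}$ carries this object to $U$, the diagram $T\circ\mathrm{cod}$ restricts along this embedding $E$ to the assignment $U\mapsto TU$.

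The key step is to prove that $E$ is initial, so that the limit over $X/I$ may be computed by restriction to the discrete subcategory, whence it is the product $\prod_{U\preceq_m X}TU$; this product is finite, and so exists in $\CX$, because $\mathrm{Sub}X$ is finite by hypothesis. Concretely, for an object $(V,f)$ of $X/I$ I would factor $f=\ell\circ m^*$ with $\ell,m\in\CM$ using the factorization system $(\CM^*,\CM)$; then $\ell\dd E(U)\ra (V,f)$ is an object of the comma category $E/(V,f)$ for the subobject $U\preceq_m X$, and uniqueness of the factorization up to unique isomorphism shows that every object of $E/(V,f)$ is uniquely isomorphic to this one. Hence each $E/(V,f)$ is non-empty and connected, which is exactly the initiality of $E$. (Equivalently, $(V,f)\mapsto (U\preceq_m X)$ is left adjoint to $E$, exhibiting $\mathrm{Sub}X$ as a reflective subcategory of $X/I$.) This is the dual of the adjoint equivalence exploited in Lemma~\ref{leftKan}; the one genuine difference is that here $X/I$ is strictly larger than $\mathrm{Sub}X$ — unlike $J/X$ in the left Kan case, which is already equivalent to the subobject poset — so I must invoke initiality rather than an equivalence of categories, and this is where the real work lies.

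Finally I would read off the action on morphisms from the universal property. A morphism $k\dd X\ra Y$ in $\CK$ induces, by precomposition, a functor $k^{\sharp}\dd Y/I\ra X/I$, $(V,g)\mapsto (V,g\circ k)$, commuting with $\mathrm{cod}$; restricting the limit cone for $X$ along $k^{\sharp}$ yields a cone over $Y/I$, and hence the unique induced map $(\mathrm{Ran}_IT)k$ is characterised by $\mathrm{pr}_{(V,g)}\circ(\mathrm{Ran}_IT)k=\mathrm{pr}_{(V,g\circ k)}$. Taking $g=n^*$ for a subobject $V\preceq_n Y$, the stated projection $\mathrm{pr}_n$ is $\mathrm{pr}_{(V,n^*)}$, so that $\mathrm{pr}_n\circ(\mathrm{Ran}_IT)k=\mathrm{pr}_{(V,n^*\circ k)}$. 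Writing the factorization $n^*\circ k=\ell\circ m^*$ produces the subobject $U\preceq_m X$ together with $\ell\dd U\ra V$ in $\CM$, and the comma-category morphism $\ell\dd E(U)\ra (V,n^*\circ k)$ forces $\mathrm{pr}_{(V,n^*\circ k)}=T\ell\circ\mathrm{pr}_{(U,m^*)}=T\ell\circ\mathrm{pr}_m$ by compatibility of the limit cone with morphisms of $X/I$. Combining these identities gives $\mathrm{pr}_n\circ(\mathrm{Ran}_IT)k=T\ell\circ\mathrm{pr}_m$, which is precisely the commuting square asserted in the statement. As indicated, the main obstacle is the initiality of the subobject inclusion; everything else is a formal transcription of the limit's universal property.
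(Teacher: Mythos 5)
Your proof is correct and follows essentially the same route as the paper: both compute $(\mathrm{Ran}_IT)X$ as the limit of $T\circ\mathrm{cod}$ over the comma category $X/I$ and use the $(\CM^*,\CM)$-factorization to show that the discrete subcategory of subobjects, embedded via $U\mapsto(m^*\dd X\ra IU,U)$, is initial --- the paper by exhibiting a right adjoint to this inclusion, you by checking directly that each $E/(V,f)$ is non-empty and connected. The only slip is your parenthetical ``equivalently'': the factorization assignment $(V,f)\mapsto(U\preceq_m X)$ is a \emph{right} adjoint to $E$ (the universal morphism goes $E(U)\ra(V,f)$), so $E$ is initial because it is a left adjoint rather than because $\mathrm{Sub}X$ is reflective in $X/I$; this does not affect your main argument, which correctly verifies the comma-category criterion.
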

\begin{proof}
The functor $U\preceq_m X \mapsto (m^* \dd X\ra IU, U)$ from the discrete category 
on the set $\{U \dd U\preceq_m X\}$ into the comma category
$X/I$ has a right adjoint, taking $(n\circ \ell^* \dd X\ra IA,A)$ to $W\preceq_n X$, and so is initial. 
It follows that the limit of 
$X/I \stackrel{\mathrm{cod}}\lra \CM \stackrel{T}\lra \CX$
can be calculated as the product displayed.
\end{proof}

This gives the two adjunctions
\begin{equation}\label{twoadj} 
\xymatrix @R-3mm {
[\CI,\CX] \ar@<1.5ex>[rr]^{\mathrm{Lan}_J} \ar@{}[rr]|-{\bot} && [\CM,\CX]
\ar@<1.5ex>[rr]^{\mathrm{Ran}_I}  \ar@{}[rr]|-{\top} \ar@<1.5ex>[ll]^{[J,1]} && [\CK,\CX]
\ar@<1.5ex>[ll]^{[I,1]}  \ .
}
\end{equation}

\begin{proposition}
The functor $[I,1] : [\CK,\CX]\ra [\CM,\CX]$ is comonadic.
\end{proposition}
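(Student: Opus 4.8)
The plan is to apply the comonadic form of Beck's theorem to $U = [I,1]\dd [\CK,\CX] \ra [\CM,\CX]$. We already have its right adjoint $\mathrm{Ran}_I$ from the adjunction in \eqref{twoadj}, so it remains to check that $U$ reflects isomorphisms and that $U$ creates equalizers of those parallel pairs which $U$ sends to pairs admitting a split equalizer. The first condition is immediate: since $I$ is the identity on objects, a natural transformation $\alpha$ between functors $\CK \ra \CX$ and its restriction $U\alpha$ have literally the same components, and a natural transformation is invertible exactly when each of its components is; hence $U\alpha$ invertible forces $\alpha$ invertible.

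The substance is therefore the equalizer condition, and here the only real point is that $\CX$ is assumed merely to have finite products and coproducts, not general equalizers. First I would recall that split equalizers are absolute, that is, preserved by every functor; in particular each evaluation functor $[\CM,\CX] \ra \CX$ sends a split equalizer to a split equalizer. Thus if $f,g\dd T\rightrightarrows S$ in $[\CK,\CX]$ is a pair whose restriction $Uf, Ug$ admits a split equalizer in $[\CM,\CX]$, then for every object $A$ the pair $f_A, g_A\dd TA \rightrightarrows SA$ carries a split equalizer $e_A\dd EA \ra TA$ in $\CX$. These objects $EA$ exist despite $\CX$ lacking arbitrary equalizers, precisely because a split equalizer is an absolute limit.

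Next I would promote $E$ to a functor on all of $\CK$. Given $k\dd A \ra B$ in $\CK$, naturality of $f$ and $g$ together with $f_A\circ e_A = g_A\circ e_A$ give $f_B\circ Tk\circ e_A = Sk\circ f_A\circ e_A = Sk\circ g_A\circ e_A = g_B\circ Tk\circ e_A$, so $Tk\circ e_A$ equalizes $f_B, g_B$ and therefore factors uniquely through the equalizer $e_B$; this factorization defines $Ek$. Uniqueness of such factorizations makes $E$ a functor, makes $e\dd E \Rightarrow T$ a natural transformation in $[\CK,\CX]$, and shows $e$ is the equalizer of $f,g$ there: any $\phi\dd W \Rightarrow T$ with $f\phi = g\phi$ factors pointwise and uniquely through each $e_A$, and the resulting components assemble into a unique natural transformation because each $e_B$, being a split monomorphism, is cancellable. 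Since $I$ is the identity on objects, $UE$ agrees componentwise with the original split equalizer and $U$ preserves it; the lift $E$ is moreover forced, as for $k\dd A\ra B$ the relation $e_B\circ Ek = Tk\circ e_A$ together with cancellability of $e_B$ pins down $Ek$. Thus $U$ creates equalizers of such pairs.

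With the right adjoint $\mathrm{Ran}_I$, the reflection of isomorphisms, and the creation of these equalizers in hand, Beck's theorem yields that $[I,1]$ is comonadic, so that $[\CK,\CX]$ is equivalent to the category of $H$-coalgebras. I expect the only genuine obstacle to be the equalizer condition, and within it the fact that $\CX$ need not have all equalizers; the resolution is the absoluteness of split equalizers, which guarantees that the needed pointwise equalizers exist in $\CX$ and are automatically computed pointwise in the functor category, so that $U$ both preserves and creates them.
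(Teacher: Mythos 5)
Your proof is correct, but it takes a genuinely different route from the paper. The paper simply observes that $I$ is bijective on objects, hence Cauchy dense, and invokes Proposition~\ref{Cdmonadicity} from Appendix~\ref{gr}, whose proof establishes conservativity of $[H,1]$ by showing the counit of $\mathrm{Lan}_H\dashv [H,1]$ is a strong epimorphism and then applies an ``easy case'' of Beck's theorem. You instead verify the precise comonadicity theorem directly for $U=[I,1]$: conservativity is immediate because $I$ is the identity on objects (so $\alpha$ and $U\alpha$ have the same components), and creation of $U$-split equalizers is handled by the absoluteness of split equalizers, which supplies the needed pointwise equalizers in $\CX$ and then lets you propagate them to a (strictly forced) lift over all of $\CK$ using cancellability of the split monomorphisms $e_B$. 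What your approach buys is precision about hypotheses: Proposition~\ref{Cdmonadicity} is stated for cocomplete $\CX$, whereas Section~\ref{tpmc} only assumes finite products and coproducts, and $[\CK,\CX]$ need not have general equalizers; your argument needs nothing beyond the existence of the right adjoint $\mathrm{Ran}_I$ (which the finiteness of $\mathrm{Sub}A$ guarantees) because split equalizers exist in any category. What the paper's approach buys is generality and economy: Cauchy density gives monadicity and comonadicity simultaneously, together with preservation and reflection of limits, colimits and strong epimorphisms, all of which are reused elsewhere (e.g.\ in Proposition~\ref{Mirror}). Both arguments are sound; yours is the more self-contained for this particular statement.
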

\begin{proof}
The functor $I$ is Cauchy dense since it is bijective on objects.
The result follows by Proposition~\ref{Cdmonadicity}.
\end{proof}

\begin{proposition}
  If $\CX$ is an additive category with finite direct sums and
  split idempotents, then $\Lan_j\colon
  [\CI,\CX]\to[\CM,\CX]$ is comonadic.
\end{proposition}

\proof
Since the unit of the adjunction $\Lan_J\dashv [J,1]$ is a pointwise
coretraction $FA\to FA\oplus \bigoplus_{U<A} FU$, and hence a strong
monomorphism, the left adjoint $\Lan_J$ is conservative, so it will be
comonadic provided that it preserves certain equalizers. In fact, it preserves all finite limits. Since limits in $[\CM,\CX]$ are formed
pointwise, it suffices to see that each 
$$\mathrm{ev}_A\circ \mathrm{Lan}_J \dd [\CI,\CX]\lra \CX$$
preserves finite limits where $\mathrm{ev}_A \dd [\CM,\CX]\ra \CX$ is evaluation
at $A\in \CM$. By Lemma~\ref{leftKan}, 
$$\mathrm{ev}_A\circ \mathrm{Lan}_J \cong \bigoplus_{U\preceq A}\mathrm{ev}_U \ .$$
Each evaluation $\mathrm{ev}_U$ preserves limits and so their direct
sum does so. \endproof    

\begin{theorem}\label{specialthm}
For any additive category $\CX$
with finite direct sums and split idempotents,
the functor
\[ \hat{N}\colon [\CI,\CX] \lra [\CK,\CX] \]
is an equivalence of categories.
\end{theorem}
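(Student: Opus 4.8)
The plan is to exhibit the functor \eqref{hatX} as a composite of two comparison-type functors associated with the comonads $G$ and $H$ of \eqref{twocmnd} and the comonad morphism $\Theta$ of \eqref{Thetacmpnt}, and to show that each composite factor is an equivalence. Since $\widehat{N}$ has already been identified, up to isomorphism, with $\bar{\Theta}$ precomposed with the comparison functor $\mathrm{Comp}_G\dd [\CI,\CX]\ra [\CM,\CX]^G$, it suffices to prove that both $\bar{\Theta}$ and $\mathrm{Comp}_G$ are equivalences.

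First I would dispose of $\bar{\Theta}$, which uses only results already in hand. Because $\CX$ is additive, Proposition~\ref{invTheta} makes $\Theta\dd G\Rightarrow H$ an isomorphism of comonads; an isomorphism of comonads transports coalgebra structure maps back and forth bijectively, so the induced functor $\Theta_{*}\dd [\CM,\CX]^G\ra [\CM,\CX]^H$ is an isomorphism of categories over $[\CM,\CX]$. The Proposition establishing that $[I,1]$ is comonadic says precisely that the comparison $[\CK,\CX]\ra [\CM,\CX]^H$ is an equivalence, so that $\bar{\Theta}$, being the composite of $\Theta_{*}$ with a pseudo-inverse of that comparison, is an equivalence.

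The main obstacle is the remaining claim that $\mathrm{Comp}_G$ is an equivalence, that is, that the left Kan extension $\mathrm{Lan}_J$ is comonadic. This is genuinely different from the comonadicity of $[I,1]$: there the comonadic functor is a restriction, whereas here it is a left adjoint that is a left Kan extension, so Proposition~\ref{Cdmonadicity} does not apply verbatim. I would instead verify the hypotheses of the dual of Beck's theorem. Conservativity of $\mathrm{Lan}_J$ is where additivity enters and is immediate: the component at $A$ of $\mathrm{Lan}_J\phi$, for $\phi\dd F\ra F'$ in $[\CI,\CX]$, is the biproduct $\bigoplus_{U\preceq_m A}\phi_U$ over the finitely many subobjects of $A$ (finiteness of $\mathrm{Sub}A$ is what guarantees these coproducts are finite biproducts), and such a block-diagonal map is invertible precisely when every $\phi_U$ is; taking $A=U$ shows that $\mathrm{Lan}_J\phi$ invertible forces $\phi$ invertible.

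For the descent half, the equalizers demanded by the dual Beck theorem are those of $\mathrm{Lan}_J$-cosplit pairs; these are split, hence absolute, limits, so they exist and are preserved because $\CX$, and therefore $[\CI,\CX]$, is Cauchy complete, having splitting idempotents and finite direct sums. Thus $\mathrm{Lan}_J$ is conservative and preserves the relevant absolute equalizers, so it is comonadic and $\mathrm{Comp}_G$ is an equivalence; this is the analogue, for the adjunction $\mathrm{Lan}_J\dashv [J,1]$, of the Cauchy-density criterion of Appendix~\ref{gr} applied to the bijective-on-objects functor $J$. With $\mathrm{Comp}_G$ and $\bar{\Theta}$ both equivalences, $\widehat{N}\cong \bar{\Theta}\circ\mathrm{Comp}_G$ is an equivalence, yielding $[\CI,\CX]\simeq [\CK,\CX]$.
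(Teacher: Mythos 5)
Your overall strategy coincides with the paper's: factor $\widehat N$ through the comparison into $G$-coalgebras, use Proposition~\ref{invTheta} to turn $H$-coalgebras into $G$-coalgebras, invoke the comonadicity of $[I,1]$, and thereby reduce everything to the comonadicity of $\mathrm{Lan}_J$. Your conservativity argument is sound: the component of $\mathrm{Lan}_J\phi$ at $A$ is block-diagonal, and a block-diagonal map invertible forces each block invertible; this is essentially the paper's observation that the unit $FA\ra FA\oplus\bigoplus_{U\prec A}FU$ is a coretraction.

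The gap is in your verification of the Beck equalizer condition. A $\mathrm{Lan}_J$-cosplit pair $(f,g)$ in $[\CI,\CX]$ is, by definition, one whose \emph{image} $(\mathrm{Lan}_Jf,\mathrm{Lan}_Jg)$ admits a split equalizer in $[\CM,\CX]$; nothing makes $(f,g)$ itself part of a split equalizer diagram in $[\CI,\CX]$. So the assertion that ``these are split, hence absolute, limits'' and therefore exist and are preserved by Cauchy completeness does not follow: the splitting data lives downstairs, and transporting it upstairs (for instance, producing an idempotent on $F$ rather than on $\mathrm{Lan}_JF$ whose splitting would compute the equalizer) is exactly the nontrivial content of comonadicity. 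The paper closes this step differently: it shows $\mathrm{Lan}_J$ preserves \emph{all} finite limits, because limits in $[\CM,\CX]$ are computed pointwise and, by Lemma~\ref{leftKan}, $\mathrm{ev}_A\circ\mathrm{Lan}_J\cong\bigoplus_{U\preceq A}\mathrm{ev}_U$ is a finite direct sum of limit-preserving evaluation functors (finiteness of $\mathrm{Sub}A$ and additivity of $\CX$ are what make the coproduct a finite direct sum). You need to replace the ``split, hence absolute'' step with an argument of this kind, or otherwise establish both the existence in $[\CI,\CX]$ and the preservation of the equalizers that the dual Beck theorem demands.
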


\proof
By the explicit formula for $\hat{N}$ of Theorem~\ref{combadj}, the
diagram
\[ \xymatrix{
    [\CI,\CX] \ar[rr]^{\hat{N}} \ar[dr]_{\Lan_J} && [\CK,\CX]
    \ar[dl]^{[I,1]} \\
    & [\CM,\CX] } \]
commutes up to isomorphism; thus since the downward arrows are
comonadic, $\hat{N}$ is induced by a morphism $\Theta\colon G\to H$ of
the comonads $G$ and $H$ induced by the adjunctions
$\Lan_J\dashv[J,1]$ and $[I,1]\dashv \Ran_I$,
respectively. Furthermore, $\hat{N}$ will be an equivalence if (and
only if) $\Theta$ is invertible. Now $\Theta$ is invertible just when
each component $\Theta_F\colon \Lan_J(FJ)\to \Ran_J(F)J$ is
invertible, and such a component is given by the composite
\[ \xymatrix{
    \hat{N}(FJ)I \ar[rr]^-{\eta'_{\hat{N}(FJ)}I} && \Ran_I(\hat{N}(FJ)I)I
    \ar[r]^-{\cong} & \Ran_I(\Lan_J(FJ))I \ar[rr]^-{\Ran_I(\epsilon)I} &&
    \Ran_I(F)I } \]
where $\eta'$ is the unit of the adjunction $[I,1]\dashv \Ran_I$ and
$\epsilon$ is the counit of the adjunction $\Lan_J\dashv [J,1]$. More
explicitly, this has component at $A$ given by a map
\[ \xymatrix{
    \sum\limits_{U\preceq_m A} FU\ar[rr]^-{(\Theta_F)_A} && \prod\limits_{V\preceq_n A} FV } \]
which can in turn be specified by giving a map $FU\to FV$ for each choice of
$m\colon U\preceq A$ and $n\colon V\preceq A$; and this is given by $F(n^*m)$
if $n^*m\in\CM$ and $0$ otherwise.

To conclude, we use Assumption~\ref{ass6} and Assumption~\ref{ass5} to deduce that each relation $R_A$ has a linear refinement on $\mathrm{Sub}(A)$.
So we can list all the subobjects as 
$$U_0\xrightarrow{1_A=m_0}A, U_1\xrightarrow{m_1}A, \dots , U_n\xrightarrow{m_n}A$$ 
such that $m_i R_A m_j$ implies $i \le j$.
Then the components of $\Theta$ are represented by an
upper-triangular matrix with identity morphisms in the main diagonal.
Since $\CX$ is additive (including existence of additive inverses), such
matrices are invertible.
\endproof 

\begin{remark} In case $\CX$ is a protomodular $1/\mathrm{Set}$-category 
(see Section~\ref{semiabelian} for references),
morphisms $f \dd A+B \ra A\times B$, with 
$\mathrm{pr}_1\circ f \circ \mathrm{in}_1 = 1_A$, 
$\mathrm{pr}_2\circ f \circ \mathrm{in}_2 = 1_B$ and $\mathrm{pr}_2\circ f \circ \mathrm{in}_1 = 0$, 
are strong epimorphisms. Even though $\Lan_J$ is no
longer comonadic, the comonad morphism
$\Theta\colon G\to H$ can still be defined, and it is a strong epimorphism.
We would like this to be the basis of a proof that $\widetilde{N}$ (and hence 
$\widetilde{M}$ using Corollary~\ref{NgivesM}) 
is crudely monadic but we do not know how. 
We will give a proof that $\widetilde{M}$ is crudely monadic in Section~\ref{semiabelian}
under the extra Assumption~\ref{ass7}.
\end{remark}

Combining Corollary~\ref{NgivesM} with Theorem~\ref{specialthm}, we obtain: 

\begin{theorem}\label{settingmainthm}
In the setting of Section~\ref{setting}, the adjunction \eqref{settingkeradj} is an equivalence for any additive category $\CX$ in which idempotents split and finite direct sums exist. 
\end{theorem}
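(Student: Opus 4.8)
The plan is to assemble the statement from the two principal results already established in Sections~\ref{reducto} and \ref{tpmc}, rather than argue directly. The point is that Theorem~\ref{specialthm} resolves the special case in which $\CR$ consists only of invertible morphisms (equivalently $\CP = \CK$), while Corollary~\ref{NgivesM} supplies the reduction machinery that transports an equivalence from that special case back to the general $M$-situation. So the whole task reduces to checking that the hypotheses of Corollary~\ref{NgivesM} are met.

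First I would invoke Theorem~\ref{specialthm} to conclude that the functor $\widehat{N}$ of \eqref{hatX} is an equivalence $[\CI,\CX] \simeq [\CK,\CX]$, using that $\CX$ is additive with finite direct sums and splitting idempotents. Now $\widehat{N}$ is the left adjoint of the kernel adjunction associated with $N$ (the analogue of \eqref{settingkeradj} in which $\CP$, $\CD$, $M$ are replaced by $\CK$, $\mathrm{p}_*\CI$, $N$). I would then use the standard fact that an adjunction one of whose legs is an equivalence is automatically an adjoint equivalence, so that the right adjoint $\widetilde{N}$ is an equivalence as well.

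Next I would feed this into Corollary~\ref{NgivesM}, whose hypothesis is exactly that $\widetilde{N}$ is an equivalence (or crudely monadic). Its conclusion is that $\widetilde{M}$ is an equivalence. Finally, $\widetilde{M}$ is the right adjoint in \eqref{settingkeradj}, so by the same adjoint-equivalence principle its being an equivalence forces the left adjoint $\widehat{M}$ to be one too; hence the adjunction \eqref{settingkeradj} is an (adjoint) equivalence, as claimed.

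For this theorem the real content has already been discharged upstream, so the only delicate point in the present argument is the passage between a functor and its adjoint: I should take care to record that ``one leg of an adjunction is an equivalence'' genuinely upgrades to an adjoint equivalence, so that Theorem~\ref{specialthm}'s statement about $\widehat{N}$ legitimately yields the hypothesis about $\widetilde{N}$ demanded by Corollary~\ref{NgivesM}. The substantive difficulties live elsewhere --- in Proposition~\ref{twocoends}'s verification that the comparison $\theta$ induces invertible coends (where the unique factorization of Assumption~\ref{ass1} does the essential work) and in Proposition~\ref{invTheta}'s inversion of the upper-triangular matrix (where additivity, including the existence of additive inverses, is indispensable).
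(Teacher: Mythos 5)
Your proposal is correct and follows essentially the same route as the paper, whose entire proof is the single line ``Combining Corollary~\ref{NgivesM} with Theorem~\ref{specialthm}, we obtain'' the result. You merely make explicit the small bookkeeping step (left implicit in the paper) that an adjunction with one leg an equivalence is an adjoint equivalence, so that Theorem~\ref{specialthm}'s conclusion about $\widehat{N}$ supplies the hypothesis about $\widetilde{N}$ required by Corollary~\ref{NgivesM}.
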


\section{Examples of Theorem~\ref{settingmainthm}}\label{exx}

\begin{example}\label{babyDPK} 
We begin with a baby version of the Dold-Puppe-Kan Theorem.
Let $\mathrm{Pt}\CX$ denote the category whose objects are split epimorphisms in $\CX$, the morphisms are morphisms of the epimorphisms
which commute with the splittings; this is what Bourn \cite{Bourn91} calls the category of points in $\CX$.  
Take $\CP$ to be underlying category of the 2-category which is the free-living adjunction 
$\mu^*\dashv \mu$ with identity counit $\mu^*\circ \mu = 1$.
So $[\CP,\CX] \cong \mathrm{Pt}\CX$ and we have our $(-)^*$ functor choosing the left adjoint.  
Let $\CM$ consist of all the monomorphisms. 
Then $\CR$ contains only the identities. 
Theorem~\ref{settingmainthm} yields $$\mathrm{Pt}\CX \simeq \CX \times \CX \ .$$

This example also shows the necessity of $\CX$ having homs enriched in abelian groups (not merely commutative monoids). 
We need $\CX$ to have kernels of split epimorphisms and finite coproducts already. 
If we also ask that it have finite products then considering the split epimorphism $X\times Y\ra Y$ given by the projection, the counit 
is the canonical map $X+Y \ra X\times Y$, 
so if this is invertible we have hom enrichment in commutative monoids. 
Now considering the codiagonal $X+ X\ra X$, split by one of the 
injections, it is not hard to show that $1_X$ has an additive inverse. 
\end{example}

\begin{example}\label{DPK} (\cite{Dold, DoldPuppe, Kan}) Applying Theorem~\ref{settingmainthm} to 
Example~\ref{Ex2+} yields that $[\Delta_{\bot,\top},\CX]$ is equivalent to 
the category of non-negatively graded chain complexes in $\CX$.
\end{example}

\begin{example} 
Applying Theorem~\ref{settingmainthm} to Example~\ref{Ex3+} yields 
$[\mathbb{I},\CX] \simeq [\Delta_{\mathrm{inj}}^{\mathrm{op}},\CX]$, the category of semi-simplicial objects in $\CX$.
\end{example}
\bigskip

Here are some examples of the general type described in Example~\ref{Ex1+}.
\bigskip

\begin{example} A {\em (set) species} in the sense of Joyal \cite{Species} is a functor 
$F : \mathfrak{S} \lra \mathrm{Set}$
where $\mathfrak{S}$ is the category of sets and bijective functions. 
A {\em pointed-set species} is a functor 
$F : \mathfrak{S} \lra 1/\mathrm{Set}$.
An {\em $R$-module species} is a functor 
$F : \mathfrak{S} \lra \mathrm{Mod}^R$; the case where $R$ is
a field is the basic situation of \cite{AnalFunct}.

Following \cite{CEF2012}, we write $\mathrm{FI}$ for the category of finite sets
and injective functions. We then see that 
$\CM = \mathrm{FI}$ and $\CE = \mathfrak{S}$,
while $\CP = \mathrm{FI}\sharp$, the category of injective partial functions.  

\begin{corollary} \cite{CEF2012} The functor
$$\widehat{M} : [\mathfrak{S}, \mathrm{Mod}^R] \lra [\mathrm{FI}\sharp, \mathrm{Mod}^R]$$
is an equivalence of categories.
\end{corollary}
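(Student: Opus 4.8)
The plan is to read off this corollary as the instance of Theorem~\ref{settingmainthm} determined by the partial-map construction of Example~\ref{Ex1+}. First I would fix the data: take $\CA = \mathrm{FI}$, the category of finite sets and injective functions, with the factorization system in which $\CM$ is the class of all morphisms (every injection) and $\CE = \mathfrak{S}$ is the class of isomorphisms, namely the bijections. Because each morphism of $\mathrm{FI}$ is already monic, the standing requirement in Example~\ref{Ex1+} that the members of $\CM$ be monomorphisms is automatic, and the induced $\CP = \mathrm{Par}\,\mathrm{FI}$ is exactly $\mathrm{FI}\sharp$, since a partial map $X \lla U \lra Y$ with left leg in $\CM$ has both legs injective and so is an injective partial function, with composition by pullback.

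Next I would discharge the hypotheses of Example~\ref{Ex1+}. The pullback of two injections $U \ra A$ and $V \ra A$ is their intersection inside $A$, again a finite set injecting into each factor, so pullbacks of morphisms in $\CM$ along arbitrary morphisms exist and remain in $\CM$. Every finite set has only finitely many subsets, hence finitely many $\CM$-subobjects, giving Assumption~\ref{ass5}. Example~\ref{Ex1+} then certifies Assumptions~\ref{ass1}--\ref{ass5} and identifies $\CR = \CE = \mathfrak{S}$. I would note the pleasant simplification that here $\CR = \CI$ and $\CP = \CK$: every injective partial function factors as $m \circ n^*$ with $m,n \in \CM$, so the reduction of Section~\ref{reducto} is trivial and one could even invoke Theorem~\ref{specialthm} directly to obtain $[\mathfrak{S},\CX] \simeq [\mathrm{FI}\sharp,\CX]$ via \eqref{hatX}.

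Finally, since $\mathrm{Mod}^R$ is additive with finite direct sums and splitting idempotents, Theorem~\ref{settingmainthm} applies and makes the adjunction \eqref{settingkeradj} an equivalence, with left-to-right leg $\widehat{M}$. It remains only to match the endpoints: as $\CR = \mathfrak{S}$, the category $\CD$ is $\mathfrak{S}$ with zero morphisms freely adjoined, so a zero-preserving functor $\CD \ra \mathrm{Mod}^R$ is the same as a functor $\mathfrak{S} \ra \mathrm{Mod}^R$ and $[\CD,\mathrm{Mod}^R]_{\mathrm{pt}} \cong [\mathfrak{S},\mathrm{Mod}^R]$. Under this identification \eqref{settingkeradj} is exactly the claimed equivalence $\widehat{M}\dd [\mathfrak{S},\mathrm{Mod}^R] \ra [\mathrm{FI}\sharp,\mathrm{Mod}^R]$. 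The only genuinely non-formal point is the identification of $\mathrm{Par}\,\mathrm{FI}$ with $\mathrm{FI}\sharp$ and of the class of isomorphisms with $\mathfrak{S}$; once these are in place the result is a pure application of the established machinery.
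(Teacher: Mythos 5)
Your proposal is correct and follows exactly the paper's route: the corollary is the instance of Example~\ref{Ex1+} and Theorem~\ref{settingmainthm} with $\CA = \mathrm{FI}$, $\CM$ all morphisms, $\CE = \CR = \mathfrak{S}$, $\CP = \mathrm{Par}\,\mathrm{FI} = \mathrm{FI}\sharp$, and $\CD$ the free adjunction of zero morphisms to $\mathfrak{S}$, so that $[\CD,\mathrm{Mod}^R]_{\mathrm{pt}} \cong [\mathfrak{S},\mathrm{Mod}^R]$. Your added remark that here $\CP = \CK$ and the reduction of Section~\ref{reducto} is vacuous, so Theorem~\ref{specialthm} applies directly, is a correct (and pleasant) observation but not a departure from the paper's argument.
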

\end{example}

\begin{example}\label{FLI(q)}
Another example relevant to \cite{repfgl} is the category $\CA = \mathrm{FLI}(q)$
of finite vector spaces over the field $\mathbb{F}_q$ of cardinality 
$q$ (a prime power) and injective linear functions.
Let $\CE = \mathrm{GL}(q)$ be the category (groupoid) of finite
$\mathbb{F}_q$-vector spaces and bijective linear functions. 
Then $\CP = \mathrm{FLI}(q)\sharp$ is the
category of finite $\mathbb{F}_q$-vector spaces and injective 
partial linear functions.

\begin{corollary} The functor
$$\widehat{M} : [\mathrm{GL}(q), \mathrm{Mod}^R] \lra [\mathrm{FLI}(q)\sharp, \mathrm{Mod}^R]$$
is an equivalence of categories.
\end{corollary}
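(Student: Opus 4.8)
The plan is to exhibit this corollary as a direct instance of the partial-map equivalence \eqref{parequiv}, hence of Theorem~\ref{settingmainthm}, by checking that $\CA = \mathrm{FLI}(q)$ satisfies the hypotheses of Example~\ref{Ex1+}. I would take the factorization system $(\CE,\CM)$ on $\mathrm{FLI}(q)$ in which $\CM$ consists of all the (injective linear) morphisms and $\CE = \mathrm{GL}(q)$ consists of the isomorphisms: every injective linear $f\dd U\ra A$ factors trivially as $f\circ 1_U$, and orthogonality $\CE\perp\CM$ is automatic because $\CE$ consists of isomorphisms. With this choice $\CP = \mathrm{Par}\,\mathrm{FLI}(q) = \mathrm{FLI}(q)\sharp$ is the category of injective partial linear functions, exactly as described in Example~\ref{FLI(q)}, and Example~\ref{Ex1+} then identifies $\CR = \CE = \mathrm{GL}(q)$.

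Next I would verify the remaining hypotheses of Example~\ref{Ex1+}. Each morphism of $\CM$ is a monomorphism, since injective linear maps are monic. For pullback-stability, the pullback in $\mathbb{F}_q$-vector spaces of a subspace inclusion $m\dd U\ra A$ along any injective linear $g\dd B\ra A$ is the inclusion $g^{-1}(\im m)\ra B$, which is again injective linear and so lies in $\CM$; a short check confirms that both projections of the pullback span are injective (using that $m$ and $g$ are each injective), so the pullback is genuinely formed in $\mathrm{FLI}(q)$. Finally, the $\CM$-subobjects of a finite-dimensional $\mathbb{F}_q$-space $A$ are precisely its linear subspaces, a finite set counted by the Gaussian binomial coefficients, which gives Assumption~\ref{ass5}. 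Thus Assumptions~\ref{ass1}--\ref{ass5} all hold by way of Example~\ref{Ex1+}.

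It then remains to cast the conclusion in the stated form. Since $\CR = \mathrm{GL}(q)$ is a groupoid, composition of its morphisms never leaves $\CR$, so the category $\CD$ obtained by freely adjoining zero morphisms is $\mathrm{p}_*\mathrm{GL}(q)$, whence $[\CD,\CX]_{\mathrm{pt}} \cong [\mathrm{GL}(q),\CX]$ for any locally pointed $\CX$. The category $\mathrm{Mod}^R$ of left $R$-modules is additive, has finite direct sums, and has splitting idempotents, so Theorem~\ref{settingmainthm} applies with $\CX = \mathrm{Mod}^R$ and shows that the functor $\widehat{M}$ of \eqref{settingkeradj}---which by Theorem~\ref{combadj} sends $F$ to $B\mapsto \bigoplus_{S\preceq_m B}FS$---is an equivalence $[\mathrm{GL}(q),\mathrm{Mod}^R]\simeq [\mathrm{FLI}(q)\sharp,\mathrm{Mod}^R]$.

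There is essentially no deep obstacle here: all of the real work resides in the general machinery behind Theorem~\ref{settingmainthm}, and the only points demanding attention are the verifications specific to $\mathrm{FLI}(q)$. The step I would be most careful about is the pullback-stability one, since it is precisely what guarantees that composition of partial maps in $\mathrm{FLI}(q)\sharp$ is well-defined by pullback and that $\CP$ really is $\mathrm{FLI}(q)\sharp$; together with the finiteness of the subspace lattice of a finite $\mathbb{F}_q$-space, it is what makes Example~\ref{Ex1+} genuinely applicable.
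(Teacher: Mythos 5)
Your proposal is correct and follows the paper's own route exactly: the paper presents this corollary as the instance of Example~\ref{Ex1+} (and hence of \eqref{parequiv} and Theorem~\ref{settingmainthm}) with $\CA = \mathrm{FLI}(q)$, $\CM$ all morphisms and $\CE = \mathrm{GL}(q)$ the trivial factorization system, leaving the routine verifications implicit. Your checks of pullback-stability of injections, finiteness of the subspace lattice, and the identification $[\CD,\CX]_{\mathrm{pt}}\cong[\mathrm{GL}(q),\CX]$ are precisely the details being elided, and they are all sound.
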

\end{example}   

\begin{example}\label{ParExx}
Here are a few examples of categories $\CA$ as in Example~\ref{Ex1+} to which 
Theorem~\ref{settingmainthm} applies with $\CE$ 
the surjections and $\CM$ the injections:
\begin{enumerate}[(a)]
\item the category of finite abelian groups and group morphisms;
\item the category of finite abelian $p$-groups and group morphisms;
\item the category of finite sets and all functions.
\item the category of finite sets and functions equipped with linear orders in their fibres
(this is the PROP for monoids).
\end{enumerate}
In example (d), not all pullbacks exist but all pullbacks along injective functions do. 
Theorem~\ref{settingmainthm} also applies to $\CM$ in place of $\CA$
in these examples. Then $\CE$ is replaced by the groupoid of
invertible morphisms in $\CA$. In case of example (a),
the paper \cite{HillarDarren2007} describes the groupoid being represented in $\CX$. 
\end{example}

\begin{example}
Consider the ``algebraic'' simplicial category $\Delta_+$ 
whose objects are all the natural numbers and 
whose morphisms $\xi : m\lra n$ are order-preserving functions 
$$\xi : \{0, 1, \dots , m-1\}\lra \{0, 1, \dots , n-1\} \ .$$
Put $\CA = \Delta_+^{\mathrm{op}}$.
Take $\CM$ in $\CA$ to consist of the surjections in $\Delta_+$.
Pushouts of surjections along arbitrary morphisms exist in 
$\Delta_+$. 
Then $\CE = \Delta_{+\mathrm{inj}}^{\mathrm{op}}$
and $\CP$ is the opposite of the category whose morphisms
$m\lra n$ are cospans $$m\stackrel{\xi}\lra r \stackrel{\sigma}\lla n$$
in $\Delta_+$ with $\sigma$ surjective. 
We could also take the ``topological'' simplicial category 
$\Delta$ (omit the object $0$) to obtain
a reinterpretation of the preoperads in $\CX$ in the sense of 
\cite{Berger1996}.
\end{example}

\begin{example}
Here is a rather trivial example involving $\Delta$.
Take $\CA$ to be the category of non-empty ordinals and morphisms
the order-preserving functions which preserve first element.
Let $\CM$ be the class of morphisms which are inclusions of
initial segments. This is part of a factorization system where
$\CE = \Delta_{\bot \neq \top}$ is the category of ordinals with 
distinct first and last element 
and morphisms the order-preserving functions which preserve first 
and last element. Sometimes $\CE$ is called the category of {\em intervals}; there is a duality isomorphism
$$\CE \cong \Delta^{\mathrm{op}} \ .$$ 
In this case, not only do we have the equivalence
$$[\CE, \CX] \simeq [\CP, \CX] $$
of Theorem~\ref{settingmainthm}, we actually also have an isomorphism
$$\CP \cong \CE \ .$$
\end{example}

\begin{example}
Take $\CA$ to be a (partially) ordered set with finite infima and the descending chain condition. 
Then every morphism is a monomorphism
and the strong epimorphisms are equalities. So $\CE$ is
the discrete category $\mathrm{ob}\CA$ on the set of
elements of the ordered set. The reader may like to contemplate
the case where $\CA$ is the set of strictly positive integers
ordered by division. 
\end{example} 

\section{When $\CX$ is semiabelian}\label{semiabelian}

Semiabelian categories include the category $\mathrm{Grp}$ of 
(not necessarily abelian) groups and group morphisms.
In \cite{Bourn07} Dominique Bourn gave a version of the Dold-Puppe-Kan Theorem (Example~\ref{DPK})
for the case where the codomain category $\CX$ was semiabelian.
In that case it asserted monadicity of the right adjoint in 
\begin{equation}\label{pttedkeradj} 
\xymatrix @R-3mm {
[\CD,\CX]_{\mathrm{pt}} \ar@<1.5ex>[rr]^{\widehat{M}} \ar@{}[rr]|-{\bot} && [\CP,\CX]
 \ar@<1.5ex>[ll]^{\widetilde{M}} \ .
}
\end{equation} 
In this section, we provide a version of this for $\CP$ as in Section~\ref{setting},
not only for $\Delta^{\mathrm{op}}$. 
However, we require an extra assumption related to idempotents.
Such conditions occur in Lawvere \cite{Law91} and Kudryavtseva-Mazorchuk \cite{KM09}.  

\begin{assumption}\label{ass7}
The maximal proper elements of $\mathrm{Sub}A$ can be listed $m_1, \dots, m_n$ 
such that the idempotents $c_i = m_i \circ m_i^*$ on $A$ satisfy 
$c_j\circ c_i\circ c_j=c_j\circ c_i$ for all $i< j$. 
\end{assumption}

Throughout we assume our category $\CX$ has zero morphisms (that is, has homs enriched in pointed sets).

We begin by providing a non-additive version of the material at the end of Appendix~\ref{idemp} on idempotents. 

\begin{proposition}
Suppose the category $\CX$ has kernels of idempotents.
Let $e,f$ be idempotents on an object $A$ of $\CX$.
If $e\circ f\circ e = e\circ f$ then the intersection of the kernels of $e$ and $f$ exists.
\end{proposition}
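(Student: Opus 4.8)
The plan is to show that the intersection $\ker e\cap\ker f$, characterised as the universal subobject of $A$ through which a map $x$ factors exactly when $e\circ x=0$ and $f\circ x=0$, can be built as a composite of two kernels of idempotents, with no recourse to additive structure. The whole force of the hypothesis $e\circ f\circ e=e\circ f$ is, as I read it, that it forces $f$ to restrict to an idempotent on $\ker e$.

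First I would form the kernel $k\dd K\to A$ of the idempotent $e$; this exists by assumption, is a monomorphism, and satisfies $e\circ k=0$. The \emph{crucial observation} is that $f\circ k$ factors through $k$: using the hypothesis in the form $e\circ f=e\circ f\circ e$ together with $e\circ k=0$, one computes $e\circ f\circ k=e\circ f\circ e\circ k=0$, so $f\circ k$ equalises $e$ and $0$ and hence factors uniquely as $f\circ k=k\circ\bar f$ for some $\bar f\dd K\to K$. A one-line cancellation, using that $k$ is monic and $f\circ f=f$, shows $\bar f$ is idempotent, since $k\circ\bar f\circ\bar f=f\circ k\circ\bar f=f\circ f\circ k=f\circ k=k\circ\bar f$.

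Next I would form the kernel $j\dd L\to K$ of the idempotent $\bar f$, again available by hypothesis, and claim that the composite monomorphism $k\circ j\dd L\to A$ is the required intersection. That $k\circ j$ factors through both $\ker e$ and $\ker f$ is immediate: $e\circ(k\circ j)=0$ since $e\circ k=0$, and $f\circ(k\circ j)=k\circ\bar f\circ j=0$ since $\bar f\circ j=0$. For the universal property, given any $x$ with $e\circ x=0$ and $f\circ x=0$, factor $x=k\circ x'$ through $\ker e$; then $k\circ\bar f\circ x'=f\circ k\circ x'=f\circ x=0$ and $k$ monic give $\bar f\circ x'=0$, so $x'$ factors through $j=\ker\bar f$, whence $x$ factors through $k\circ j$, uniquely because $k\circ j$ is monic.

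The only real content is the crucial observation above: once one recognises that $e\circ f\circ e=e\circ f$ says precisely that $f$ carries $\ker e$ into itself and that the induced endomorphism $\bar f$ is automatically idempotent, everything else is a routine verification of the universal property of a meet of subobjects. I expect the single point requiring care to be the bookkeeping in that universal property, namely checking that the factorisations through $k$ and then through $j$ compose to the unique factorisation through $k\circ j$.
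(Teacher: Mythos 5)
Your proof is correct and follows essentially the same route as the paper's: restrict $f$ along the kernel $k$ of $e$ (using $e\circ f\circ k=e\circ f\circ e\circ k=0$), check that the induced endomorphism of $K$ is idempotent by cancelling the monomorphism $k$, and take its kernel. The paper leaves the final verification that $k\circ j$ is the intersection as ``easily verified''; you have simply written it out.
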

\begin{proof}
Let $k\colon K\ra A$ be the kernel of $e$.
Since $e\circ f \circ k=e\circ f\circ e\circ k =0$, there exists a unique $g$ with 
$f\circ k=k\circ g$. Then $k\circ g\circ g = f\circ k\circ g=f\circ f \circ k=f\circ k=k\circ g$ and $k$ is a monomorphism. So $g$ is idempotent. Then the kernel $\ell :L\ra K$ of $g$ is easily verified to be the intersection of the kernels of $e$ and $f$.
\end{proof}

Protomodular categories were defined by Bourn \cite{Bourn91}:
a category $\CX$ (with zero morphisms) is {\em protomodular} when 
it is finitely complete and, for each object $A$, 
the functor $\mathrm{ker}: \mathrm{Pt}A\ra \CX$ is conservative. 
Here $\mathrm{Pt}A$ is the category
whose objects $(p,X,s)$ consist of morphisms $p:X\ra A, s:A\ra X$ with
$p\circ s = 1_A$, and whose morphisms $f:(p,X,s)\ra (q,Y,t)$ are morphisms
$f:X\ra Y$ such that $q\circ f = p$ and $f\circ s = t$.
Also the functor $\mathrm{ker}$ takes $(p,X,s)$ to the kernel of $p$.

The following property is sometimes \cite{BorBou07} taken as the definition of protomodular
in the pointed context.  

\begin{lemma}\label{protolemma} 
In a protomodular category, if $(p,X,s)$ is an object of 
$\mathrm{Pt}A$ and $k:K\ra X$
is the kernel of $p$ then $s:A\ra X, k:K\ra X$ are jointly strongly epimorphic.
\end{lemma}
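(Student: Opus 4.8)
The plan is to verify the jointly-strong-epimorphic condition head-on: suppose $m\dd M\ra X$ is a monomorphism through which both $s$ and $k$ factor, say $s=m\circ s'$ and $k=m\circ k'$, and show that $m$ must then be invertible. The whole strategy is to recognize $m$ as a morphism in the fibre $\mathrm{Pt}A$ and to invoke the conservativity of $\mathrm{ker}\dd \mathrm{Pt}A\ra \CX$ that defines protomodularity; this reduces the assertion to checking that $m$ induces an isomorphism on kernels.

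First I would equip $M$ with a point structure over $A$. Setting $q=p\circ m\dd M\ra A$, the factorization $s=m\circ s'$ gives $q\circ s'=p\circ m\circ s'=p\circ s=1_A$, so $(q,M,s')$ is an object of $\mathrm{Pt}A$. Moreover the identities $p\circ m=q$ and $m\circ s'=s$ say exactly that $m\dd (q,M,s')\ra (p,X,s)$ is a morphism in $\mathrm{Pt}A$.

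The key step is to identify $\mathrm{ker}(q)$. I claim the factor $k'\dd K\ra M$ is a kernel of $q$. Indeed $q\circ k'=p\circ m\circ k'=p\circ k=0$; since $k=m\circ k'$ is a monomorphism so is $k'$, and the universal property of $\mathrm{ker}(q)$ is inherited from that of $k=\mathrm{ker}(p)$ by cancelling the monomorphism $m$. With this identification the morphism $\mathrm{ker}(m)\dd \mathrm{ker}(q)\ra \mathrm{ker}(p)$ induced on kernels is the unique $\phi\dd K\ra K$ with $k\circ \phi=m\circ k'=k$, namely $\phi=1_K$. Thus $\mathrm{ker}(m)$ is an isomorphism, and protomodularity (conservativity of $\mathrm{ker}$) forces $m$ to be an isomorphism in $\mathrm{Pt}A$, hence in $\CX$, as required.

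The main obstacle — in fact the only point requiring genuine care — is the verification that $k'$ is really a kernel of $q$, and not merely a monomorphism sitting inside it; this is precisely where the hypothesis that $m$ is a monomorphism is used essentially, to cancel $m$ when transporting the universal property of $k=\mathrm{ker}(p)$ across to $q$. Once this identification is secured, the passage through $\mathrm{Pt}A$ and the appeal to conservativity are purely formal.
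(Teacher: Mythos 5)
Your proof is correct and follows essentially the same route as the paper's: view $m$ as a morphism $(p\circ m,M,s')\to(p,X,s)$ in $\mathrm{Pt}A$, check that it induces an isomorphism on kernels, and invoke conservativity of $\mathrm{ker}\dd\mathrm{Pt}A\to\CX$. Your explicit verification that $k'$ is a kernel of $p\circ m$ (by cancelling the monomorphism $m$) is exactly the step the paper compresses into ``Using $v$, we see that $m$ induces an isomorphism between the kernel of $p\circ m$ and $K$.''
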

\begin{proof} Suppose $m:Y\ra X$ is a monomorphism and $m\circ u = s, m\circ v = k$ for some $u,v$. Then $m:(p\circ m,Y,u)\ra (p,X,s)$ is a morphism of
$\mathrm{Pt}A$. Using $v$, we see that $m$ induces an isomorphism between
the kernel of $p\circ m$ and $K$. Since $\mathrm{ker}: \mathrm{Pt}A\ra \CX$
is conservative, $m:(p\circ m,Y,u)\ra (p,X,s)$ is invertible. So $m$ is invertible.   
\end{proof}      

\begin{proposition}\label{protoidemp}
Let $a_1,\dots,a_n$ be a list of idempotents on an object $A$ of a protomodular category $\CX$. 
Suppose $a_i\circ a_j\circ a_i=a_i\circ a_j$ for $i<j$.
Suppose $a_i=m_i\circ m_i^*$ is a splitting of $a_i$ via a subobject $m_i:A_i\ra A$
and retraction $m_i^*$.
Let $k_i:K_i\ra A$ be the kernel of $m_i^*$ (or equally of $a_i$).
Then the morphisms $m_1,\dots , m_n$ along with the inclusion $\cap_i{\mathrm{ ker} \ m_i^*} \ra A$ are jointly strongly epimorphic.
\end{proposition}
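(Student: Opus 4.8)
The plan is to induct on $n$, taking Lemma~\ref{protolemma} as the fundamental tool and peeling off the first idempotent $a_1$. Since a protomodular $\CX$ is finitely complete it has equalizers, so every idempotent splits through a subobject; I use this freely to supply the data $m_i$, $m_i^*$ when needed. For the base case $n=1$ we have $\bigcap_i \ker m_i^* = K_1$, and the triple $(m_1^*,A,m_1)$ is an object of $\mathrm{Pt}A_1$ whose kernel is $k_1\dd K_1\ra A$; Lemma~\ref{protolemma} says exactly that $m_1$ and $k_1$ are jointly strongly epimorphic, which is the assertion.

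For the inductive step I first restrict the later idempotents to $K_1=\ker a_1$. Because $a_1\circ a_j\circ a_1=a_1\circ a_j$ for $j>1$, the argument of the first Proposition of this section yields, for each $j>1$, a unique idempotent $b_j$ on $K_1$ with $a_j\circ k_1=k_1\circ b_j$. Cancelling the monomorphism $k_1$ in the chain $k_1\circ b_ib_jb_i=a_ia_ja_i\circ k_1=a_ia_j\circ k_1=k_1\circ b_ib_j$ (each equality using $k_1 b_\ell=a_\ell k_1$, and the hypothesis $a_ia_ja_i=a_ia_j$ in the middle) shows the $b_j$ inherit the relation $b_ib_jb_i=b_ib_j$ for $1<i<j$. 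Splitting $b_j=m'_j\circ (m'_j)^*$ through a subobject $m'_j\dd (K_1)_j\ra K_1$, and writing $\iota\dd \bigcap_{j>1}\ker b_j\ra K_1$ for the inclusion of the (existing, by the hypothesis just verified) intersection, one checks, using that $k_1$ is monic, that $k_1$ identifies $\ker b_j$ with $K_1\cap K_j$ and hence $\bigcap_{j>1}\ker b_j$ with $Q:=\bigcap_i \ker m_i^*$; thus $q=k_1\circ\iota$ up to isomorphism. Applying the inductive hypothesis to $b_2,\dots,b_n$ on $K_1$ then gives that $m'_2,\dots,m'_n,\iota$ are jointly strongly epimorphic into $K_1$.

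To conclude, let $t\dd T\ra A$ be a monomorphism through which $m_1,\dots,m_n$ and $q$ all factor; I must show $t$ is invertible. Form the pullback $t_1\dd T_1\ra K_1$ of $t$ along $k_1$, again a monomorphism. Since $q=k_1\circ\iota$ factors through $t$, the map $\iota$ factors through $t_1$. For $j>1$, the identity $b_j m'_j=m'_j$ gives $a_j\circ(k_1 m'_j)=k_1 b_j m'_j=k_1 m'_j$, so $k_1 m'_j$ is fixed by $a_j=m_j m_j^*$ and therefore factors through $m_j$; as $m_j$ factors through $t$, so does $k_1 m'_j$, whence $m'_j$ factors through $t_1$. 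Thus the jointly strongly epimorphic family $m'_2,\dots,m'_n,\iota$ factors through the monomorphism $t_1$, forcing $t_1$ invertible. Consequently $k_1$ factors through $t$; since $m_1$ also factors through $t$ and $\{m_1,k_1\}$ is jointly strongly epimorphic by Lemma~\ref{protolemma}, $t$ is invertible, as required.

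The main obstacle is the bookkeeping concentrated in the second paragraph: verifying that the restricted idempotents $b_j$ genuinely satisfy the same commutation relations, and that $k_1$ carries the intersection of their kernels precisely onto $Q$. Conceptually everything rests on just two inputs — protomodularity, entering only through Lemma~\ref{protolemma}, and the stability of jointly strong epimorphic families under pullback of the codomain along a monomorphism — so once the reduction to $K_1$ is correctly set up, the remaining orthogonality arguments are routine.
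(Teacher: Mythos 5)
Your proof is correct and follows essentially the same route as the paper's: restrict the later idempotents to $K_1=\ker a_1$ using $a_1\circ a_j\circ a_1=a_1\circ a_j$, check that the induced idempotents on $K_1$ inherit the commutation relation by cancelling the monomorphism $k_1$, identify their kernels with $K_1\cap K_j$, and induct, with Lemma~\ref{protolemma} supplying the two-element covers at each stage. The only difference is presentational: you make explicit, via the pullback of the test monomorphism $t$ along $k_1$, the step the paper compresses into ``$A$ may be covered by $A_1$, $A_i$, and $K_1\cap K_i$ \dots\ now continue inductively'', which is a clarification rather than a new idea.
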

\begin{proof}
By Lemma~\ref{protolemma}, for each $i$, the morphisms $m_i:A_i\ra A$
and $k_i:K_i\ra A$ are jointly strongly epimorphic; we will loosely say ``$A_i$ and $K_i$
cover $A$''. 

If $i>1$ then $a_1\circ a_i\circ k_1=a_1\circ a_i\circ a_1\circ k_1=0$, and so $a_i\circ k_1$
lands in $K_1$, providing a factorization $a_i\circ k_1=k_1\circ a_i^1$.
Now $a_i^1$ is also an idempotent, and, for $1<i<j$, $k_1\circ a_i^1\circ a_j^1\circ a_i^1= a_i\circ a_j\circ a_i\circ k_1 = a_i\circ a_j\circ k_1= k_1\circ a_i^1\circ a_j^1$, and so
$a^1_i\circ a^1_j\circ a^1_i= a^1_i\circ a^1_j$. Clearly the splitting $A^1_i$ of $a^1_i$
is contained in $A_i$.

The kernel $K^1_i$ of $a^1_i$ is $K^1_i=K_1\cap K_i$ since $a_1\circ x=a_i\circ x = 0$ is equivalent to
$x=k_1\circ y$ and $k_1\circ a^1_i\circ y = a_i\circ k_1\circ y = s_i\circ x = 0$; 
so in fact $a^1_i\circ y=0$. 

We know that $A$ may be covered by $A_1$ and $K_1$. By Lemma~\ref{protolemma} again, we know, for each $i>1$, that $K_1$ may be covered by the splitting of $A^1_i$
and the kernel $K^1_i=K_1\cap K_i$ of $a^1_i$. Since $A^1_i\le A_i$, we see that
$A$ may be covered by $A_1$, $A_i$, and $K_1\cap K_i$.

Now continue inductively.  
\end{proof}

\begin{theorem}\label{protocounit}
Suppose $\CP$ is as in Section~\ref{setting} and $\CX$ is protomodular (with zero morphisms) with finite coproducts.
Then the components of the counit of the adjunction \eqref{pttedkeradj} are strong epimorphisms.
\end{theorem}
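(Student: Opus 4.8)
The plan is to show that each component $\varepsilon_T B\dd \widehat{M}\widetilde{M}(T)B\to TB$ of the counit $\varepsilon$ of the adjunction \eqref{pttedkeradj} is a strong epimorphism in $\CX$; since monomorphisms in $[\CP,\CX]$ are detected pointwise, this gives that $\varepsilon$ is pointwise, hence componentwise, a strong epimorphism. Using Theorem~\ref{combadj} we have $\widehat{M}\widetilde{M}(T)B\cong\sum_{S\preceq_m B}\widetilde{M}(T)S$, and, unwinding the transpose of $1_{\widetilde{M}(T)}$ as in \eqref{thetadef}, the restriction of $\varepsilon_T B$ to the summand indexed by $m\dd S\to B$ is the composite $T(m)\circ\iota_S$, where $\iota_S\dd\widetilde{M}(T)S=\bigcap_{V\prec_{\ell} S}\mathrm{ker}\,T(\ell^{*}\dd S\to V)\hookrightarrow TS$. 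First I would record that this intersection only needs the maximal proper subobjects: if $m_1,\dots,m_n\dd S_i\to B$ are the maximal proper subobjects supplied by Assumption~\ref{ass7}, then every proper $U\prec B$ factors as $m_U=m_i\circ f$ with $f\in\CM$, whence $\mathrm{ker}\,T(m_i^{*})\subseteq\mathrm{ker}\,T(m_U^{*})$ because $m_U^{*}=f^{*}\circ m_i^{*}$. Thus $\widetilde{M}(T)B=\bigcap_{i}\mathrm{ker}\,T(m_i^{*})$, so the $S=B$ summand of $\varepsilon_T B$ is exactly the inclusion $\bigcap_i\mathrm{ker}\,T(m_i^{*})\hookrightarrow TB$.

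Next I would feed these data into Proposition~\ref{protoidemp}. The endomorphisms $a_i=T(m_i)\circ T(m_i^{*})=T(m_i\circ m_i^{*})$ of $TB$ are idempotent and split via the subobject $T(m_i)$ (a split mono, since $T(m_i^{*})\circ T(m_i)=T(m_i^{*}\circ m_i)=1$) with retraction $T(m_i^{*})$, and $\mathrm{ker}\,a_i=\mathrm{ker}\,T(m_i^{*})$. Applying $T$ to the relations $c_j\circ c_i\circ c_j=c_j\circ c_i$ of Assumption~\ref{ass7} and relabelling the list in reverse order turns them into the hypothesis $a_i\circ a_j\circ a_i=a_i\circ a_j$ of Proposition~\ref{protoidemp}. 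That proposition then yields that the family $T(m_1),\dots,T(m_n)$ together with $\bigcap_i\mathrm{ker}\,T(m_i^{*})\to TB$ is jointly strongly epimorphic.

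Finally I would run an induction on the cardinality of $\mathrm{Sub}B$ (finite by Assumption~\ref{ass5}); the base case, where $B$ has no proper subobjects, gives $\varepsilon_T B=1_{TB}$. For the inductive step, each maximal proper $S_i$ has $|\mathrm{Sub}\,S_i|<|\mathrm{Sub}B|$ (since $m_i$ is monic, $\mathrm{Sub}\,S_i$ embeds into the proper part of $\mathrm{Sub}B$), so by induction $\varepsilon_T S_i$ is a strong epimorphism; moreover the summands of $\varepsilon_T B$ lying below $S_i$ assemble, via the evident comparison $j_i$, into the identity $\varepsilon_T B\circ j_i=T(m_i)\circ\varepsilon_T S_i$. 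Now suppose $\varepsilon_T B=\mu\circ g'$ with $\mu$ a monomorphism. Then $\mu\circ(g'\circ j_i)=T(m_i)\circ\varepsilon_T S_i$, and since $\varepsilon_T S_i$ is a strong epimorphism and $\mu$ is monic the diagonal fill-in produces $d_i$ with $\mu\circ d_i=T(m_i)$; the inclusion $\bigcap_i\mathrm{ker}\,T(m_i^{*})\hookrightarrow TB$ factors through $\mu$ directly via the $S=B$ summand. Thus every member of the jointly strongly epimorphic family of the previous paragraph factors through the mono $\mu$, forcing $\mu$ to be invertible; hence $\varepsilon_T B$ is a strong epimorphism. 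The crux — and the only step that is not formal bookkeeping — is precisely this last manoeuvre: promoting the factorization of $T(m_i)\circ\varepsilon_T S_i$ through $\mu$ to a factorization of $T(m_i)$ itself, which works only because the inductive hypothesis delivers a \emph{strong} epimorphism rather than a mere epimorphism, and it is here that protomodularity (through Proposition~\ref{protoidemp}) does the essential work.
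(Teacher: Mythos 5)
Your proof is correct and follows essentially the same route as the paper's: decompose the counit component at $B$ into the summands indexed by the maximal proper subobjects plus the top summand $\bigcap_i\mathrm{ker}\,T(m_i^*)$, apply Proposition~\ref{protoidemp} to the idempotents $T(m_i\circ m_i^*)$ to get the jointly strongly epimorphic family, and induct to handle the lower summands. Your only departures are cosmetic improvements: you make the diagonal fill-in argument for combining the joint strong epimorphism with the inductive hypothesis explicit, and you induct on $|\mathrm{Sub}\,B|$ rather than on the number of maximal proper subobjects, which is the cleaner choice of parameter.
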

\begin{proof}
The counit has components
$$\sum_{B\preceq_mA}{\bigcap_{C\prec_n B}{\mathrm{ker} \ Tn^*}} \lra TA \ .$$
We prove this is a strong epimorphism by induction on the number $k$ of maximal
proper subobjects $A_1, \dots, A_k$ of $A$, with $m_i:A_i\ra A$. 
The result is clear for $k=1$. For $k>1$, consider the following diagram.
\begin{eqnarray*}
\xymatrix{
\sum_i{\sum_{B\preceq_mA}{\bigcap_{C\prec_n B}{\mathrm{ker} \ Tn^*}}} \ar[rr]^-{ } \ar[d]_-{\delta} && \sum_i{TA_i} \ar[d]^-{\gamma} && \\
\sum_{B\prec_mA}{\bigcap_{C\prec_n B}{\mathrm{ker} \ Tn^*}} \ar[rr]_-{\alpha} && FA && \bigcap_{C\prec_n A}{{\mathrm{ker} \ Tn^*}} \ar[ll]^-{\beta}}
\end{eqnarray*}
The component of the counit is strongly epimorphic if and only if $\alpha$ and
$\beta$ are jointly strongly epimorphic.
The top row is a coproduct of components of the counit already known to be strongly
epimorphic by induction. 
So it suffices to show that $\gamma$ and $\beta$ are jointly strongly epimorphic.
Rewriting the domain of $\beta$ as
$$\bigcap_{C\prec_n A}{{\mathrm{ker} \ Tn^*}} = \bigcap_{i=1}^k{{\mathrm{ker} \ Tm_i^*}} \ ,$$
we see that Proposition~\ref{protoidemp} applies to yield what we want.
\end{proof}

A category $\CX$ is {\em semiabelian} \cite{JMT} when it has zero morphisms, 
is protomodular, is Barr exact, and has finite coproducts.
A category is {\em regular} \cite{Excat} when it is finitely complete, and has the 
(strong epimorphism, monomorphism)-factorization system existing 
and stable under pullbacks. 
It follows that every strong epimorphism is regular (that is, a coequalizer);
see \cite{CarSt86} for a proof. 
A category is {\em Barr exact} when it is regular and every equivalence relation 
is a kernel pair. 

We mentioned Bourn's category $\mathrm{Pt} \CX$ in Example~\ref{babyDPK}.
We will use the following routine fact.

\begin{lemma}\label{ptpresstepi}
If $\CX$ is a semiabelian category then the functor 
$\mathrm{Pt} \CX\ra \CX$, sending each split epimorphism to its kernel, 
preserves strong epimorphisms.
\end{lemma}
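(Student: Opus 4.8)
The goal is to show that if $(f,g)\dd (p\dd X\ra A, s)\lra (q\dd Y\ra B, t)$ is a strong epimorphism in $\mathrm{Pt}\CX$, then the induced map on kernels $\bar f\dd \ker p\ra \ker q$ is a strong epimorphism in $\CX$. Since $\CX$ is semiabelian, hence regular, strong epimorphisms coincide with regular epimorphisms and are stable under pullback, and I will use this freely. Write $\ell\dd \ker p\ra X$ and $\ell'\dd \ker q\ra Y$ for the kernel inclusions, so that $f\circ \ell = \ell'\circ \bar f$ and $p\circ \ell = 0$.

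The plan hinges on exhibiting $\bar f$ as a pullback of a single morphism of $\CX$. I would form the pullback $A\times_B Y$ of $g$ and $q$ and take the comparison $\phi = \langle p, f\rangle\dd X\ra A\times_B Y$ (well defined since $q\circ f = g\circ p$), together with the morphism $j = \langle 0, \ell'\rangle\dd \ker q\ra A\times_B Y$, which is a monomorphism because its second component $\ell'$ is. The square with top $\bar f$, bottom $\phi$, left $\ell$ and right $j$ commutes, since $\phi\circ\ell = \langle p\ell, f\ell\rangle = \langle 0, \ell'\bar f\rangle = j\circ\bar f$, and it is in fact a pullback: pulling $\phi$ back along $j$ forces the first component to vanish, so the apex is the kernel of $p$ and the projection to $\ker q$ is $\bar f$ (using that $\ell'$ is monic to identify the second coordinate). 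Thus $\bar f$ is the pullback of $\phi$ along the monomorphism $j$.

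It therefore suffices to prove that the comparison $\phi\dd X\ra A\times_B Y$ is a strong (= regular) epimorphism; then $\bar f$, being a pullback of $\phi$ in the regular category $\CX$, is a strong epimorphism as well. Here I would invoke the description of regular epimorphisms in the fibration of points over a regular protomodular base: a morphism of points $(f,g)$ is a regular epimorphism precisely when it is a \emph{regular pushout}, that is, when both $g$ and the comparison $\phi$ are regular epimorphisms. (That $g$ is a regular epimorphism is already automatic, since the functor $\mathrm{Pt}\CX\ra\CX$ sending a point to its base is a left adjoint, namely to $A\mapsto (1_A\dd A\ra A, 1_A)$, and so preserves regular epimorphisms.) Applied to the given strong epimorphism $(f,g)$, this yields exactly that $\phi$ is a regular epimorphism, and the pullback argument above then finishes the proof.

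The main obstacle is this last input, that strong epimorphisms of points are regular pushouts. I regard it as the crux and would either cite it as the standard characterization of regular epimorphisms in the category of points of a regular protomodular category (in the spirit of Bourn), or establish it directly by computing the coequalizer of the kernel pair of $(f,g)$ and comparing it, via the short five lemma, with the regular image factorization of $\phi$; it is precisely the protomodularity of $\CX$, through the joint strong epimorphy of a section and a kernel recorded in Lemma~\ref{protolemma}, that drives this comparison. Once $\phi$ is known to be a regular epimorphism, the reduction to a pullback makes the remainder routine.
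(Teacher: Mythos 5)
Your proposal is correct and takes essentially the same route as the paper: both arguments reduce the lemma to showing that the comparison $X\ra A\times_B Y$ is a regular epimorphism --- the paper gets this by checking the square of split epimorphisms is a pushout and citing Theorem 5.7 of \cite{CKP93}, while you package the same Mal'tsev/exactness input as the ``regular pushout'' characterization of regular epimorphisms of points --- and then exhibit the induced map on kernels as a pullback of that comparison, concluding by pullback-stability of regular epimorphisms.
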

\begin{proof}
A strong epimorphism
\begin{eqnarray*}
\xymatrix{
X \ar[rr]^-{g} \ar@<1ex>[d]^{p} && Y \ar@<1ex>[d]^{q} \\
A\ar@<1ex>[u]^{s} \ar[rr]_-{f} && B\ar@<1ex>[u]^{t}}
\end{eqnarray*}
in $\mathrm{Pt} \CX$ has $f$ and $g$ strong epimorphisms in $\CX$.
From this it is easily verified that the square involving the downward-pointing
arrows is a pushout. 
Factor the morphism in $\mathrm{Pt} \CX$ as 
\begin{eqnarray*}
\xymatrix{
X \ar[rr]^-{g_1} \ar@<1ex>[d]^{p} && Z\ar[rr]^-{f_1} \ar@<1ex>[d]^{q_1} && Y \ar@<1ex>[d]^{q} \\
A\ar@<1ex>[u]^{s} \ar[rr]_-{1_A} && A\ar[rr]_-{f} \ar@<1ex>[u]^{t_1} && B\ar@<1ex>[u]^{t}
}
\end{eqnarray*}
in which the right-hand square involving the downward-pointing arrows is a pullback.
The induced morphism $\mathrm{ker} q_1\ra \mathrm{ker} q$ is invertible.
Semiabelian categories are Mal'tsev \cite{JMT}. 
Therefore, as a comparison morphism  to
the pullback in a pushout square of regular epimorphisms in a Mal'tsev exact category, $g_1$ is a strong epimorphism (see Theorem 5.7 of \cite{CKP93}). 
The induced morphism $\mathrm{ker} p\ra \mathrm{ker} q_1$ is the pullback
of $g_1$ along $\mathrm{ker} q_1 \ra Z$ and so is a strong epimorphism by regularity.
\end{proof}

\begin{theorem}
If $\CP$ is as in Section~\ref{setting} and $\CX$ is semiabelian then the functor
$\widetilde{M} \dd [\CP,\CX] \ra [\CD,\CX]_{\mathrm{pt}}$
of \eqref{settingkeradj} is (crudely) monadic.
\end{theorem}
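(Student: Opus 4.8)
The plan is to apply the crude monadicity theorem of Appendix~\ref{gr} to the right adjoint $\widetilde{M}\dd [\CP,\CX]\ra[\CD,\CX]_{\mathrm{pt}}$ of the adjunction \eqref{settingkeradj}, thinking of $[\CP,\CX]$ as the category of algebras and $[\CD,\CX]_{\mathrm{pt}}$ as the base. Four things must be checked: that $\widetilde{M}$ has a left adjoint; that $[\CP,\CX]$ admits coequalizers of reflexive pairs; that $\widetilde{M}$ preserves them; and that $\widetilde{M}$ is conservative. The first is immediate from $\widehat{M}\dashv\widetilde{M}$. The second holds because a semiabelian $\CX$ is finitely cocomplete and colimits in $[\CP,\CX]$ are computed pointwise; the same is true of $[\CD,\CX]_{\mathrm{pt}}$, so that preservation is a pointwise assertion about the functors $T\mapsto\widetilde{M}TA$.

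For conservativity, suppose $\widetilde{M}\phi$ is invertible for $\phi\dd T\Rightarrow T'$. Naturality of the counit of \eqref{settingkeradj} at $\phi$, combined with the fact (Theorem~\ref{protocounit}) that its components are strong epimorphisms and with the invertibility of $\widetilde{M}\phi$, shows at once that every $\phi_A$ is a strong epimorphism. To promote this to invertibility I would induct on the finite ordered set $\mathrm{Sub}A$. Each maximal proper $m_i\dd A_i\ra A$ makes $T(m_i^*)$ a split epimorphism whose base component under $\phi$ is $\phi_{A_i}$, invertible by induction; realising $\widetilde{M}TA=\bigcap_i\mathrm{ker}\,T(m_i^*)$ as an iterated kernel via the opening Proposition of Section~\ref{semiabelian} and Assumption~\ref{ass7}, and running the nested covering of Proposition~\ref{protoidemp}, one feeds the invertibility of $\widetilde{M}\phi_A$ through the split short five lemma valid in the protomodular $\CX$ to conclude that $\phi_A$ is also monic, hence invertible.

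It remains to show that $\widetilde{M}$ preserves reflexive coequalizers, which by the pointwise remark reduces to the functors $T\mapsto\bigcap_{U\prec_m A}\mathrm{ker}\,T(m^*)$. Using Assumption~\ref{ass7} and the opening Proposition of Section~\ref{semiabelian}, I would exhibit this finite intersection of kernels as an iterated kernel, so that it suffices to know that the kernel functor $\mathrm{Pt}\CX\ra\CX$ preserves reflexive coequalizers. Lemma~\ref{ptpresstepi} records that this functor preserves strong epimorphisms; combining this with the Barr exactness and Mal'tsev property of $\CX$ — by which a reflexive coequalizer is the quotient of an effective equivalence relation and is controlled by its regular epimorphisms — should upgrade strong-epimorphism preservation to reflexive-coequalizer preservation. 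Granting this, all four hypotheses of the crude monadicity theorem are in force and $\widetilde{M}$ is monadic.

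I expect the genuine obstacle to be this last point: passing from the strong-epimorphism statement of Lemma~\ref{ptpresstepi} to preservation of full reflexive coequalizers by the iterated kernel construction. This is exactly the step that consumes the \emph{exactness} of the semiabelian hypothesis rather than mere protomodularity, and it is where the commuting-idempotent Assumption~\ref{ass7} is load-bearing, keeping the iterated construction of $\bigcap_i\mathrm{ker}\,T(m_i^*)$ under control; the authors' own suspicion that Assumption~\ref{ass7} ought to be dispensable is a good indication that this is the delicate part.
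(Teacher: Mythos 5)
Your skeleton is the paper's: crude monadicity, conservativity extracted from Theorem~\ref{protocounit}, and preservation of reflexive coequalizers reduced to a pointwise strong-epimorphism statement handled by realizing $\bigcap_i\ker T(m_i^*)$ as an iterated kernel of split epimorphisms (Assumption~\ref{ass7} with the opening Proposition of Section~\ref{semiabelian}) and repeatedly applying Lemma~\ref{ptpresstepi}. Two of your steps are heavier than they need to be, and one of them is left genuinely unproved. For conservativity, the induction over $\mathrm{Sub}A$ and the split short five lemma are both unnecessary and not actually carried out: a right adjoint between finitely complete categories whose counit is pointwise a strong epimorphism is automatically conservative, because an epimorphic counit makes $\widetilde{M}$ faithful, faithfulness together with preservation of kernel pairs forces any $\phi$ with $\widetilde{M}\phi$ invertible to have trivial kernel pair and hence to be monic, and you have already shown such a $\phi$ is a strong epimorphism; this is what the paper's parenthetical ``logically equivalent'' refers to. Second, the passage from strong-epimorphism preservation to reflexive-coequalizer preservation, which you single out as the genuine obstacle, is a known result rather than a gap: Lemma 5.1.12 of \cite{BorBou04} says a limit-preserving functor between semiabelian categories preserves reflexive coequalizers as soon as it preserves regular (equivalently strong) epimorphisms, and the paper applies it directly to $\widetilde{M}$ (both functor categories being semiabelian), leaving only the pointwise claim you correctly attribute to Lemma~\ref{ptpresstepi}. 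Your diagnosis that Assumption~\ref{ass7} is load-bearing is right, but its role is to order the idempotents $m_im_i^*$ so that the intersection of kernels becomes a \emph{sequence} of kernels of split epimorphisms; it serves that purpose both here and in Theorem~\ref{protocounit}, not specifically in the coequalizer upgrade.
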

 \begin{proof}
 By Theorem~\ref{protocounit}, the right adjoint $\widetilde{M}$ is conservative 
 (since this is logically equivalent to the counit being a strong epimorphism).
 Since $\CX$ is semiabelian, it has coequalizers. 
 Therefore $[\CP,\CX]$ has coequalizers, so, for crude monadicity \cite{CWM}, 
 it suffices to show that $\widetilde{M}$ preserves coequalizers of reflexive pairs.
 
 Both $[\CP,\CX]$ and $[\CD,\CX]_{\mathrm{pt}}$ are semiabelian.
 A limit-preserving functor between semiabelian categories preserves coequalizers of
 reflexive pairs provided it preserves strong (= regular) epimorphisms 
 (see Lemma 5.1.12 of \cite{BorBou04}).
 
 Let $q:S\ra T$ be a strong epimorphism in $[\CP,\CX]$.
 Each $q_A:SA\ra TA$ is a strong epimorphism.
 We must show each $\widetilde{q}_A:\widetilde{S}A\ra \widetilde{T}A$ is a strong epimorphism.
 
 Recall that $\widetilde{T}A$ is calculated by a sequence of kernels of split epimorphisms.
 This sequence depends only on the object $A$ and the category $\CP$, not on the
 particular functor $T$. 
 The desired result follows on repeated application of Lemma~\ref{ptpresstepi}.   
  \end{proof}
  
  \appendix
 \section{A general result from enriched category theory}\label{gr}

Let $\CV$ be a symmetric monoidal closed category which is complete and cocomplete. 
The tensor product of $A, B\in \CV$ is written $A\otimes B$ and the unit for
tensor is $I$, following \cite{KellyBook}.
For a $\CV$-category $\CX$, we also write $A\otimes X$ for the tensor
(=copower)  and
write $[A,X]$ for the cotensor  (=power)  of $A\in \CV$ and $X\in \CX$; 
we have
$$\CX(A\otimes X,Y)\cong \CV(A,\CX(X,Y)) \cong \CX(X,[A,Y]) \ .$$

A $\CV$-functor $L\dd \CC \ra \CA$ is {\em Cauchy dense} 
(as a morphism $L_*=\CA(1_{\CA},L)\dd \CC \ra \CA$
in the bicategory $\CV\text{-}\mathrm{Mod}$) when the morphism
\begin{eqnarray}\label{cd}
\int^C{\CA(LC,A_1)\otimes \CA(A,LC)}\ra \CA(A,A_1)
\end{eqnarray} 
induced by composition in $\CA$ is a strong epimorphism in $\CV$ 
for all $A, A_1\in \CA$. 
When $\CV$ is the category of abelian groups this means that each
$A\in \CA$ is a retract of a finite direct sum of objects $LC$ in the image of $L$. 
The following result is standard but we provide a proof.

\begin{proposition}\label{Cdmonadicity}
Suppose $\CX$ is a cocomplete $\CV$-category, $\CC$ is a small $\CV$-category,
and $L\dd \CC \ra \CA$ is a Cauchy dense $\CV$-functor. 
Then the $\CV$-functor $[L,1] \dd [\CA,\CX]\ra [\CC,\CX]$ is both monadic 
and comonadic, and so preserves and
reflects limits, colimits, and strong epimorphisms, as well as being conservative. 
\end{proposition}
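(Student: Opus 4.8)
The plan is to exhibit $[H,1]$ simultaneously as a conservative \emph{monadic} functor (via its left adjoint) and a conservative \emph{comonadic} functor (via its right adjoint), and then to read off the remaining assertions formally. Since $\CX$ is cocomplete and $\CC$ is small, the pointwise left Kan extension $\mathrm{Lan}_H$ exists and furnishes an adjunction $\mathrm{Lan}_H \dashv [H,1]$; dually the pointwise right Kan extension yields $[H,1]\dashv \mathrm{Ran}_H$. Because the functor $(-)\op$ identifies $[\CA,\CX]\op$ with $[\CA\op,\CX\op]$ and carries $[H,1]$ to $[H\op,1]$, and because the defining condition \eqref{cd} for Cauchy density is visibly self-dual (relabelling its coend turns the composition map for $H$ into that for $H\op$), the comonadic statement for $(H,\CX)$ is precisely the monadic statement for $(H\op,\CX\op)$. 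I would therefore prove the monadic half once and invoke this duality for comonadicity.

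The crux is to convert the strong-epimorphism hypothesis \eqref{cd} into a usable reconstruction of an arbitrary object of $\CA$ from objects in the image of $H$. Setting $A_1=A$ in \eqref{cd} and chasing the identity $1_A$ through the strong epimorphism exhibits $A$ as an \emph{absolute} (Cauchy) colimit of objects $HC$; over $\CV=\mathrm{Ab}$ this is exactly the statement, observed just after \eqref{cd}, that $A$ is a retract of a finite direct sum $\bigoplus_i HC_i$. The essential feature is absoluteness: such a colimit is preserved by \emph{every} $\CV$-functor, in particular by each $S\dd \CA\ra\CX$ and by each evaluation functor.

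From this reconstruction, conservativity of $[H,1]$ is immediate: if $\phi\dd S\Rightarrow T$ has every component $\phi_{HC}$ invertible, then for each $A$ the component $\phi_A$ is the map induced on the absolute colimit, hence a retract of a direct sum of isomorphisms, hence invertible. Next, limits and colimits in $[\CA,\CX]$ and $[\CC,\CX]$ are computed pointwise, so the restriction functor $[H,1]$ manifestly preserves them; being conservative while preserving all limits and colimits, it therefore reflects them as well. Monadicity now follows from Beck's (crude) monadicity theorem: $[H,1]$ is a conservative right adjoint whose domain $[\CA,\CX]$ has, and which preserves, coequalizers of reflexive pairs. Comonadicity follows by the $(-)\op$ duality of the first paragraph, and the asserted preservation and reflection of limits, colimits, and strong epimorphisms, as well as conservativity, then follow formally from $[H,1]$ being at once monadic and comonadic.

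The main obstacle is the second paragraph: extracting a genuine \emph{absolute}-colimit presentation of each object from the bare strong-epimorphism condition \eqref{cd}, uniformly over a general base $\CV$ rather than only for $\CV=\mathrm{Ab}$, where strong epimorphisms are surjections and the splitting of $1_A$ is transparent. A secondary point is the bookkeeping of the two adjoints: the monadic half uses only cocompleteness of $\CX$, whereas the comonadic half uses completeness (present in all of our applications), the two being interchanged by passage to $\CX\op$. Once the absolute presentation is secured, the rest is formal manipulation with Beck's (co)monadicity theorem and with the pointwise formation of (co)limits in functor categories.
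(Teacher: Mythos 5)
Your overall architecture (Kan extension adjoints on both sides, conservativity plus pointwise (co)limits plus crude Beck, duality for the comonadic half) is the same as the paper's, but the step you yourself flag as ``the main obstacle'' is a genuine gap, and it sits at the crux. Over a general base $\CV$ you cannot ``chase $1_A$ through'' the strong epimorphism \eqref{cd}: the underlying-set functor $\CV(I,-)$ need not carry strong epimorphisms of $\CV$ to surjections, and even when it does, an element of the coend $\int^C\CA(HC,A)\otimes\CA(A,HC)$ is represented by a single pair $(g,f)$ through one $HC$, so you would only exhibit $A$ as a retract of a single $HC$ unless $\CV$ has biproducts (this is why the retract-of-a-finite-direct-sum reading is stated in the paper only for $\CV=\mathrm{Agp}$). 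Since your conservativity argument, and hence everything downstream, rests on the unproved absolute-colimit presentation, the proposal as written does not close.

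The paper avoids needing any such presentation. Using the coend form of the Yoneda lemma, the counit of $\mathrm{Lan}_H\dashv[H,1]$ at $F$ is identified with the map
$\int^{C,A}(\CA(HC,-)\otimes\CA(A,HC))\otimes FA\lra\int^{A}\CA(A,-)\otimes FA$
induced by \eqref{cd}. Tensoring and forming coends are colimit constructions (left adjoints), so they preserve strong epimorphisms; hence the counit is a pointwise strong epimorphism. A right adjoint whose counit is a pointwise strong epimorphism is conservative: it is faithful because the counit is epimorphic, and if $[H,1]f$ is invertible then $f\circ\varepsilon_X=\varepsilon_Y\circ\mathrm{Lan}_H[H,1]f$ forces $f$ to be a strong epimorphism, while faithfulness forces it to be a monomorphism. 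The rest of your argument (pointwise preservation of colimits, reflection, crude Beck, and the dual for comonadicity, which indeed requires the right Kan extension and hence the relevant limits in $\CX$) then goes through exactly as you describe. So the repair is to replace your second paragraph by this direct computation of the counit rather than seeking an absolute-colimit reconstruction of each $A$.
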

\begin{proof}
The left adjoint to $[L,1]$ is left Kan extension $\mathrm{Lan}_L$ along $L$
defined by the coend formula
$$\mathrm{Lan}_L(G) = \int^C{\CA(LC,-)\otimes GC} \ .$$
Using the coend form of the Yoneda lemma (see \cite{efc}),
we see that the component at $F\in [\CA,\CX]$ of the counit for the adjunction
$\mathrm{Lan}_L \dashv [L,1]$ 
is isomorphic to the morphism
$$\int^{C,A}{(\CA(LC,-)\otimes \CA(A,LC))\otimes FA}\lra \int^A{\CA(A,-)\otimes FA}$$
induced by \eqref{cd}.
Since $L$ is Cauchy dense, these components are all strong epimorphisms.
It follows that the right adjoint $[L,1]$ is conservative.
It is clear that $[L,1]$ preserves colimits.
Since colimits exist in $[\CA,\CX]$, it follows that $[L,1]$ also reflects colimits.
By an easy case of the Beck monadicity theorem \cite{ML1963}, 
since $[L,1]$ has a left (respectively, right) adjoint,
is conservative and preserves coequalizers (respectively, equalizers), 
it is monadic (respectively, comonadic).    
\end{proof}

Suppose $M\dd \CB\ra \CA$ is a $\CV$-module viewed as a
$\CV$-functor $M\dd \CA^{\mathrm{op}} \otimes \CB \ra \CV$.
We call $M$ the {\em kernel} of the adjunction
\begin{equation}\label{keradj} 
\xymatrix @R-3mm {
[\CA,\CX] \ar@<1.5ex>[rr]^{\widehat{M}} \ar@{}[rr]|-{\bot} && [\CB,\CX]
 \ar@<1.5ex>[ll]^{\widetilde{M}} \ ,
}
\end{equation}
where 
\begin{eqnarray}\label{endtilde}
\widetilde{M}(T)A = \int_B{[M(A,B),TB]}
\end{eqnarray}
and
\begin{eqnarray}\label{coendhat}
\widehat{M}(F)B = \int^A{M(A,B)\otimes FA \ .}
\end{eqnarray}
The counit and unit of \eqref{keradj} will be denoted by
$$\varepsilon_M \dd \widehat{M}\widetilde{M}\Lra 1_{[\CB,\CX]} \ \text{ and } \ 
\eta_M \dd 1_{[\CA,\CX]} \Lra \widetilde{M} \widehat{M} \ .$$

To say $M\dd \CB\ra \CA$ is an equivalence as a $\CV$-module
is the same as saying $\widetilde{M}\dd [\CB,\CV]\ra [\CA,\CV]$
is an equivalence of $\CV$-categories. We also say $\CA$
and $\CB$ are {\em Cauchy} (or {\em Morita}) equivalent when
such an equivalence exists. 
It follows that $\CA^{\mathrm{op}}$
and $\CB^{\mathrm{op}}$ are also Cauchy equivalent,
and that the adjunction \eqref{keradj} is an equivalence for any
Cauchy complete $\CX$.   
See \cite{ECC} for more detail.    

Suppose $M\dd \CB\ra \CA$ and $N\dd \CD\ra \CC$ are $\CV$-modules, and 
$L\dd \CC \ra \CA$ and $K\dd \CD\ra \CB$ are $\CV$-functors.
Each $\CV$-natural family 
\begin{eqnarray}\label{theta}
\theta_{C,D}\dd N(C,D)\ra M(LC,KD)
\end{eqnarray}
induces $\CV$-natural families $\theta^{r}_{A,D}\dd$
$$\int^C{N(C,D)\otimes \CA(A,LC)} \stackrel{\int^C{\theta \otimes 1}}\lra \int^C{M(LC,KD)\otimes \CA(A,LC)} \stackrel{\mathrm{act^r}} \lra M(A,KD)$$ 
and $\theta^{\ell}_{C,B}\dd$
$$\int^D{\CB(KD,B)\otimes N(C,D)} \stackrel{\int^D{1 \otimes \theta}}\lra \int^D{\CB(KD,B)\otimes M(C,D)} \stackrel{\mathrm{act^{\ell}}} \lra M(LC,B) \ .$$
We obtain three $\CV$-natural transformations \eqref{threesquares}, all mates under
appropriate adjunctions.
\begin{eqnarray}\label{threesquares}
\begin{aligned}
\xymatrix{\ar @{} [dr] | {\stackrel{\theta^{r}} \Lla}
[\CA,\CX] \ar[d]_-{\widehat{M}} \ar[r]^-{[L,1]} & \ar @{} [dr] | {\stackrel{\theta^{\ell}} \Lra} [\CC,\CX] \ar[d]_-{\widehat{N}} \ar[r]^-{\mathrm{Lan}_L} & [\CA,\CX] \ar[d]^-{\widehat{M}}\\
[\CB,\CX] \ar[r]_-{[K,1]}           & [\CD,\CX]  \ar[r]_-{\mathrm{Lan}_K} & [\CB,\CX]
}
\qquad
\xymatrix{\ar @{} [dr] | {\stackrel{\tilde{\theta}} \Lra}
[\CB,\CX] \ar[d]_-{\widetilde{M}} \ar[r]^-{[K,1]} &  [\CD,\CX] \ar[d]_-{\widetilde{N}} \\
[\CA,\CX] \ar[r]_-{[L,1]}           & [\CC,\CX] 
}
\end{aligned}
\end{eqnarray}
Notice that $\tilde{\theta}$ is invertible if and only if $\theta^{\ell}$ is invertible.
Also, we have the following two commutative squares.
\begin{equation}\label{thetamates}
\begin{aligned}
\xymatrix{
\widehat{N} [L,1] \widetilde{M} \ar[rr]^-{\theta^r\widetilde{M}} \ar[d]_-{\widehat{N}\tilde{\theta}} && [K,1] \widehat{M}\widetilde{M} \ar[d]^-{[K,1]\varepsilon_M} \\
\widehat{N}\widetilde{N} [K,1] \ar[rr]_-{\varepsilon_N [K,1]} && [K,1]}
\qquad
\xymatrix{
[L,1] \ar[rr]^-{\eta_N[L,1]} \ar[d]_-{[L,1]\eta_M} && \widetilde{N}\widehat{N}[L,1] \ar[d]^-{\widetilde{N}\theta^r} \\
[L,1]\widetilde{M}\widehat{M} \ar[rr]_-{\tilde{\theta}\widehat{M}} && \widetilde{N}[K,1]\widehat{M}}
\end{aligned}
\end{equation}

\begin{proposition}\label{Mirror}
Suppose $\CX$ is a cocomplete $\CV$-category,
the $\CV$-functors $L\dd \CC \ra \CA$ and $K\dd \CD \ra \CB$ are Cauchy dense
with $\CC$ and $\CD$ small, and $\theta \dd N\Rightarrow M(L,K)$ is as in \eqref{theta}. 
\begin{enumerate}
\item[(a)] If $\tilde{\theta}$ is a strong epimorphism and $\widetilde{N}$ is conservative then
$\widetilde{M}$ is conservative. 
\item[(b)] If $\theta^r$ is a strong monomorphism and $\widehat{N}$ is conservative then $\widehat{M}$ is conservative.
\item[(c)] If $\tilde{\theta}$ is invertible and $\widetilde{N}$ preserves reflexive coequalizers then $\widetilde{M}$ preserves reflexive coequalizers.
\item[(d)] If $\theta^r$ is invertible and $\widehat{N}$ preserves reflexive coequalizers then $\widehat{M}$ preserves reflexive coequalizers.
\item[(e)] If $\tilde{\theta}$ and $\theta^r$ are invertible and $\widetilde{N}$ is fully faithful then $\widetilde{M}$ is fully faithful.
\item[(f)] If $\tilde{\theta}$ and $\theta^r$ are invertible and $\widehat{N}$ is fully faithful then $\widehat{M}$ is fully faithful.
\end{enumerate}
\end{proposition}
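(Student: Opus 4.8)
The plan is to treat all six parts uniformly, reducing each to a property that the Cauchy dense functors $[H,1]$ and $[K,1]$ are known to reflect. The standing tools are: by Proposition~\ref{Cdmonadicity} both $[H,1]$ and $[K,1]$ are monadic and comonadic, hence conservative and preserving and reflecting limits, colimits and strong epimorphisms (and, having adjoints on both sides, strong monomorphisms as well); a right adjoint is conservative exactly when its counit is a pointwise strong epimorphism and fully faithful exactly when its counit is invertible, with the dual statements for a left adjoint in terms of its unit and strong monomorphisms; and strong epimorphisms are closed under composition with the property that if $g\circ f$ is a strong epimorphism then so is $g$ (dually for strong monomorphisms, where one concludes $f$). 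The two commutative squares \eqref{thetamates} are the engine: the left one gives $[K,1]\varepsilon_M\circ(\theta^r\widetilde M)=\varepsilon_N[K,1]\circ\widehat N\tilde\theta$, and the right one gives $\tilde\theta\widehat M\circ[H,1]\eta_M=\widetilde N\theta^r\circ\eta_N[H,1]$.

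For parts (a) and (e) I would work with the left square. Since $\widetilde N$ is conservative (resp.\ fully faithful), $\varepsilon_N$ is a pointwise strong epimorphism (resp.\ invertible), so $\varepsilon_N[K,1]$ is a strong epimorphism (resp.\ iso); as $\widehat N$ is a left adjoint it preserves the strong epimorphism (resp.\ iso) $\tilde\theta$, whence the right-hand side is a strong epimorphism (resp.\ iso). Reading off the left factor of the equal left-hand side gives that $[K,1]\varepsilon_M$ is a strong epimorphism (resp., using also that $\theta^r$ is invertible in (e), that $[K,1]\varepsilon_M$ is invertible). Because $[K,1]$ reflects strong epimorphisms and isomorphisms, $\varepsilon_M$ is a strong epimorphism (resp.\ invertible), so $\widetilde M$ is conservative (resp.\ fully faithful). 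Parts (b) and (f) are the exact mirror images using the right square: from $\widehat N$ conservative (resp.\ fully faithful) one gets $\eta_N$ a pointwise strong monomorphism (resp.\ iso), $\widetilde N$ preserves it (a right adjoint preserving strong monomorphisms), and reading off the right factor shows $[H,1]\eta_M$ is a strong monomorphism (resp., the factor $\tilde\theta\widehat M$ being invertible in (f), an iso); then $[H,1]$ reflects these to conclude $\eta_M$ is a strong monomorphism (resp.\ iso), i.e.\ $\widehat M$ is conservative (resp.\ fully faithful).

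For parts (c) and (d) I would instead use that invertibility of $\tilde\theta$ turns the right-hand square of \eqref{threesquares} into a natural isomorphism $[H,1]\widetilde M\cong\widetilde N[K,1]$. Since $[K,1]$ preserves colimits and $\widetilde N$ preserves reflexive coequalizers by hypothesis, the composite $\widetilde N[K,1]$, and hence $[H,1]\widetilde M$, preserves reflexive coequalizers; as $[H,1]$ reflects colimits, $\widetilde M$ preserves reflexive coequalizers, giving (c). Part (d) is the mirror (using $\theta^r$ invertible and the corresponding square), and is in any case immediate because $\widehat M$ is a $\CV$-left adjoint and so preserves all colimits.

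The one genuinely nonformal point, which I regard as the main obstacle, occurs in (a) and (b): the steps ``$\widehat N$ preserves the strong epimorphism $\tilde\theta$'' and ``$\widetilde N$ preserves the strong monomorphism $\theta^r$'' are not consequences of the adjunction in a completely arbitrary $\CV$. They do hold in the situations of interest because there strong epimorphisms coincide with regular epimorphisms (and strong monomorphisms with regular monomorphisms): a left adjoint such as $\widehat N$ preserves coequalizers and hence regular epimorphisms, while the right adjoint $\widetilde N$ preserves equalizers and hence regular monomorphisms. I would isolate this coincidence as a hypothesis on $\CX$ (automatic for $\CV=\mathrm{Ab}$ and $\CV=1/\mathrm{Set}$), after which every remaining step is a formal manipulation of the mate squares together with the reflection properties supplied by Cauchy density.
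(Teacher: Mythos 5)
Your argument is essentially the paper's own proof: parts (a), (b), (e), (f) are extracted from the two squares of \eqref{thetamates} by showing one composite is a strong epimorphism (resp.\ monomorphism, isomorphism), cancelling the other factor, and then using Proposition~\ref{Cdmonadicity} to reflect along $[K,1]$ or $[H,1]$; parts (c) and (d) use the invertible $2$-cells of \eqref{threesquares} and the reflection of colimits along $[H,1]$ exactly as the paper does. The only point of divergence is your closing paragraph: the step you single out as ``genuinely nonformal'' is in fact a formal consequence of the adjunction, so the extra hypothesis you propose (strong $=$ regular epimorphisms in $\CX$) is unnecessary and would needlessly weaken the proposition. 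A left adjoint $F\dashv G$ always preserves strong epimorphisms: a lifting problem for $Fe$ against a monomorphism $m$ transposes across the adjunction to a lifting problem for $e$ against $Gm$, which is a monomorphism because right adjoints preserve monomorphisms, and the diagonal supplied by the strong epimorphism $e$ transposes back to the required diagonal; dually, right adjoints preserve strong monomorphisms. The paper uses these facts silently (``If $\tilde\theta$ is a strong epimorphism, so too is $\widehat{N}\tilde\theta$''). Dropping your added hypothesis, your proof establishes the proposition exactly as stated and coincides with the paper's.
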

\begin{proof}
For (a), contemplate the left square of \eqref{thetamates}. 
If $\tilde{\theta}$ is a strong epimorphism, so too is $\widehat{N}\tilde{\theta}$.
If $\widetilde{N}$ is conservative then $\varepsilon_{N}[K,1]$ is a strong
epimorphism. It follows that $[K,1]\varepsilon_M$ is a strong epimorphism.
By Proposition~\ref{Cdmonadicity}, $\varepsilon_M$ is a strong epimorphism.
So $\widetilde{M}$ is conservative.  

For (b), contemplate the right square of \eqref{thetamates} and use the dual argument.

For (c), look at the rightmost square of \eqref{threesquares} with its invertible 2-cell. 
Take any kind of colimit in $[\CB,\CX]$. It is preserved by $[K,1]$ and so,
if the that kind of colimit is preserved by $\widetilde{N}$, then
$[L,1]\widetilde{M}$ preserves the colimit.
By Proposition~\ref{Cdmonadicity}, $[L,1]$ reflects the colimit and we have the result.  

For (d), look at the leftmost square of \eqref{threesquares} and apply the dual argument.

For (e), contemplate the left square of \eqref{thetamates} and deduce from our hypotheses that $[K,1]\varepsilon_M$ is invertible. By Proposition~\ref{Cdmonadicity}, $\varepsilon_M$ is invertible, so $\widetilde{M}$
is fully faithful.

For (f), contemplate the right square of \eqref{thetamates} and use the dual argument. 
\end{proof}

We shall say a $\CV$-functor is {\em crudely monadic} when it has a left adjoint,
is conservative, has reflexive coequalizers existing in the domain, and preserves
reflexive coequalizers.

\begin{corollary}\label{gencor}
Suppose $L$ and $K$ are Cauchy dense and both $\theta^{\ell}$ and $\theta^{r}$
are invertible. 
\begin{enumerate}
\item[(a)] If $\widetilde{N}\dd [\CD,\CV]\ra [\CC,\CV]$ is crudely monadic then $\widetilde{M}\dd [\CB,\CV]\ra [\CA,\CV]$ is too. 
\item[(b)] If $\widetilde{N}\dd [\CD,\CV]\ra [\CC,\CV]$ is an equivalence then $\widetilde{M}\dd [\CB,\CV]\ra [\CA,\CV]$ is too. 
\end{enumerate}
\end{corollary}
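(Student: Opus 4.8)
The plan is to derive both parts of the corollary directly from Proposition~\ref{Mirror}, whose six clauses already isolate exactly the properties of $\widetilde{M}$ and $\widehat{M}$ that crude monadicity and equivalence demand. The key preliminary observations are that $\tilde{\theta}$ is invertible precisely when $\theta^{\ell}$ is (as recorded just after \eqref{threesquares}), and that an invertible $\CV$-natural transformation is in particular both a strong epimorphism and a strong monomorphism. Thus the single hypothesis that $\theta^{\ell}$ and $\theta^{r}$ are invertible simultaneously supplies all the epi/mono and invertibility inputs that Proposition~\ref{Mirror} requires.

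For part (a), I would unwind crude monadicity into its four constituent requirements on $\widetilde{M}$. First, $\widetilde{M}$ has the left adjoint $\widehat{M}$ by the very construction of the kernel adjunction \eqref{keradj}. Second, $\widetilde{M}$ is conservative: since $\widetilde{N}$ is conservative and $\tilde{\theta}$ is invertible, hence a strong epimorphism, Proposition~\ref{Mirror}(a) applies. Third, reflexive coequalizers exist in the domain $[\CB,\CV]$, because $\CV$ is cocomplete and so $[\CB,\CV]$ has all colimits. Fourth, $\widetilde{M}$ preserves reflexive coequalizers by Proposition~\ref{Mirror}(c), using that $\widetilde{N}$ does and that $\tilde{\theta}$ is invertible. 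Assembling these four facts yields crude monadicity of $\widetilde{M}$.

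For part (b), I would argue that when $\widetilde{N}$ is an equivalence both the unit $\eta_M$ and the counit $\varepsilon_M$ of $\widehat{M}\dashv\widetilde{M}$ are invertible, whence $\widetilde{M}$ is an adjoint equivalence. An equivalence is fully faithful, so $\widetilde{N}$ is fully faithful; with $\tilde{\theta}$ and $\theta^{r}$ invertible, Proposition~\ref{Mirror}(e) makes $\widetilde{M}$ fully faithful, equivalently $\varepsilon_M$ invertible. Dually, the left adjoint $\widehat{N}$ of an equivalence is itself an equivalence and hence fully faithful, so Proposition~\ref{Mirror}(f) makes $\widehat{M}$ fully faithful, equivalently $\eta_M$ invertible. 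With both unit and counit invertible, $\widetilde{M}$ is an equivalence.

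There is no deep obstacle here; the corollary is an orchestration of Proposition~\ref{Mirror}. The one point requiring care is part (b): it does not suffice to show $\widetilde{M}$ fully faithful, since a fully faithful right adjoint need only embed its domain as a reflective subcategory. I must also establish that $\widehat{M}$ is fully faithful, and for that I must first note that the left adjoint $\widehat{N}$ inherits full faithfulness from $\widetilde{N}$ being an equivalence. A secondary point is confirming in part (a) that the ambient cocompleteness of $\CV$, assumed throughout the appendix and in Proposition~\ref{Cdmonadicity}, supplies the reflexive coequalizers in $[\CB,\CV]$ that crude monadicity demands of the domain, rather than attempting to transport them from the domain of $\widetilde{N}$.
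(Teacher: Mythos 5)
Your proposal is correct and follows exactly the route the paper intends: the corollary is stated immediately after Proposition~\ref{Mirror} with no separate proof, precisely because it is the orchestration of clauses (a), (c), (e), (f) that you carry out, using the equivalence of invertibility of $\theta^{\ell}$ and $\tilde{\theta}$ and the cocompleteness of $\CV$ for the reflexive coequalizers. Your added care in part (b) --- noting that full faithfulness of $\widetilde{M}$ alone is insufficient and that $\widehat{N}$ inherits full faithfulness so that Proposition~\ref{Mirror}(f) applies --- is exactly the right way to close the argument.
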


\section{Remarks on idempotents}\label{idemp}

Define a relation on any monoid $M$ by $a\sqsubseteq b$ when $ba = a$.
This relation is transitive; indeed, we have a stronger property.

\begin{proposition}\label{gentrans}
If $ub\sqsubseteq a$ and $vc \sqsubseteq b$ then $uvc\sqsubseteq a$.

If $u_ia_i\sqsubseteq a_{i-1}$ for $1\sqsubseteq i\sqsubseteq n$ then $u_1\dots u_na_n\sqsubseteq a_0$.
\end{proposition}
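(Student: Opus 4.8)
The plan is to establish the two assertions in turn, deriving the second from the first by a short induction. Since $x \sqsubseteq y$ means precisely $yx = x$, the whole argument reduces to manipulating products in the monoid $M$, so I would begin by translating every hypothesis and conclusion into an equation.

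For the first assertion, rewrite $ub \sqsubseteq a$ as $a\,ub = ub$ and $vc \sqsubseteq b$ as $b\,vc = vc$, and write the goal $uvc \sqsubseteq a$ as $a\,uvc = uvc$. The one idea needed is to insert a copy of $b$ between $u$ and $vc$, using $vc = b\,vc$, so that the factor $ub$ becomes visible. Then
\[
a\,uvc = a\,u\,(b\,vc) = (a\,ub)\,vc = (ub)\,vc = u\,(b\,vc) = u\,(vc) = uvc,
\]
where the third equality uses $a\,ub = ub$ and the fifth uses $b\,vc = vc$ once more; the remaining steps are just associativity. This yields $a\,uvc = uvc$, that is, $uvc \sqsubseteq a$.

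For the second assertion (with the index condition read as $1 \le i \le n$), I would prove by induction on $k$ that $u_1\dots u_k\,a_k \sqsubseteq a_0$ for every $k$ with $1 \le k \le n$; the case $k = n$ is the claim. The base case $k = 1$ is exactly the hypothesis $u_1 a_1 \sqsubseteq a_0$. For the step, apply the first assertion with $a = a_0$, $u = u_1\dots u_{k-1}$, $b = a_{k-1}$, $v = u_k$ and $c = a_k$: the inductive hypothesis gives $ub = u_1\dots u_{k-1}\,a_{k-1} \sqsubseteq a_0$, the hypothesis for index $k$ gives $vc = u_k a_k \sqsubseteq a_{k-1} = b$, and the conclusion $uvc = u_1\dots u_k\,a_k \sqsubseteq a_0$ completes the induction.

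Everything here is elementary, so I expect no genuine obstacle; the only place that repays a moment's care is the first calculation, where the inserted factor $b$ must be placed so that both $ub$ and $b\,vc$ surface at the right moments, and one must keep the associativity bookkeeping straight.
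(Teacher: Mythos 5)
Your proof is correct and follows essentially the same route as the paper's: the identity $auvc = aubvc = ubvc = uvc$ is exactly the paper's computation, and your explicit induction merely fills in what the paper dismisses with ``follows by induction.'' Nothing to change.
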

\begin{proof}
For the first sentence, the assumptions are $aub = ub$ and $bvc=vc$.
So $auvc = aubvc=ubvc=uvc$; that is, $uvc \sqsubseteq a$. 
The second sentence follows by induction.
\end{proof}

The relation is only reflexive for idempotents: clearly $a\sqsubseteq a$ is equivalent to $aa=a$.

The unit $1$ of the monoid is a largest element in the sense that $a\sqsubseteq 1$ for all $a\in M$.
That is, $1$ is the empty meet. 
However, not all meets need exist. A {\em meet} for $a,b\in M$ is an element $a\wedge b$
with $a\wedge b \sqsubseteq a$ and $a\wedge b \sqsubseteq b$, and, if $x\sqsubseteq a$ and $x\sqsubseteq b$ then
$x\sqsubseteq a\wedge b$. In particular, $a\wedge b$ must be an idempotent. 
Meets of lists of $n$ elements are defined in the obvious way and, for $n\ge 2$,
can be constructed from iterated binary meets when they exist. 

\begin{proposition}\label{someets}
If $ab\sqsubseteq b$ and $a\sqsubseteq a$ then $a\wedge b = ab$.

If $a_1, \dots , a_n$ are idempotents such that $a_ia_j\sqsubseteq a_j$ for $i\sqsubseteq j$
then $a_1\wedge \dots \wedge a_n = a_1 \dots a_n$. 
\end{proposition}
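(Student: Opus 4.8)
The plan is to prove the two sentences in turn, the first serving as the engine for the second.

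For the first sentence I would simply verify that $ab$ satisfies the three defining conditions for the meet of $a$ and $b$. Unfolding the definitions, the hypothesis $ab\sqsubseteq b$ reads $b(ab)=ab$, while $a\sqsubseteq a$ says that $a$ is idempotent. Then $ab\sqsubseteq a$ follows from $a(ab)=(aa)b=ab$, and $ab\sqsubseteq b$ is the hypothesis itself. For the universal property, if $x\sqsubseteq a$ and $x\sqsubseteq b$, so that $ax=x$ and $bx=x$, then $(ab)x=a(bx)=ax=x$, giving $x\sqsubseteq ab$. (Idempotency of $ab$, which any meet must satisfy, is then automatic from the universal property applied to $x=ab$, and in any case follows directly from $abab=a(bab)=a(ab)=ab$.) Thus $a\wedge b=ab$.

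For the second sentence I would argue by induction on $n$, the base case $n=2$ being the first sentence and $n=1$ being trivial. Writing $p=a_1\cdots a_{n-1}$, the inductive hypothesis identifies $p$ with the meet $a_1\wedge\dots\wedge a_{n-1}$; in particular $p$ is idempotent, so $p\sqsubseteq p$, and $p\sqsubseteq a_i$ for $i<n$. Since a meet of a list can be built from iterated binary meets, it suffices to show $p\wedge a_n=p\,a_n$, for then $a_1\wedge\dots\wedge a_n=(a_1\wedge\dots\wedge a_{n-1})\wedge a_n=p\,a_n=a_1\cdots a_n$. Applying the first sentence with $a=p$ and $b=a_n$, and using $p\sqsubseteq p$, this reduces to the single claim $p\,a_n\sqsubseteq a_n$.

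The main obstacle is precisely this claim $a_1\cdots a_{n-1}a_n\sqsubseteq a_n$, since the outer factors carry indices that are not directly comparable to $a_n$ through the product. The key observation I would isolate is that the set $I=\{x : a_n x=x\}$ is closed under left multiplication by each $a_i$ with $i<n$: if $a_n x=x$ then, using the hypothesis $a_i a_n\sqsubseteq a_n$ in the form $a_n a_i a_n=a_i a_n$, one computes $a_n(a_i x)=a_n a_i(a_n x)=(a_n a_i a_n)x=(a_i a_n)x=a_i(a_n x)=a_i x$, so $a_i x\in I$. Starting from $a_{n-1}a_n\in I$ (the hypothesis for the pair $i=n-1\le j=n$) and multiplying successively on the left by $a_{n-2},\dots,a_1$, each of which preserves $I$, yields $a_1\cdots a_{n-1}a_n\in I$, that is, $p\,a_n\sqsubseteq a_n$. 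This closes the induction and hence completes the proof. I expect the closure property of $I$ to be the only genuinely non-formal step; everything else is bookkeeping with the definition of $\sqsubseteq$.
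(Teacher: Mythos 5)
Your proof is correct and takes essentially the same route as the paper's: verify the meet axioms directly for the binary case, then induct, reducing everything to the claim $a_1\cdots a_{n-1}a_n\sqsubseteq a_n$. The only cosmetic difference is that your closure property of $I=\{x : a_nx=x\}$ under left multiplication by the $a_i$ is precisely the content of the paper's Proposition~\ref{gentrans} (proved there separately and cited at this point), which you re-derive inline.
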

\begin{proof}
For the first sentence, we are told that $ab\sqsubseteq b$, while $a\sqsubseteq a$ implies $aa=a$,
and so $aaab=ab$, yielding $ab\sqsubseteq a$.
For the second sentence the result is clear for $n=1$ since we suppose $a_1$
idempotent. Assume the result for $n-1$; so $a_1\wedge \dots \wedge a_{n-1} = a_1 \dots a_{n-1}$. Apply the second sentence of Proposition~\ref{gentrans} to the inequalities $a_ia_n\sqsubseteq a_n$ to deduce $a_1\dots a_{n-1}a_n\sqsubseteq a_n$.
So, by the first sentence, $a_1\dots a_n = a_1\dots a_{n-1}\wedge a_n = a_1\wedge \dots \wedge a_{n-1}\wedge a_n$, as required.
\end{proof}

Notice that, if $ab=ba$ and $b$ is idempotent, then $bab=abb=ab$, so $ab\sqsubseteq b$.
So the proposition applies to commuting idempotents.

Now suppose we have a ring $R$. We can apply our results to the multiplicative monoid of $R$. 
We say idempotents $e$ and $f$ in $R$ are {\em orthogonal}
when $ef=fe=0$. A list $e_0, e_1, \dots e_n$ of idempotents is {\em orthogonal}
when each pair in the list is orthogonal. The list is {\em complete} when
$e_0+ e_1+ \dots + e_n = 1$. An easy induction shows that a complete
list of idempotents is orthogonal if and only if $e_ie_j = 0$ for $i\le j$.
 
For each $a\in R$, put $\bar{a}=1-a$. Clearly if $a$ is idempotent, so is $\bar{a}$.

Let $R^{\circ}$ denote the ring obtained from $R$ by reversing multiplication.

\begin{proposition}
\begin{enumerate}
\item[(a)] $a\sqsubseteq b$ in $R$ if and only if $\bar{b}\sqsubseteq \bar{a}$ in $R^{\circ}$. 
\item[(b)] For $b$ an idempotent, 
$ab\sqsubseteq b$ in $R$ if and only if $\bar{a}\bar{b}\sqsubseteq \bar{b}$ in $R^{\circ}$.
\item[(c)] If $a$ and $b$ are idempotents and $ab\sqsubseteq b$ in $R$ then 
$e_0=ab, e_1=\bar{a}b, e_2=\bar{b}$
is a complete list of orthogonal idempotents. 
\end{enumerate}
\end{proposition}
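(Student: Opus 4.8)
The plan is to treat all three parts as short, essentially mechanical, ring computations, so the only real care goes into the bookkeeping for the opposite ring $R^{\circ}$; this is the one genuinely error-prone step rather than any computational difficulty. Recall $a\sqsubseteq b$ means $ba=a$, so when the relation is read \emph{in $R^{\circ}$} (whose product of $x$ and $y$ is the $R$-product $yx$), the condition ``$x\sqsubseteq y$ in $R^{\circ}$'' unwinds to $xy=x$ computed in $R$; likewise any product written in $R^{\circ}$, such as the $\bar a\bar b$ of part (b), must be read as the reversed $R$-product $\bar b\bar a$. Once these conventions are pinned down, part (a) needs no idempotency at all: I would expand $\bar b\,\bar a=(1-b)(1-a)=1-a-b+ba$ and observe that this equals $\bar b=1-b$ exactly when $ba=a$. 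Since ``$\bar b\sqsubseteq\bar a$ in $R^{\circ}$'' means $\bar b\,\bar a=\bar b$ in $R$, this is precisely $a\sqsubseteq b$ in $R$, giving the stated equivalence.

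For part (b), I would use that $b$ idempotent forces $\bar b$ idempotent. Reading both the relation and the product in $R^{\circ}$, the condition ``$\bar a\bar b\sqsubseteq\bar b$ in $R^{\circ}$'' becomes $\bar b\,\bar a\,\bar b=\bar b\,\bar a$ in $R$. Expanding $(1-b)(1-a)(1-b)$ with $b^{2}=b$ gives $1-a-b+ab+ba-bab$, whereas $\bar b\,\bar a=1-a-b+ba$, so the two agree iff $ab=bab$. As $ab\sqsubseteq b$ means $b(ab)=ab$, i.e.\ $bab=ab$, the two conditions coincide; note that only the idempotency of $b$ is used, matching the hypothesis. (It is worth flagging that reading $\bar a\bar b$ in $R$ instead would make the statement vacuously true, so the opposite-ring reading is forced.)

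For part (c), I would first record the consequence $bab=ab$ of $ab\sqsubseteq b$, together with $a^{2}=a$ and $b^{2}=b$. Completeness is the telescoping identity $e_0+e_1+e_2=ab+(b-ab)+(1-b)=1$. Each $e_i$ is idempotent by a one-line check: $\bar b$ as above, $(ab)^{2}=a(bab)=a(ab)=ab$, and $(b-ab)^{2}=b-ab$ using the same relations. Finally I would verify orthogonality, with all six cross products collapsing under $a^{2}=a$, $b^{2}=b$ and $bab=ab$ (for instance $e_2e_0=(1-b)ab=ab-bab=0$); alternatively, invoking the remark preceding the proposition, it suffices to check $e_ie_j=0$ for $i<j$ and let completeness supply the reversed products. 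I do not expect any part to present a serious obstacle: the substantive point is simply that the single relation $bab=ab$ is exactly what makes $ab$, $\bar a b$ and $\bar b$ idempotent and mutually orthogonal, and the only thing to guard against is an ordering slip when transporting statements to $R^{\circ}$.
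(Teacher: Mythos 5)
Your proposal is correct and follows essentially the same route as the paper: the same unwinding of the $R^{\circ}$ conventions reduces (a) to $ba=a$ and (b) to $bab=ab$, and (c) rests on the same identities $a^{2}=a$, $b^{2}=b$, $bab=ab$. The only cosmetic difference is that the paper obtains idempotency of $e_1$ indirectly, from $e_0+e_2=\overline{e_1}$ being a sum of orthogonal idempotents, whereas you verify $(\bar{a}b)^{2}=\bar{a}b$ directly.
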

\begin{proof}
\begin{enumerate}
\item[(a)] $\bar{b}\sqsubseteq \bar{a}$ in $R^{\circ}$ means $(1-b)(1-a) = 1-b$ in $R$; 
that is, $ba=a$ which means $a\sqsubseteq b$ in $R$. 
\item[(b)] $\bar{a}\bar{b}\sqsubseteq \bar{b}$ in $R^{\circ}$ means 
$\bar{b}\bar{a}\bar{b}= \bar{b}\bar{a}$ in $R$. That is, 
$(1-b)(1-a)(1-b) = (1-b)(1-a)$.
That is, $1-a-b+ab-b+ba+bb-bab = 1-b-a+ba$.
That is, $bab = ab$, which is $ab\sqsubseteq b$ in $R$. 
\item[(c)] We already know $e_0$ and $e_2$ are idempotent.
They are also orthogonal: $e_0e_2 = ab(1-b)=ab-ab=0$
and $e_2e_0=(1-b)ab=ab-bab=0$. 
Therefore $e_0+e_2 = ab + 1 - b = 1-(1-a)b = \overline{e_1}$
is idempotent. So $e_1$ is idempotent and $e_0+e_1+e_2=1$.
The calculations $e_0e_1=ab(1-a)b=ab-abab=0$ and 
$e_1e_2=(1-a)b(1-b)=b-ab-b+abb=0$ complete the proof.
\end{enumerate}
\end{proof}

We can extend part (c) inductively to obtain: 

\begin{proposition}\label{weakcommute}
Suppose $a_1, \dots , a_n$ are idempotents such that $a_ia_j\sqsubseteq a_j$ for $i\le j$.
Then $e_i = \overline{a_i}a_{i+1}a_{i+2} \dots a_{n}$ for $0\le i \le n$
(in particular, $e_0 = a_1a_2\dots a_n$ and $e_n = \overline{a_n}$)
defines a complete list of orthogonal idempotents. 
\end{proposition}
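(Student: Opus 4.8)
The plan is to argue by induction on $n$, peeling off the first idempotent $a_1$ and invoking part~(c) of the preceding proposition, which splits a single idempotent into two using precisely a relation of the form $ab\sqsubseteq b$. The base case $n=1$ is the familiar splitting into $a_1$ and $\overline{a_1}$, which is evidently a complete orthogonal pair.

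For the inductive step I would first apply the induction hypothesis to the shorter list $a_2,\dots,a_n$, whose members still satisfy $a_ia_j\sqsubseteq a_j$ for $2\le i\le j$. This produces a complete orthogonal list of idempotents $e'_0,\dots,e'_{n-1}$ with $e'_0=b:=a_2\cdots a_n$ and $e'_i=e_{i+1}$ for $1\le i\le n-1$. By Proposition~\ref{someets} we have $b=a_2\wedge\dots\wedge a_n$, so $b$ is idempotent and $a_jb=b$ for every $2\le j\le n$. It then remains to split the single idempotent $e'_0=b$ as $e_0=a_1b$ and $e_1=\overline{a_1}b$: granting the hypothesis $a_1b\sqsubseteq b$, part~(c) tells us that $e_0,e_1,\overline{b}$ is complete and orthogonal, whence $e_0,e_1$ are orthogonal idempotents with $e_0+e_1=b=e'_0$.

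The crux, and the step I expect to be the main obstacle, is the verification that $a_1b\sqsubseteq b$. Here I would exploit the meet characterization: by the universal property of $b=a_2\wedge\dots\wedge a_n$, it suffices to prove $a_1b\sqsubseteq a_j$, that is $a_ja_1b=a_1b$, for each $2\le j\le n$. This follows by combining $a_jb=b$ with the local commutation relation $a_ja_1a_j=a_1a_j$ (which is exactly $a_1a_j\sqsubseteq a_j$):
$$a_ja_1b=a_ja_1(a_jb)=(a_ja_1a_j)b=(a_1a_j)b=a_1(a_jb)=a_1b.$$

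Finally I would assemble the full list $e_0,\dots,e_n$. Orthogonality among $e_2,\dots,e_n$ is the induction hypothesis, and orthogonality of $e_0,e_1$ comes from part~(c). For the cross terms, note that $e_0=e_0e'_0=e'_0e_0$ and $e_1=e_1e'_0=e'_0e_1$ because $e_0+e_1=e'_0$ and $e_0,e_1$ are orthogonal; hence each of $e_0,e_1$ annihilates every $e'_j=e_{j+1}$ with $j\ge1$ on both sides, using $e'_0e'_j=e'_je'_0=0$. Completeness is then immediate, $\sum_{i=0}^n e_i=(e_0+e_1)+\sum_{i=2}^n e_i=\sum_{j=0}^{n-1}e'_j=1$. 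I anticipate that only the key lemma $a_1b\sqsubseteq b$ needs genuine work, the remainder being routine bookkeeping with a complete orthogonal family.
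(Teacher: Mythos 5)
Your proof is correct and is precisely the inductive extension of part (c) that the paper merely gestures at (``We can extend part (c) inductively to obtain:'') without writing out. The one genuinely non-routine step --- establishing $a_1b\sqsubseteq b$ for $b=a_2\cdots a_n$ by checking $a_ja_1b=a_ja_1a_jb=a_1a_jb=a_1b$ for each $j$ and invoking the universal property of the meet from Proposition~\ref{someets} --- is exactly the verification the paper leaves implicit, and you have it right.
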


Suppose $\CX$ is an additive category in which idempotents split.
Our results apply to the endomorphism monoid $\CX (A,A)$ of
each object $A\in \CX$. If $a$ is an idempotent on $A$, we have
a splitting:
\begin{equation*}
\xymatrix{
A \ar[rr]^-{a} \ar[d]_-{r_a} && A \ar[d]^-{r_a} \\
aA \ar[rr]_-{1_{aA}} \ar[rru]_-{i_{a}} && aA \ .}
\end{equation*} 
Yet, we also have a splitting for $\bar{a} = 1-a$ which incidentally provides a kernel
$\bar{a}A$ for $a$ and so a direct sum decomposition of $A$:
$$A \cong \bar{a}A\oplus aA \ .$$

More generally, for any complete list $e_0, e_1, \dots e_n$ of orthogonal idempotents
in $\CX (A,A)$, we obtain a direct sum decomposition
$$A\cong e_0A \oplus e_1A \oplus \dots \oplus e_nA \ .$$

\end{document}